
\documentclass[12pt]{amsart}
\usepackage{amssymb,latexsym}
\usepackage{enumerate}

\makeatletter
\@namedef{subjclassname@2010}{%
  \textup{2010} Mathematics Subject Classification}
\makeatother

\usepackage{mathrsfs}

\newtheorem{thm}{Theorem}[section]
\newtheorem{cor}[thm]{Corollary}
\newtheorem{lem}[thm]{Lemma}

\theoremstyle{definition}
\newtheorem{defin}[thm]{Definition}
\newtheorem{rem}[thm]{Remark}

\parindent=16pt
\oddsidemargin=0.1cm \evensidemargin=0.1cm \topmargin=-1cm


\textwidth=17cm \textheight=23cm
\newtheorem{cl}{Claim}
\newcommand{\ndc}{nowhere dense closed }

\begin{document}
\baselineskip=17pt

\title[$F_{\sigma \delta}$- and
$G_{\delta \sigma}$-spaces]{Non-separable $h$-homogeneous absolute
$F_{\sigma \delta}$-spaces and $G_{\delta \sigma}$-spaces}

\author[S. V. Medvedev]{Sergey Medvedev}
\address{Faculty of Mechanics and Mathematics, South Ural State
University, pr. Lenina, 76,  454080 Chelyabinsk, Russia}
\email{medv@math.susu.ac.ru}

\date{}

\begin{abstract}
Denote by $Q(k)$ a $\sigma$-discrete metric weight-homogeneous
space of weight $k$. We give an internal description of the space
$Q(k)^\omega$. We prove that the Baire space $B(k)$ is densely
homogeneous with respect to $Q(k)^\omega$ if $k > \omega$.
Properties of some non-separable $h$-homogeneous absolute
$F_{\sigma \delta}$-sets and $G_{\delta \sigma}$-sets are
investigated.
\end{abstract}

\subjclass[2010]{Primary 54H05, 54F65; Secondary 54E52, 03E15}

\keywords{$h$-homogeneous space, set of first category, $F_{\sigma
\delta}$-set, $G_{\delta \sigma}$-set}

\maketitle

\section*{Introduction}

All topological spaces under discussion are metrizable and
strongly zero-dimensional (i.e., Ind$X = 0$).

The aim of this paper is to characterize the space $Q(k)^\omega$
for $k > \omega$ (see Theorem~\ref{t-qR}). Combining this result
with general theorems about $h$-homogeneous spaces, we describe
some non-separable $h$-homogeneous absolute $F_{\sigma
\delta}$-sets and $G_{\delta \sigma}$-sets. We deal mainly with
the non-separable case, because the similar theorems for separable
spaces are already obtained.

About one hundred years ago, an arithmetical example of a strictly
$F_{\sigma \delta}$-subset of the space $\mathscr{N}$ of
irrationals was constructed by Baire~\cite{Bai}. Let us recall his
method. For every $n$-tuple $t \in \omega^n$, where $n \in
\omega$, take a perfect subset $X_t$ of $\mathscr{N}$. Suppose,
for all $t \in \omega^n$ and $i \in \omega$, $X_{t\hat{\,}i}$ is a
\ndc subset of $X_t$ and the union $ \cup \{X_{t\hat{\,}i}: i \in
\omega \}$ is dense in $X_t $. Put $E_n = \cup \{X_{t}: t \in
\omega^n \}$. Then the intersection $E = \cap \{E_n: n \in \omega
\}$ is a strictly $F_{\sigma \delta}$-subset of $\mathscr{N}$
(see~\cite{Bai}). The set $E$, which is obtained in such a way, is
called  \textit{a canonical element of class} 3. This notion was
introduced by Lusin. Keldysh (see~\cite{Ke34}, \cite{Ke44}) showed
that any two canonical elements of class 3 are homeomorphic.
Clearly, every canonical element of class 3 is a space of first
category.

Sierpinski observed that the property of being an absolute
$F_{\sigma \delta}$-set is internal. Consider a separable space
$X$. Let $\{X_t: t \in \omega^n, n \in \omega \}$ be a family
consisting of closed subsets of $X$ such that $X = \cup \{X_i: i
\in \omega \}$ and every $X_t = \cup \{X_{t\hat{\,}i}: i \in
\omega \}$. Suppose, for every $\chi \in \omega^\omega$ and an
arbitrary point $x_n \in X_{\chi {\upharpoonright}n}$, the
sequence $\{x_n: n \in \omega \}$ converges to a point $x \in X$.
Sierpinski~\cite{Sier} proved that under these conditions the
space $X$ is an absolute $F_{\sigma \delta}$-set. In a sense, the
Sierpinski result is a converse of the Baire theorem.

Using the Sierpinski method, van Engelen~\cite{EngQ} obtained a
topological characterization of the space $Q^\omega$, where $Q$ is
the space of rationals. Namely, he showed that every separable
absolute $F_{\sigma \delta}$-set of first category, which is
nowhere absolute $G_{\delta \sigma}$, is homeomorphic to
$Q^\omega$. This implies that $Q^\omega$ is a canonical element of
class 3. Moreover, van Engelen~\cite{EngQ} proved that the Cantor
set $\mathscr{C}$ is densely homogeneous with respect to
$Q^\omega$. In~\cite{En-tQ} van Engelen described all homogeneous
Borel sets which are either an $F_{\sigma \delta}$-subsets or a
$G_{\delta \sigma}$-subsets of the Cantor set $\mathscr{C}$, but
not both (see also~\cite{En-coun} and~\cite{FE}).

In the paper we use (and modify to the non-separable case) the
above theorems and a technique from Saint Raymond~\cite{Sai}, van
Mill~\cite{vm}, van Engelen~\cite{EngQ}, \cite{En-tQ}, and
Ostrovsky~\cite{Ostr}.

The paper is organized as follows: in Section 1 we prove some
lemmas to obtain a \textit{good} representation of an absolute
$F_{\sigma \delta}$-set (see Lemma~\ref{ad}). In Section 2 we give
a description of the space $Q(k)^\omega$ for $k > \omega$ (see
Theorem~\ref{t-qR}). In Section 3 we investigate the products
$Q(\tau) \times Q(k)^\omega$ and their dense complements in the
Baire space $B(\tau)$, where $ \omega \leq k \leq \tau$.

\section{Notation and some lemmas}

For all undefined terms and notation see~\cite{Eng}.

$X \approx Y$ means that $X$ and $Y$ are homeomorphic spaces. A
\emph{clopen} set is a set which is both closed and open. A
strongly zero-dimensional space $X$ is called
\textit{$h$-homogeneous} (or \emph{strongly homogeneous},
see~\cite{vm}, \cite{En-B2}) if every nonempty clopen subset of
$X$ is homeomorphic to $X$. Every $h$-homogeneous space is
homogeneous, but the converse does not hold. A space $X$ is called
\textit{weight-homogeneous} if for every nonempty open subset $U
\subseteq X$ we have $w(U) =w(X)$. For a cardinal $k$ put
$\mathscr{E}_k = \{X: \, X$ is a weight-homogeneous space of
weight $k$ and  $\mathrm{Ind}X= 0 \}$. Clearly, every
$h$-homogeneous space $X \in \mathscr{E}_k$ for $k = w(X)$.

Let $\mathscr{P}$ be a topological property.  A space $X$ is
\textit{nowhere} $\mathscr{P}$ if no nonempty open subset of $X$
has property $\mathscr{P}$. Let $\mathscr{P}_1$ and
$\mathscr{P}_2$ be topological properties; we write $X \in
\mathscr{P}_1 + \mathscr{P}_2$ if $X = A \cup B$, where $A$ has
property $\mathscr{P}_1$ and $B$ has  property $\mathscr{P}_2$.

Let $\mathscr{U}$ be a family of subsets of a metric space $(X,
\varrho)$. Put $\cup \mathscr{U} = \cup \{U: U \in \mathscr{U} \}$
and mesh$\,\mathscr{U}$ = sup\{diam($U$): $U \in \mathscr{U} \}$.
Denote by $[\mathscr{U}]$ the family $\{\mbox{cl}_X U: U \in
\mathscr{U}\}$. If $f:X\rightarrow Y$ is a mapping, then we write
$f(\mathscr{U}) = \{f(U): U \in \mathscr{U}\}$.

Let $k \geq \omega$. In~\cite{mq} we proposed to consider the
space $Q(k)$ as a non-separable analogue of weight $k$ for the
space $Q$ of rational numbers. By definition, $Q(k)$ is a
$\sigma$-product of $\omega$ copies of the discrete space $D$ of
cardinality $k$ with a basic point $(0,0,\ldots) \in k^\omega$,
i.e.,
$$Q(k) = \{(x_0, x_1, \ldots) \in k^\omega: \exists n \forall m
(m\geq n \Rightarrow x_m = 0 )\}.$$ Clearly, $Q(\omega) \approx
Q$. A topological description of the space $Q(k)$ is given by the
following theorem (see~\cite{mq}).

\begin{thm}\label{tq}
Let $X$ be a $\sigma$-discrete metric space of weight $k$ that is
homogeneous with respect to weight. Then $X$ is homeomorphic to
$Q(k)$.
\end{thm}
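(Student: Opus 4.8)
The plan is to build a homeomorphism $f\colon X \to Q(k)$ by matching a tree of clopen partitions of $X$ with the canonical partition tree of $Q(k)$, spending the $\sigma$-discreteness exactly to make the induced correspondence a bijection. First I would record the elementary consequences of the hypotheses. Since $X$ is metrizable with $\mathrm{Ind}\,X=0$, it has a base of clopen sets; weight-homogeneity together with $k\ge\omega$ rules out isolated points (a nonempty finite open set would have finite weight), and then $\sigma$-discreteness forces each discrete layer to be nowhere dense, so a decomposition $X=\bigcup_n X_n$ exhibits $X$ as meager in itself. I would also note the crucial stability: every nonempty clopen $U\subseteq X$ is again metrizable, strongly zero-dimensional, $\sigma$-discrete, and weight-homogeneous of weight $k$, since a nonempty open subset of $U$ is open in $X$ and hence of weight $k$. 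On the target side, recall that $Q(k)\subseteq k^\omega$ carries the canonical tree $\{[s]:s\in k^{<\omega}\}$ with $[s]=\bigsqcup_{i\in k}[s\hat{\,}i]$, that its points are exactly the branches of $k^{<\omega}$ that are eventually $0$, and that each layer $F_n$ of sequences vanishing after coordinate $n$ is closed and discrete.

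The first technical step is a splitting lemma: every nonempty clopen $U\subseteq X$ and every $\varepsilon>0$ admit a partition of $U$ into \emph{exactly} $k$ nonempty clopen sets of diameter $<\varepsilon$. Here I would use that in a metrizable space cellularity equals weight, so $w(U)=k$ yields a pairwise disjoint family of $k$ nonempty open sets, which by $\mathrm{Ind}=0$ may be taken clopen; intersecting these with any clopen partition of mesh $<\varepsilon$ (which exists and, being a disjoint clopen family, has at most $k$ members) and collecting the leftover into one of the pieces produces exactly $k$ nonempty clopen pieces of diameter $<\varepsilon$. This lemma, proved once, is applied uniformly at every node of the tree, and weight-homogeneity is what guarantees that the full $k$ children are available at each node so as to mirror $[s]=\bigsqcup_i[s\hat{\,}i]$.

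The core of the argument is an inductive back-and-forth construction, carried out in $\omega$ rounds, of a tree $\{U_s:s\in k^{<\omega}\}$ of nonempty clopen subsets of $X$ together with the order-isomorphism $U_s\leftrightarrow[s]$. At round $n$ I refine every level-$n$ set $U_s$ into $k$ nonempty clopen sets $\{U_{s\hat{\,}i}\}$ of diameter $<2^{-n}$ by the splitting lemma, keeping the branching pattern identical to that of $[s]$. The back-and-forth is used to synchronize \emph{points} rather than branches: listing $X=\bigcup_n X_n$ and $Q(k)=\bigcup_n F_n$ by their discrete layers, at round $n$ I arrange the refinement so that (a) once the mesh is small enough, each still-unisolated point of the closed discrete set $X_n$ becomes the sole point of $X$ lying on the all-zero extension $s\hat{\,}0\hat{\,}0\hat{\,}\cdots$ of its piece, so that branch is declared convergent and eventually-zero, and (b) symmetrically each point of $F_n$ is assigned a branch of the $X$-tree. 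Because every $X_n$ is closed and discrete, its points are separated inside any clopen set of sufficiently small diameter, so this assignment is consistent across the $k$-fold splittings.

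Finally I would define $f$ by sending the point of $X$ on an eventually-zero branch $b$ to the point of $Q(k)$ on the same branch; vanishing mesh makes $\{U_s\}$ and $\{[s]\}$ bases, so $f$ is a continuous open bijection, hence a homeomorphism. The main obstacle — and the place where all the hypotheses are consumed — is precisely the \emph{bijectivity} in this non-separable, meager setting: unlike the complete space $k^\omega$, a decreasing sequence of clopen sets of vanishing diameter need not meet $X$, so I must ensure that the convergent branches of the $X$-tree are exactly the eventually-zero ones and that each point of $X$ lies on exactly one of them, with no point skipped and no spurious limit created. This is exactly what the $\sigma$-discrete decomposition delivers, by capturing the layer $X_n$ at round $n$ and thereby making each point of $X$ the terminal point of a unique eventually-zero branch, while weight-homogeneity supplies the full $k$ children at every node needed to match $Q(k)$. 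Checking that these two requirements can be met simultaneously and without conflict, round by round, is the delicate heart of the proof; continuity and openness of $f$ are then routine from the vanishing-mesh clopen structure.
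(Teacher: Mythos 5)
The paper does not prove Theorem~\ref{tq}: it is quoted from \cite{mq} with no argument given, so there is nothing in-paper to compare against. Your overall strategy --- a tree of clopen partitions of $X$ mirroring the canonical tree $[s]=\bigsqcup_{i\in k}[s\hat{\,}i]$ of $Q(k)$, with a persistent ``all-zero'' distinguished point in every node and $\sigma$-discreteness used to force each point of $X$ onto an eventually-zero branch --- is the natural one and does yield the theorem. However, two steps are supported by claims that are false as stated and need repair. The splitting lemma: ``cellularity equals weight'' gives $c(U)=k$ only as a supremum, so when $\mathrm{cf}(k)=\omega$ (say $k=\aleph_\omega$) there need not exist a single pairwise disjoint family of $k$ nonempty open sets; and even given one, a disjoint family of $k$ clopen sets need not have clopen union, so ``collecting the leftover into one piece'' does not produce a clopen partition. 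The repair is to first split $U$ into countably many nonempty clopen pieces $P_j$ (possible because $U$, being $\sigma$-discrete, weight-homogeneous and without isolated points, is meager in itself, hence not compact), then place inside each $P_j$ a \emph{discrete} clopen family of $\lambda_j$ nonempty sets with $\sup_j\lambda_j=k$, extracted from maximal $1/m$-separated sets; a discrete family is pairwise disjoint with clopen union, and summing over $j$ gives exactly $k$ small pieces.

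The capturing step has a similar flaw: a closed discrete set $X_n$ has no uniform separation radius, so it is not true that ``once the mesh is small enough'' every piece meets $X_n$ in at most one point; and two points of $X_n$ lying in the same node $U_s$ cannot both be declared the sole point on the all-zero extension of that node at round $n$. Instead, at round $n$ refine each $U_s$ along a discrete clopen expansion of $(X_n\cap U_s)\cup\{q_s\}$ (collectionwise normality of metric spaces together with $\mathrm{Ind}=0$), where $q_s$ is the distinguished point already committed to $U_s$; label the child containing $q_s$ by $0$ and make each other point of $X_n\cap U_s$ the distinguished point of its own child. If a distinguished point is chosen in \emph{every} node and inherited by the $0$-labelled child, then every eventually-zero branch automatically has a (unique) point of $X$ on it, so your step (b) requires no separate back-and-forth. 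With these two repairs the argument is complete.
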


\begin{cor}\label{c-hQk}
The space $Q(k)$ is $h$-homogeneous for every cardinal $k$.
\end{cor}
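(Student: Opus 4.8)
The plan is to deduce the corollary directly from the internal characterization in Theorem~\ref{tq}. Recall that $Q(k)$ is $h$-homogeneous precisely when every nonempty clopen subset $U \subseteq Q(k)$ satisfies $U \approx Q(k)$. So I fix an arbitrary nonempty clopen $U \subseteq Q(k)$ and aim to verify that $U$ meets the hypotheses of Theorem~\ref{tq}; namely, that $U$ is a $\sigma$-discrete metric space of weight $k$ which is weight-homogeneous. Once these are checked, Theorem~\ref{tq} yields $U \approx Q(k)$ with no need to exhibit a homeomorphism by hand. This is exactly why the internal characterization is the right tool: a general clopen $U$ need not be one of the simple coordinate-determined clopen sets, so a direct construction would be awkward, whereas the abstract properties are easy to confirm.

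Two of the required properties are essentially free, being hereditary. As a subspace of the metrizable space $Q(k)$, the set $U$ is metrizable. And $\sigma$-discreteness passes to arbitrary subspaces: if $Q(k) = \bigcup_n D_n$ with each $D_n$ discrete, then $U = \bigcup_n (U \cap D_n)$, and each $U \cap D_n$ is discrete in $U$. Thus $U$ is a $\sigma$-discrete metric space with no real work.

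The substantive step is to establish $w(U) = k$ together with weight-homogeneity of $U$, and here I would exploit the self-similarity of the $\sigma$-product. For any finite partial assignment, the basic clopen set $B = \{x \in Q(k): x_{i_1} = a_1, \ldots, x_{i_n} = a_n\}$ is itself homeomorphic to $Q(k)$, since fixing finitely many coordinates of the $\sigma$-product leaves a $\sigma$-product of $\omega$ copies of $D$ over the remaining (still infinitely many) coordinates. In particular $w(B) = w(Q(k)) = k$, where $w(Q(k)) = k$ is read off from the standard basis of coordinate-determined sets. Now every nonempty open $V \subseteq U$ is also open in $Q(k)$, hence contains such a $B$; since weight is monotone on subspaces, $k = w(B) \le w(V) \le w(Q(k)) = k$, forcing $w(V) = k$. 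Taking $V = U$ gives $w(U) = k$, and letting $V$ range over all nonempty open subsets of $U$ gives weight-homogeneity of $U$.

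With all hypotheses verified, Theorem~\ref{tq} gives $U \approx Q(k)$, and since $U$ was an arbitrary nonempty clopen subset, $Q(k)$ is $h$-homogeneous. The one place demanding care is the weight-homogeneity step, that is, ruling out open subsets of weight below $k$; but this rests entirely on the single observation that each coordinate-determined clopen set is a copy of $Q(k)$, which is immediate from the product structure. I therefore expect the argument to be short, with Theorem~\ref{tq} doing the real work.
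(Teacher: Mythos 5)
Your proof is correct and is exactly the intended derivation: the paper gives no explicit argument for this corollary, presenting it as an immediate consequence of Theorem~\ref{tq}, and your verification that every nonempty clopen subset of $Q(k)$ is a $\sigma$-discrete, weight-homogeneous metric space of weight $k$ (via the observation that every nonempty open set contains a coordinate-determined clopen copy of $Q(k)$) is precisely the route the paper leaves implicit.
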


\begin{lem}~\cite{os1}\label{eh}
Let $A_i$ be a \ndc subset of a metric strongly zero-dimensional
space $(X_i, \varrho_i)$, $i \in \{1, 2\}$. Let $X_1 \setminus
A_1$ and $X_2 \setminus A_2$ be homeomorphic $h$-homogeneous
spaces. If $f_0 : A_1 \rightarrow A_2$ is a homeomorphism, then
there exists a homeomorphism $f:X_1 \rightarrow X_2$ such that the
restriction $f{|}A_1 = f_0$.
\end{lem}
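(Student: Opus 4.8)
The plan is to build $f$ by a back-and-forth construction on nested clopen partitions, using $h$-homogeneity to guarantee that the pieces lying away from $A_1$ and $A_2$ can always be matched. Write $Y_i = X_i \setminus A_i$; by hypothesis $Y_1 \approx Y_2$, and both are $h$-homogeneous, so every nonempty clopen subset of $Y_1$ is homeomorphic to every nonempty clopen subset of $Y_2$ (each being homeomorphic to $Y_1 \approx Y_2$). This is the only place the topology of the complements enters, and it gives complete freedom in matching clopen ``scraps.'' Fix bounded compatible metrics $\varrho_i \le 1$.

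By induction on $n$ I produce clopen partitions $\mathcal{U}_n$ of $X_1$ and $\mathcal{W}_n$ of $X_2$ together with a bijection $\phi_n : \mathcal{U}_n \to \mathcal{W}_n$, all of whose members have diameter $< 2^{-n}$, such that $\mathcal{U}_{n+1}$ refines $\mathcal{U}_n$, $\mathcal{W}_{n+1}$ refines $\mathcal{W}_n$, and $\phi_{n+1}$ refines $\phi_n$, and such that the \emph{trace condition} holds: for every $U \in \mathcal{U}_n$ meeting $A_1$ one has $\phi_n(U) \cap A_2 = f_0(U \cap A_1)$. I call a member \emph{active} if it meets $A_i$ and \emph{retired} otherwise; a retired member is never subdivided again and is matched, at the moment of retirement, to its $\phi$-image by a fixed homeomorphism (available by $h$-homogeneity, both being clopen subsets of the complements). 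To pass from stage $n$ to $n+1$ I refine each active pair $(U,\phi_n(U))$: choose a clopen partition of the closed set $U \cap A_1$ into sets of diameter $< 2^{-(n+1)}$ and, using strong zero-dimensionality (extension of clopen partitions of a closed subspace), extend it to a clopen partition of $U$; transport the chosen partition of $U \cap A_1$ by $f_0$ to a clopen partition of $\phi_n(U) \cap A_2$ and likewise extend it inside $\phi_n(U)$. The active sub-pieces on the two sides then correspond via $f_0$ on their traces, while the remaining clopen scraps (missing $A_1$, resp.\ $A_2$) are cut into sets of diameter $< 2^{-(n+1)}$ and matched by an arbitrary bijection realized by homeomorphisms. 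Retired pairs are carried over unchanged.

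I then define $f \,{|}\, A_1 = f_0$, and for $x \in Y_1$ I set $f(x) = h_{U^*}(x)$, where $U^*$ is the retired member containing $x$ (it exists, since once $2^{-n} < \varrho_1(x, A_1)$ the stage-$n$ piece at $x$ misses $A_1$ and is retired) and $h_{U^*}$ is its retirement homeomorphism. As the $\phi_n$ are refining bijections, $f$ is a bijection of $X_1$ onto $X_2$. On $Y_1$ it is locally a homeomorphism, hence continuous there; and at a point $a \in A_1$, for each $n$ the member $U_n \ni a$ of $\mathcal{U}_n$ is a clopen neighborhood of $a$, every $x \in U_n$ satisfies $f(x) \in \phi_n(U_n)$ (by nesting of the $\phi_n$ for retired $x$, and by the trace condition for $x \in A_1$), and $f(a) = f_0(a) \in \phi_n(U_n)$; thus $f(U_n) \subseteq \phi_n(U_n)$ has diameter $< 2^{-n}$, which yields continuity at $a$. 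The construction being symmetric in the two spaces (replacing $f_0$ by $f_0^{-1}$), the same argument gives continuity of $f^{-1}$, so $f$ is the desired homeomorphism with $f \,{|}\, A_1 = f_0$.

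The main obstacle is the bookkeeping in the inductive step: one must simultaneously keep $\phi_n$ a bijection, preserve the $f_0$-trace condition under refinement, and cut the non-$A$ scraps so that after countably many stages every point of $Y_1$ and of $Y_2$ has been retired (this forces active pieces always to meet $A_i$ and the meshes to shrink to $0$). The one genuinely topological input beyond $h$-homogeneity is that a clopen partition of the closed subspace $U \cap A_1$ extends to a clopen partition of $U$, which is precisely strong zero-dimensionality; everything else is a matching argument made possible by the $h$-homogeneity of the complements.
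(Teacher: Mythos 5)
The paper does not prove this lemma---it is quoted from Ostrovsky~\cite{os1}---so there is no internal proof to compare against; your Knaster--Reichbach-style scheme of nested clopen partitions with retirement of pieces that miss $A_i$ is indeed the standard route to this statement (it is the same technique as in van Mill's and Ostrovsky's papers). However, as written there is a genuine gap in the inductive step. You require that \emph{both} partitions $\mathcal{U}_n$ and $\mathcal{W}_n$ have mesh $<2^{-n}$, and you need this on the $X_2$ side twice: first to conclude that every point of $Y_2$ eventually lies in a retired image (otherwise $f$ need not be onto $Y_2$), and second for the continuity of $f^{-1}$ at points of $A_2$. But your refinement step produces the side-2 partition by taking a $\varrho_1$-fine clopen partition $\{C_\lambda\}$ of $U\cap A_1$, transporting it by $f_0$, and extending inside $\phi_n(U)$. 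Since $f_0$ is merely a homeomorphism, nothing bounds $\mathrm{diam}_{\varrho_2}\bigl(f_0(C_\lambda)\bigr)$, and the extended piece $W_\lambda\supseteq f_0(C_\lambda)$ can violate the mesh requirement. The appeal to ``the construction being symmetric'' does not hold: the construction as described is driven entirely from side 1. The standard repair is to partition $U\cap A_1$ into pieces that are small with respect to the metric $d(a,a')=\varrho_1(a,a')+\varrho_2(f_0(a),f_0(a'))$ (compatible with the topology of $A_1$ because $f_0$ is a homeomorphism), or to genuinely alternate sides, pulling back $\varrho_2$-fine partitions of $A_2$ by $f_0^{-1}$ at odd stages.

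A secondary point you dismiss as bookkeeping is in fact where the hypothesis that $A_i$ is \emph{nowhere dense} enters, and it deserves a sentence: after extending the trace partitions, the two sides of an active pair may produce different numbers of scraps (possibly none on one side). You must be able to carve a nonempty clopen scrap of prescribed small diameter out of any active piece on either side---this is exactly nowhere density of $A_1$ and $A_2$ plus $\mathrm{Ind}=0$---and then equalize cardinalities by further splitting scraps, using that every nonempty clopen subset of $Y_i$ is homeomorphic to $Y_i$ and that $Y_i$ (dense in $X_i$, hence non-compact when $A_i\neq\emptyset$) admits clopen partitions of the needed sizes. Without this the bijection $\phi_{n+1}$ cannot be defined. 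With these two repairs the argument goes through and yields the lemma.
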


\begin{lem}\label{q2}
Let $A_i$ be a \ndc subset of an $h$-homogeneous space $(X_i,
\varrho_i)$, $i \in \{1, 2\}$. Let $g: X_1 \rightarrow X_2 $ be a
homeomorphism such that $g(A_1) = A_2$. Then for a homeomorphism
$f_0: A_1 \rightarrow A_2 $  and $ \varepsilon > 0$ satisfying
$\varrho_2(g{|}A_1, f_0) < \varepsilon$  there exists a
homeomorphism $f:X \rightarrow X$ such that the restriction
$f{|}A_1 = f_0$ and $\varrho_2(g, f) < \varepsilon$.
\end{lem}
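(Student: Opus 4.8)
The plan is to reduce the assertion to the special case in which the two spaces and their distinguished subsets coincide and $g$ is the identity, and then to extend an almost-identical self-homeomorphism of the boundary by a clopen collar-and-shell construction resting on $h$-homogeneity. For the reduction, put $\phi=f_0\circ(g{|}A_1)^{-1}\colon A_2\to A_2$. This is a homeomorphism of $A_2$, and the hypothesis $\varrho_2(g{|}A_1,f_0)<\varepsilon$ says precisely that $\varrho_2(b,\phi(b))<\varepsilon$ for every $b\in A_2$, i.e. $\phi$ is $\varepsilon$-close to $\mathrm{id}_{A_2}$. If I can extend $\phi$ to a homeomorphism $\Phi\colon X_2\to X_2$ with $\Phi{|}A_2=\phi$ and $\sup_{y}\varrho_2(y,\Phi(y))<\varepsilon$, then $f:=\Phi\circ g$ is the required map: indeed $f{|}A_1=\phi\circ(g{|}A_1)=f_0$, and for every $x\in X_1$, writing $y=g(x)$, we get $\varrho_2(g(x),f(x))=\varrho_2(y,\Phi(y))<\varepsilon$. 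Thus it suffices to treat a single $h$-homogeneous space $X=X_2$ with a \ndc subset $A=A_2$ and a self-homeomorphism $\phi$ of $A$ that is $\varepsilon$-close to the identity, and to extend it to an $\varepsilon$-close self-homeomorphism of $X$.

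Next I would localize near $A$ and fix a displacement budget. Set $\delta_0=\sup_{a}\varrho(a,\phi(a))<\varepsilon$ and choose $\rho_0>0$ with $3\rho_0+\delta_0<\varepsilon$. Since $A$ is closed and $X$ is strongly zero-dimensional, there is a clopen neighborhood $W$ of $A$ contained in $\{x:\varrho(x,A)<\rho_0\}$; put $\Phi=\mathrm{id}$ on the clopen set $X\setminus W$ (displacement $0$) and construct $\Phi$ on $W$. As $W$ is clopen in the $h$-homogeneous $X$, $W$ is itself $h$-homogeneous and $A$ is \ndc in $W$. Inside $W$ I would use a clopen collar: a decreasing sequence of clopen neighborhoods $W=W_0\supseteq W_1\supseteq\cdots$ of $A$ with $\bigcap_n W_n=A$, whose shells $S_n=W_n\setminus W_{n+1}$ are clopen in $X$ and hence $h$-homogeneous and mutually homeomorphic. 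At the $n$-th level I refine by a clopen partition $\mathscr A_n$ of $A$ of mesh tending to $0$; over a piece $P\in\mathscr A_n$ the corresponding clopen part of $S_n$ is mapped, using $h$-homogeneity, onto the clopen part of the image collar lying over $\phi(P)$, with boundary behaviour governed by $\phi{|}P$. This is exactly the tree-indexed matching of clopen pieces underlying Lemma~\ref{eh}, now carried out inside $W$ and steered by $\phi$, and it produces a homeomorphism $\Phi\colon W\to W$ with $\Phi{|}A=\phi$.

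The displacement estimate is then bookkeeping: if $x\in W$ lies over a piece $P\ni a$, then $\Phi(x)$ lies over $\phi(P)$, so $\varrho(x,\Phi(x))\le\varrho(x,a)+\varrho(a,\phi(a))+\varrho(\phi(a),\Phi(x))<\rho_0+\delta_0+(\rho_0+\mathrm{diam}\,\phi(P))<\varepsilon$, once the partitions $\mathscr A_n$ are fine enough that $\mathrm{diam}\,\phi(P)<\rho_0$. I expect the main obstacle to be the simultaneous control of two competing requirements: the map must extend to exactly $\phi$ on $A$ and be a homeomorphism, which forces the pieces sitting over each $P$ to match coherently across all levels as they shrink to $A$, while at the same time no point is displaced by $\varepsilon$ or more. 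Both are handled by taking the collar thin (small $\rho_0$) and the partitions $\mathscr A_n$ progressively finer, so that the nowhere density of $A$ guarantees that the shells exhaust $W\setminus A$ and that the level-by-level clopen matchings converge to $\phi$ on $A$; checking that this matching really assembles into a single homeomorphism with the stated bound is where the technique of Lemma~\ref{eh}, together with the zero-dimensional selection of clopen collars, carries the argument.
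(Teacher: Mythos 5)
Your reduction to the case $X_1=X_2$, $g=\mathrm{id}$ via $\phi=f_0\circ(g{|}A_1)^{-1}$ is correct and is a tidy simplification the paper does not bother to make; the verifications $f{|}A_1=f_0$ and $\varrho_2(g,f)=\sup_y\varrho_2(y,\Phi(y))$ are fine. The gap is in the extension step. Having reduced to extending $\phi\colon A\to A$, you do not apply Lemma~\ref{eh} --- the one available tool that actually extends a homeomorphism of nowhere dense closed subsets to the ambient clopen pieces --- but instead sketch a shell-by-shell re-derivation of it with metric control, and as written that sketch does not close. Concretely: (i) a piece $S_n\cap r^{-1}(P)$ can be empty while its intended target $S_n\cap r^{-1}(\phi(P))$ is nonempty (nothing forces a given shell to meet the cylinder over every $P$), so the level-$n$ matching by $h$-homogeneity is undefined; arranging nonemptiness forces the collars $W_n$ to be chosen after the partitions $\mathscr{A}_n$, level by level. (ii) Your displacement estimate silently assumes the retraction moves points by less than $\rho_0$: membership in $W\subseteq\{x:\varrho(x,A)<\rho_0\}$ bounds the distance from $x$ to $A$, not to the point $r(x)\in P$ over which $x$ ``lies,'' and the same issue recurs for $\varrho(\phi(a),\Phi(x))$. (iii) Continuity of the assembled map (and of its inverse) at points of $A$ requires a coordinated choice of collar widths, retraction displacements, and the meshes of both $\mathscr{A}_n$ and $\phi(\mathscr{A}_n)$; you defer exactly this to ``the technique of Lemma~\ref{eh},'' which is the part that has to be proved.

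The repair is to drop the shells entirely and use Lemma~\ref{eh} once per small cylinder, which is what the paper does (without your reduction): take discrete clopen covers of $X_1$ and $X_2$ of mesh less than $\delta/4$ with $\delta=\varepsilon-\varrho_2(g{|}A_1,f_0)$, the second refining the $g$-image of the first; intersecting with $A_2$ and with $f_0(A_1\cap{}\cdot{})$ produces a discrete clopen partition $\mathscr{V}_1$ of $A_1$ such that each $V_1$ sits in one small $U^1_t$ and $f_0(V_1)$ sits in one small $U^2_s$. The full retraction cylinders $W_1=(r^1_t)^{-1}(V_1)$ and $W_2=(r^2_s)^{-1}(f_0(V_1))$ are automatically nonempty, clopen, homeomorphic to $X_1$ and $X_2$, and contain $V_1$, $f_0(V_1)$ as nowhere dense closed subsets, so Lemma~\ref{eh} extends $f_0{|}V_1$ to a homeomorphism $W_1\to W_2$; the bound $\varrho_2(g(x),f(x))<\varepsilon$ then follows from the triangle inequality using only $\mathrm{diam}(U^2_t)<\delta/4$ and $\mathrm{diam}(W_2)<\delta/4$, with no control needed on what the extension does inside each cylinder. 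Your reduction would let one run this with $g=\mathrm{id}$ and a single cover, but it does not substitute for this single-level argument.
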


\begin{proof} For $X_1 = X_2 = \mathscr{C}$, where
$\mathscr{C}$ is the Cantor set, the lemma was proved by van
Engelen~\cite[Theorem 2.2]{EngQ} (see also~\cite{vm}).

Suppose $X_1$ and $X_2$ are not compact. Put $\delta = \varepsilon
- \varrho_2(g{|}A_1, f_0)> 0.$ Take a discrete clopen cover
$\mathscr{U}$ of $X_1$ by sets of diameter less than $\delta/4$.
Let $\mathscr{U}_{\,2} = \{U^2_t: t \in T\}$ be a discrete clopen
cover of $X_2$ such that $\mathscr{U}_{\,2}$ refines
$g(\mathscr{U})$ and mesh$\,\mathscr{U}_{\,2} < \delta/4$. Put
$U^1_t = g^{-1}(U^2_t)$. Then $\mathscr{U}_{\,1} = \{U^1_t: t \in
T\}$ is a discrete clopen cover of $X_1$ and
mesh$\,\mathscr{U}_{\,1} < \delta/4$.

Let $T_* = \{t \in T: U^1_t \cap A_1 \neq \emptyset\}$. Put $V^i_t
= U^i_t \cap A_i$, where $i \in \{ 1, 2\}$ and $t \in T_*$. The
family $\mathscr{V}_2$ of nonempty intersections $ \{V^2_s \cap
f_0 (V^1_t): s \in T_*, t \in T_* \}$ forms a discrete clopen
(relative to $A_2$) cover of $A_2$. Hence, the family
$\mathscr{V}_1 = \{f^{-1}_0 (V_2): V_2 \in \mathscr{V}_2 \}$ is a
discrete clopen (relative to $A_1$) cover of $A_1$.

For every $V_1 \in \mathscr{V}_1$ there exists exactly one $t \in
T_*$ such that $V_1 \subseteq V^1_t \subset U^1_t$. Then $V_2 =
f_0(V_1) \subseteq V^2_s \subset U^2_s$ for some (unique) $s \in
T_*$. Since Ind$X_1 = 0$ and $V^1_t$ is closed in $U^1_t$, there
exists a retraction $r^1_t: U^1_t \rightarrow V^1_t$. The set
$V_1$ is clopen in $V^1_t$; this implies that $W_1 =
(r^1_t)^{-1}(V_1)$ is a clopen subset of $U^1_t$ and $W_1 \approx
X_1$. Similarly, there exists a retraction $r^2_s: U^2_s
\rightarrow V^2_s$, and $W_2 = (r^2_s)^{-1}(V_2) \approx X_2$. The
set $V_i$ is nowhere dense and closed in $W_i$ for $i \in \{ 1,
2\}$. Therefore, $W_1 \setminus V_1$ and $W_2 \setminus V_2$ are
homeomorphic $h$-homogeneous spaces. By virtue of Lemma~\ref{eh}
there exists a homeomorphism $f_{W_1}: W_1 \rightarrow W_2$ such
that the restriction $f_{W_1}{|}V_1 = f_0$ and $f_{W_1}(V_1) =
V_2$. By construction, diam($W_2$)$ < \delta/4$.

Take two points $a \in V_1$ and $x \in W_1$. Then
$$\begin{array}{l}
\varrho_2(g(x), f_{W_1}(x)) \leq \varrho_2(g(x), g(a)) +
\varrho_2(g(a), f_{W_1}(a)) + \varrho_2(f_{W_1}(a), f_{W_1}(x))
\leq \\ \qquad{} \leq \delta/4 + \varrho_2(g(a), f_0(a)) +
\delta/4 \leq \varrho_2(g{|}A_1, f_0) + \delta/2 < \varepsilon.
\end{array}$$

Let us construct the mapping $f: X_1 \rightarrow X_2$. Take a
point $x \in X_1$. If $x \in U^1_t$ for $t \in T \setminus T_*$,
then we define $f(x) = g(x)$. If $x \in U^1_t$, where $t \in T_*$,
then $r^1_t(x) \in V_1 \subseteq V^1_t$ for some $V_1 \in
\mathscr{V}_1$. Put $f(x) = f_{W_1}(x)$ provided $W_1 =
(r^1_t)^{-1}(V_1)$. In both cases $\varrho_2(g(x), f(x)) <
\varepsilon$. Clearly, $f$ is a homeomorphism and $f$ extends
$f_0$.
\end{proof}

\begin{lem}\label{q3}
Let $A_i$ be a \ndc subset of an $h$-homogeneous space $(X_i,
\varrho_i)$, $i \in \{1, 2\}$. Suppose we have a homeomorphism
$g:X_1 \rightarrow X_2 $ such that $g(A_1) = A_2$. Then for a
homeomorphism $f_0: A_1 \rightarrow A_2 $  and $ \varepsilon > 0$
satisfying $\varrho_1(g^{-1}{|}A_2, f^{-1}_0) + \varrho_2(g{|}A_1,
f_0) < \varepsilon$ there exists a homeomorphism $f:X_1
\rightarrow X_2$ such that the restriction $f{|}A_1 = f_0$ and
$\varrho_1(g^{-1}, f^{-1}) + \varrho_2(g, f) < \varepsilon$.
\end{lem}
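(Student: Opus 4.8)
The plan is to re-run the construction from the proof of Lemma~\ref{q2}, observing that that construction is in fact symmetric in the two spaces, so that a single pass controls both $f$ and $f^{-1}$ at once. The compact case $X_1=X_2=\mathscr{C}$ I would dispose of exactly as in Lemma~\ref{q2}, via \cite[Theorem 2.2]{EngQ} (or by the same symmetric argument applied to clopen partitions of $\mathscr{C}$), so assume $X_1,X_2$ are not compact. Set $\delta=\varepsilon-\bigl(\varrho_1(g^{-1}|A_2,f_0^{-1})+\varrho_2(g|A_1,f_0)\bigr)>0$ and carry out precisely the construction of Lemma~\ref{q2}, but now requiring all the meshes to be $<\delta/8$. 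This yields discrete clopen covers $\mathscr{U}_1=\{U^1_t\}$ of $X_1$ and $\mathscr{U}_2=\{U^2_t\}$ of $X_2$ with $U^1_t=g^{-1}(U^2_t)$, both of mesh $<\delta/8$, the clopen pieces $W_1\subseteq U^1_t$ and $W_2\subseteq U^2_s$ with homeomorphisms $f_{W_1}:W_1\to W_2$ extending $f_0|V_1$, and the homeomorphism $f:X_1\to X_2$ that equals $g$ off $\cup_{t\in T_*}U^1_t$ and equals $f_{W_1}$ on each $W_1$.

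The forward estimate is literally the one in Lemma~\ref{q2}: for $x\in W_1$ and an anchor point $a\in V_1\subseteq A_1$, the triangle inequality bounds $\varrho_2(g(x),f(x))$ by $\mathrm{diam}\,g(W_1)+\varrho_2(g(a),f_0(a))+\mathrm{diam}\,W_2<\delta/8+\varrho_2(g|A_1,f_0)+\delta/8$, whence $\varrho_2(g,f)\leq\varrho_2(g|A_1,f_0)+\delta/4$. The point is that the same picture, read through $g^{-1}$ and $f^{-1}$, drives the reverse estimate. Indeed $f^{-1}$ equals $g^{-1}$ off $\cup_{s\in T_*}U^2_s$ and equals $f_{W_1}^{-1}$ on each $W_2$; here I use that \emph{both} covers have small mesh, so that $g^{-1}(W_2)\subseteq g^{-1}(U^2_s)=U^1_s$ and $W_1$ each have diameter $<\delta/8$. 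Taking $y\in W_2$ and the anchor point $b=f_0(a)\in V_2\subseteq A_2$, and using $f^{-1}(b)=f_0^{-1}(b)$ since $b\in A_2$, one obtains
$$\varrho_1(g^{-1}(y),f^{-1}(y))\leq \mathrm{diam}\,g^{-1}(W_2)+\varrho_1(g^{-1}(b),f_0^{-1}(b))+\mathrm{diam}\,W_1<\delta/8+\varrho_1(g^{-1}|A_2,f_0^{-1})+\delta/8,$$
so that $\varrho_1(g^{-1},f^{-1})\leq\varrho_1(g^{-1}|A_2,f_0^{-1})+\delta/4$.

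Adding the two bounds gives $\varrho_1(g^{-1},f^{-1})+\varrho_2(g,f)\leq\varrho_1(g^{-1}|A_2,f_0^{-1})+\varrho_2(g|A_1,f_0)+\delta/2=\varepsilon-\delta/2<\varepsilon$, as required; on the pieces with $t\notin T_*$ both $f$ and $f^{-1}$ coincide with $g$ and $g^{-1}$, so nothing is to be estimated there. The only point that genuinely needs verifying — and which I expect to be the sole real obstacle — is that the construction of Lemma~\ref{q2} is symmetric enough to support the reverse estimate: concretely, that $g$ carries each small piece $U^1_t$ onto the small piece $U^2_t$ of the other cover, so that $g^{-1}$ likewise displaces points only within small sets, and that $f_{W_1}$, being an extension of $f_0|V_1$, has inverse restricting to $f_0^{-1}$ on the trace $V_2\subseteq A_2$. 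Both hold by construction, since $\mathscr{U}_1,\mathscr{U}_2$ are built so that $U^1_t=g^{-1}(U^2_t)$ with both meshes $<\delta/8$, and $f_{W_1}$ is supplied by Lemma~\ref{eh} as an extension of $f_0|V_1$.
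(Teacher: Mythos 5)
Your proposal is correct and takes essentially the same route as the paper: the paper's own proof simply reruns the construction of Lemma~\ref{q2} with a suitably shrunken $\delta$ (there obtained by splitting $\varepsilon$ as $\varepsilon_1+\varepsilon_2$ with $d_i<\varepsilon_i$) and asserts that the resulting $f$ "can easily be checked" to work. You make explicit exactly the point the paper leaves implicit --- the symmetric triangle-inequality estimate for $f^{-1}$ using $U^1_t=g^{-1}(U^2_t)$ and the anchor $b=f_0(a)\in V_2$ --- and you correctly tighten the mesh so that the two bounds sum to strictly less than $\varepsilon$.
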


\begin{proof}
For $d_1 = \varrho_1(g^{-1}{|}A_2, f^{-1}_0)$ and $d_2 =
\varrho_2(g{|}A_1, f_0)$ there exist $\varepsilon_1$ and
$\varepsilon_2$ such that $\varepsilon_1 + \varepsilon_2 =
\varepsilon$, $d_1 < \varepsilon_1$, and $d_2 < \varepsilon_2$.
Let $\delta = \mbox{min} \{\varepsilon_1 - d_1, \varepsilon_2 -
d_2\}$. Take the mapping $f: X_1 \rightarrow X_2$ that was
obtained in the proof of Lemma~\ref{q2}. It can easily be checked
that $f$ is the required homeomorphism.
\end{proof}

We identify cardinals with initial ordinals; in particular,
$\omega  = \{0, 1, 2, \ldots \}$. Denote by $\Lambda$ the unique
0-tuple, i.e., $k^0 = \{\Lambda \}$. Let $k^1 = k$. If $t \in
k^n$, then lh$(t)= n$ is a length of $t$ and $t\hat{\,} t_n =
(t_0, \ldots, t_{n-1}, t_n) \in k^{n+1}$ for $t_n \in k$ . Let
$k^{<\omega} = \cup \{k^i: i \in \omega\}$. For every infinite
cardinal $k$ let $B(k) = k^\omega$ be the Baire space of weight
$k$ (see~\cite{Eng}). For a point $x = (x_0, x_1, \ldots) \in
k^{\omega}$ we denote by $x {\upharpoonright} n$ the $n$-tuple
$(x_0, x_1, \ldots, x_{n-1})$. For an $n$-tuple $t \in k^n$ we
denote by $B_t (k)$ the Baire interval $\{x \in k^\omega: x
{\upharpoonright} n = t \}$.

A set $Y \subset X$ is \textit{a Souslin set} (or \textit{an
analytic set}~\cite{Han}, or an $A$-\textit{set}) in a space $X$
if, for some collection  $\{ F(t{\upharpoonright} n): n \in
\omega, t \in \omega^\omega \}$ of closed subsets of $X$, we have
$Y = \cup \{\bigcap \{F(t{\upharpoonright} n): n \in \omega \} : t
\in \omega^\omega \}$. $Y$ is called a \textit{co-Souslin} set in
a space $X$ if $X \setminus Y$ is a Souslin set in $X$. If
$\mathscr{A}$ and $\mathscr{B}$ are two families of subsets of
$X$, we say (see~\cite{Han}) that $\mathscr{B}$ is a \textit{base}
for $\mathscr{A}$ if each member of $\mathscr{A}$ is a union of
members from  $\mathscr{B}$. A \textit{$\sigma$-discrete base} is
a base which is also a $\sigma$-discrete family. A mapping $f: X
\rightarrow Y$ is called \textit{co-$\sigma$-discrete} if the
image of each discrete family in $X$ has a $\sigma$-discrete base
in $Y$.

If $A$ is an $F_\sigma$-subset of $X$, then we write $A \in
\mathscr{F}_\sigma(X)$. Similarly, we introduce families
$\mathscr{G}_\sigma(X)$, $\mathscr{F}_{\sigma \delta}(X)$, and so
forth. A space $Y$ is \textit{an absolute $G_\delta$-set} if for
every homeomorphic embedding $f:Y \rightarrow X$ of $Y$ in a space
$X$ the image $f(Y) \in \mathscr{G}_\sigma(X)$. Denote by
$\mathscr{G}_\delta$, $\mathscr{F}_{\sigma \delta}$, and
$\mathscr{G}_{\delta \sigma}$ the families of  absolute
$G_\delta$-sets, absolute $F_{\sigma \delta}$-sets, and absolute
$G_{\delta \sigma}$-sets, respectively.

For a space $X$ define the family $LF(X) = \{Y:$ each point $y \in
Y$ lies in some clopen neighborhood that is homeomorphic to a
closed subset of $X\}$. A space $Y \in \sigma LF(X)$ if $Y = \cup
\{Y_i: i \in \omega \}$, where each $Y_i \in LF(X)$ and $Y_i$ is a
closed subset of $Y$. We say that $X$ is \textit{locally of
weight} ${<}k$ if each point from $X$ has a neighborhood of weight
${<}k$. $X$ is said to be $\sigma$-locally of weight ${<}k$ (in
symbols, $X \in \sigma LW({<}k)$) provided $X = \cup \{X_i: i \in
\omega \}$, where each $X_i$ is locally of weight ${<}k$
(see~\cite{St}).

\begin{lem}\label{sg}
\rm{1)} Let a space $X$ be locally $\sigma LW({<}k) +
\mathscr{G}_{\delta \sigma}$. Then $X \in \sigma LW({<}k) +
\mathscr{G}_{\delta \sigma}$.

\rm{2)} Let $ X_i \in \sigma LW({<}k) + \mathscr{G}_{\delta
\sigma}$ for each $i \in \omega$. Then $\cup \{X_i: i \in \omega
\} \in \sigma LW({<}k) + \mathscr{G}_{\delta \sigma}.$
\end{lem}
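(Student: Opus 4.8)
The plan is to reduce both statements to two closure properties of the classes involved: closure of $\sigma LW({<}k)$ and of $\mathscr{G}_{\delta\sigma}$ under countable unions (for part~2) and under $\sigma$-discrete clopen unions (for part~1). Everything else is bookkeeping, so I would isolate these closure facts and apply them.

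For part~2), write each $X_i = A_i \cup B_i$ with $A_i \in \sigma LW({<}k)$ and $B_i \in \mathscr{G}_{\delta\sigma}$, and set $A = \cup\{A_i: i \in \omega\}$, $B = \cup\{B_i: i \in \omega\}$, so that $\cup\{X_i: i \in \omega\} = A \cup B$. That $A \in \sigma LW({<}k)$ is immediate from the definition, since a countable union of countable unions of locally-weight-${<}k$ sets is again such a union. For $B$, I would embed $\cup\{X_i: i \in \omega\}$ in an arbitrary metrizable space $X$; each $B_i$, being an absolute $G_{\delta\sigma}$-set, is then $G_{\delta\sigma}$ in $X$, and a countable union of $G_{\delta\sigma}$-subsets of $X$ is again $G_{\delta\sigma}$ in $X$ because $\mathscr{G}_{\delta\sigma}$ is an additive Borel class. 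Hence $B \in \mathscr{G}_{\delta\sigma}$ and part~2) follows.

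For part~1), fix for each $x \in X$ a clopen neighbourhood $U_x = A_x \cup B_x$ with $A_x \in \sigma LW({<}k)$ and $B_x \in \mathscr{G}_{\delta\sigma}$. Since $X$ is metrizable and strongly zero-dimensional, it admits a $\sigma$-discrete clopen cover $\mathscr{W} = \cup\{\mathscr{W}_n: n \in \omega\}$ refining $\{U_x: x \in X\}$. For each $W \in \mathscr{W}$ choose $x(W)$ with $W \subseteq U_{x(W)}$ and split $W = (W \cap A_{x(W)}) \cup (W \cap B_{x(W)})$; as $W$ is clopen, $W \cap A_{x(W)}$ is a clopen subspace of $A_{x(W)}$, hence lies in $\sigma LW({<}k)$, while $W \cap B_{x(W)}$ is a clopen subspace of the absolute $G_{\delta\sigma}$-set $B_{x(W)}$, hence lies in $\mathscr{G}_{\delta\sigma}$. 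Putting $A = \cup\{W \cap A_{x(W)}: W \in \mathscr{W}\}$ and $B = \cup\{W \cap B_{x(W)}: W \in \mathscr{W}\}$ gives $X = A \cup B$, and it remains to check $A \in \sigma LW({<}k)$ and $B \in \mathscr{G}_{\delta\sigma}$.

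Both checks rest on the same mechanism, which I expect to be the main obstacle: a union taken over a discrete clopen family stays inside the class. For $A$ this is routine, because ``locally of weight ${<}k$'' is a local property, so within a single discrete level $\mathscr{W}_n$ the pieces $W \cap A_{x(W)}$ do not interact and their union is again $\sigma LW({<}k)$; summing over $n$ keeps us in $\sigma LW({<}k)$. For $B$ the corresponding fact --- that a $\sigma$-discrete clopen union of absolute $G_{\delta\sigma}$-sets is an absolute $G_{\delta\sigma}$-set --- is the genuinely non-separable point, since discreteness of the family need not survive an arbitrary embedding, so the plain embedding argument of part~2) does not apply. I would derive it from the theory of absolute Borel classes in metrizable spaces (Hansell~\cite{Han}), where the additive classes are closed under $\sigma$-discrete unions; failing a direct citation, I would verify it by passing to the topological sum $\bigoplus_{W} (W \cap B_{x(W)})$, whose completion is the topological sum of the completions of the summands, and computing a $G_{\delta\sigma}$-representation one discrete level at a time, using that unions and intersections commute across disjoint clopen summands. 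With both parts in hand, $X = A \cup B \in \sigma LW({<}k) + \mathscr{G}_{\delta\sigma}$, which is part~1). One may also phrase part~1) as $X = \cup\{\cup \mathscr{W}_n: n \in \omega\}$ with each $\cup \mathscr{W}_n \in \sigma LW({<}k) + \mathscr{G}_{\delta\sigma}$ by the discrete-union closure, and then conclude via part~2).
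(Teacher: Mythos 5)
Your proof is correct and follows essentially the same route as the paper: the paper's proof simply takes the neighbourhoods $U(x)=L(x)\cup G(x)$, unions the two kinds of pieces separately, and delegates the local-to-global closure facts to citations (Stone for $\sigma LW({<}k)$ and Engelking, Pr.~4.5.8, for $\mathscr{G}_{\delta\sigma}$), while treating part 2) as obvious. What you do is unfold those citations explicitly via a $\sigma$-discrete clopen refinement and the closure of the additive class $\mathscr{G}_{\delta\sigma}$ under $\sigma$-discrete unions, which is exactly the mechanism underlying the cited results.
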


\begin{proof}
For each point $x \in X$ take a neighborhood $U(x)$ such that
$U(x) = L(x) \cup G(x)$, where $L(x)$ is $\sigma LW({<}k)$ and
$G(x) \in \mathscr{G}_{\delta \sigma}$. Then $L = \cup\{L(x): x
\in X \}$ is $\sigma LW({<}k)$ (see~\cite{St}) and $G =
\cup\{G(x): x \in X \}$ belongs to $\mathscr{G}_{\delta \sigma}$
(see~\cite{Eng}, Pr.~4.5.8). So, $X = L \cup G \in \sigma LW({<}k)
+ \mathscr{G}_{\delta \sigma}$.

The second part of the lemma is obvious.
\end{proof}

\begin{lem}\label{hg}
Suppose an absolute Souslin set $Y$ is not $\sigma LW({<}k) +
\mathscr{G}_{\delta \sigma}$ for a cardinal $k$. There exists a
\ndc subset $Z \subset Y$ such that $Z$ is nowhere $\sigma
LW({<}k) + \mathscr{G}_{\delta \sigma}$.
\end{lem}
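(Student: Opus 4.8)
Throughout, write $\mathscr{P}$ for the property $\sigma LW({<}k) + \mathscr{G}_{\delta\sigma}$. By Lemma~\ref{sg} this property is localizable and countably additive, and the first thing I would record is that it is also hereditary with respect to open subsets. Indeed, $\sigma LW({<}k)$ passes to arbitrary subspaces, since weight does not increase on subspaces; and an open subset of an absolute $G_{\delta\sigma}$-set is again an absolute $G_{\delta\sigma}$-set, because in a metric space every open set is $G_\delta$ and the class $\mathscr{G}_{\delta\sigma}$ is closed under intersection with a $G_\delta$-set. Splitting a decomposition $X = A \cup B$ with $A \in \sigma LW({<}k)$, $B \in \mathscr{G}_{\delta\sigma}$ accordingly, one sees that any open subspace of a $\mathscr{P}$-space is again a $\mathscr{P}$-space.

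First I would isolate the ``good'' part of $Y$. Let $W$ be the union of all open subsets of $Y$ having property $\mathscr{P}$. Each point of $W$ has a neighbourhood in $W$ with property $\mathscr{P}$, so $W$ is locally $\mathscr{P}$; hence $W \in \mathscr{P}$ by Lemma~\ref{sg}, and $W$ is the largest open $\mathscr{P}$-subset of $Y$. Put $F = Y \setminus W$, a closed subset of $Y$ that is nonempty because $Y \notin \mathscr{P}$. I claim $F$ is nowhere $\mathscr{P}$. If some nonempty relatively open set $O = F \cap G$, with $G$ open in $Y$, had property $\mathscr{P}$, then writing $G = (G \cap W) \cup O$ and noting that $G \cap W$ is an open subset of the $\mathscr{P}$-set $W$, Lemma~\ref{sg} would give $G \in \mathscr{P}$; but then $G \subseteq W$ and $O = G \cap F = \emptyset$, a contradiction.

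It remains to force nowhere density. If $F$ has empty interior in $Y$, then $Z = F$ is the required \ndc set. Otherwise $V = \mathrm{int}_Y F$ is a nonempty open subset of $Y$ that is nowhere $\mathscr{P}$, and, being $F_\sigma$ in the absolute Souslin set $Y$, is itself an absolute Souslin set. Inside $V$ I would build a Cantor scheme $\{C_s : s \in \omega^{<\omega}\}$ of nonempty sets clopen in $V$ (using $\mathrm{Ind}\,V = 0$), with the closures of distinct immediate successors disjoint, with diameters tending to $0$ along every branch, and with $C_s \setminus \mathrm{cl}_Y(\bigcup_i C_{s\hat{\,}i})$ nonempty for every $s$. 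The resulting set $Z = \bigcap_n \bigcup_{|s|=n} \mathrm{cl}_Y C_s$ is then closed in $Y$ as an intersection of closed sets, and the gap condition makes its interior empty, so $Z$ is nowhere dense. Its basic relatively open pieces are the sets $Z_s = Z \cap \mathrm{cl}_Y C_s$, so $Z$ is nowhere $\mathscr{P}$ exactly when every $Z_s$ fails to be a $\mathscr{P}$-space.

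The hard part is precisely this last requirement. The pieces $C_s$, being clopen in the nowhere-$\mathscr{P}$ set $V$, are all non-$\mathscr{P}$, but a closed subset such as $Z_s$ might collapse to a $\mathscr{P}$-space, so the successors must be chosen so as to preserve non-$\mathscr{P}$-ness in the limit. This is where the absolute Souslin hypothesis is essential: adapting to $\mathscr{P}$ the Hurewicz-type reflection technique of Saint Raymond~\cite{Sai} and van Engelen~\cite{EngQ},~\cite{En-tQ}, together with the non-separable method of Ostrovsky~\cite{Ostr}, one selects the scheme so that the limit along every subtree stays an absolute Souslin non-$\mathscr{P}$-set; combined with the localization and $\sigma$-additivity of Lemma~\ref{sg}, this yields $Z_s \notin \mathscr{P}$ for all $s$. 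I expect the careful verification of this preservation, rather than any single estimate, to be the real obstacle, the diameter control and gap condition that give closedness and nowhere density being routine once the scheme is in place.
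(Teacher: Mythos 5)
There is a genuine gap, and it sits exactly where you say the ``hard part'' is. Your first reduction (pass to $F = Y \setminus W$, where $W$ is the largest open $\mathscr{P}$-subset) is fine, but it only yields a \emph{closed} nowhere-$\mathscr{P}$ set, not a nowhere dense one, and in the remaining case $\operatorname{int}_Y F \neq \emptyset$ you do not actually produce the required set: the Cantor scheme is specified only up to the condition ``every limit piece $Z_s$ stays non-$\mathscr{P}$,'' and you defer the verification of that condition to an unspecified adaptation of the Saint Raymond--van Engelen--Ostrovsky reflection technique. That deferred step is not routine --- it is essentially the content of Theorem~\ref{hf}, which in this paper is the \emph{harder} result and is proved \emph{using} Lemma~\ref{hg}. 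So as written the argument both leaves its crucial step unproved and, if completed along the lines you indicate, would invert the logical order of the paper.

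The idea you are missing is the one that makes the lemma short: an absolute Souslin set has the Baire property, so $Y = F \cup G$ with $G \in \mathscr{G}_\delta$ and $F$ an $F_\sigma$-subset of $Y$ of first category in $Y$. Since $\mathscr{G}_\delta \subseteq \mathscr{G}_{\delta\sigma}$, the set $G$ is absorbed into the $\mathscr{P}$-side of any decomposition, so $F \notin \mathscr{P}$. Being a meager $F_\sigma$, $F = \bigcup_i F_i$ with each $F_i$ nowhere dense and closed in $Y$; by the countable additivity in Lemma~\ref{sg} some $F_j \notin \mathscr{P}$, and then your own trimming step applied to $F_j$ (rather than to $Y$) gives the nonempty closed nowhere-$\mathscr{P}$ set $Z \subseteq F_j$, which is automatically nowhere dense in $Y$ because $F_j$ is. No Cantor scheme is needed.
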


\begin{proof}
As an absolute Souslin set, $Y$ has the Baire property. Therefore,
$Y = F \cup G$, where $F$ is of first category in $Y$, $G \in
\mathscr{G}_\delta$, and $F \cap G = \emptyset$. Clearly, $F
\notin \sigma LW({<}k) + \mathscr{G}_{\delta \sigma}$ and $F \in
\mathscr{F}_\sigma(Y)$. Then $F = \cup \{F_i: i \in \omega \}$,
where each $F_i$ is a \ndc subset of $Y$. This implies that there
exists a $j \in \omega$ such that $F_j$ is not $\sigma LW({<}k) +
\mathscr{G}_{\delta \sigma}$. From Lemma~\ref{sg} it follows that
the nonempty set
$$Z = F_j \setminus \cup\{U \; \mathrm{is \; open \; in} \;
F_j : U \in \sigma LW({<}k) + \mathscr{G}_{\delta \sigma} \}$$ is
closed in $F_j$ and nowhere $\sigma LW({<}k) + \mathscr{G}_{\delta
\sigma}$. Thus, $Z$ is the required set.
\end{proof}

The following Theorem~\ref{hf} is important. Using it, for a given
non-$\sigma LW({<}k) + \mathscr{G}_{\delta \sigma}$-space $A$, we
can choose a \ndc subset of $A$ which is nowhere $\sigma LW({<}k)
+ \mathscr{G}_{\delta \sigma}$.

\begin{thm}\label{hf}
Let $F$ be an $F_\sigma$-subset of the Baire space $B^*(k)$ and $A
\subset F$. Suppose  $A$ is a co-Souslin set in $B^*(k)$ and $A$
is not $\sigma LW({<}k) + \mathscr{G}_{\delta \sigma}$. Then there
exists a closed subset $M \subseteq B^*(k)$ satisfying the
following conditions

\begin{enumerate}[\upshape 1)]
    \item $A \cap M$ is a \ndc subset of $A$;
    \item $A \cap M$ is nowhere $\sigma LW({<}k) +
     \mathscr{G}_{\delta \sigma}$ and of first category;
    \item $M \subseteq F$ and $M$ is homeomorphic to $B^*(k)$.
\end{enumerate}
\end{thm}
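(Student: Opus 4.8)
The plan is to realize $A\cap M$ as a single ``kernel'' set extracted from $A$, and then to fatten this kernel up to a closed copy of $B^*(k)$ lying inside $F$. First I would produce the kernel. Since $A$ is co-Souslin in $B^*(k)$, its complement is Souslin, so $A$ has the Baire property; hence $A=P\cup H$ with $P$ of first category in $A$, $H\in\mathscr{G}_\delta$, and $P\cap H=\emptyset$. Because $H\in\mathscr{G}_\delta\subseteq\mathscr{G}_{\delta\sigma}$, the hypothesis $A\notin\sigma LW({<}k)+\mathscr{G}_{\delta\sigma}$ forces $P\notin\sigma LW({<}k)+\mathscr{G}_{\delta\sigma}$. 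Writing $P=\bigcup_i P_i$ with each $P_i$ a \ndc subset of $A$, some $P_j\notin\sigma LW({<}k)+\mathscr{G}_{\delta\sigma}$, and exactly as in Lemma~\ref{hg} (via Lemma~\ref{sg}) I would pass to the kernel $Z=P_j\setminus\bigcup\{U:U\text{ open in }P_j,\ U\in\sigma LW({<}k)+\mathscr{G}_{\delta\sigma}\}$. This $Z$ is a \ndc subset of $A$, of first category, and nowhere $\sigma LW({<}k)+\mathscr{G}_{\delta\sigma}$; it is the set I will arrange to be $A\cap M$, so the first two conclusions become automatic once $Z=A\cap M$ with $Z$ nowhere dense in $M$.

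Next I would isolate the two structural facts that make the fattening possible, both coming from ``nowhere $\sigma LW({<}k)+\mathscr{G}_{\delta\sigma}$''. (a) For every clopen $U\subseteq B^*(k)$ meeting $Z$, the set $(F\setminus A)\cap U$ is dense near $Z$: otherwise $A\cap U=F\cap U$ on some clopen $U$, and then $A\cap U$ would be $F_\sigma$, hence $G_{\delta\sigma}$, and so (being $G_{\delta\sigma}$ in the completely metrizable $B^*(k)$) an absolute $G_{\delta\sigma}$-set, contradicting that $Z$ is nowhere $\mathscr{G}_{\delta\sigma}$. This is precisely where the hypotheses $A\subseteq F$ and $F\in\mathscr{F}_\sigma(B^*(k))$ enter. (b) Since $Z$ is nowhere $\sigma LW({<}k)$, every nonempty relatively clopen piece of $Z$ has local weight $k$, so it can be split into $k$ many nonempty clopen pieces; this supplies the $k$-fold branching needed to reach $B^*(k)=k^\omega$.

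The construction itself is a Lusin scheme $\{U_t:t\in k^{<\omega}\}$ of clopen subsets of $B^*(k)$ with $\mathrm{cl}\,U_{t\hat{\,}i}\subseteq U_t$, with $\{U_{t\hat{\,}i}:i\in k\}$ discrete in $U_t$, and with $\mathrm{mesh}\,\{U_t:t\in k^n\}\to 0$. I would run it so that each $U_t$ meets $Z$, the $k$-splitting at every node is supplied by (b), and the branch limits are steered into $F$ and off $A$ as follows. Writing $F=\bigcup_n F_n$ with $F_n$ closed and increasing, I commit each branch from some level on to a single $F_m$, so that its limit, taken in the closed set $F_m$, lies in $F$; and, using that $B^*(k)\setminus A$ is Souslin, I interleave the Souslin scheme defining $B^*(k)\setminus A$ with the $U_t$'s so that every branch not tracking a point of $Z$ converges into $F\setminus A$, while the $Z$-branches are made to enumerate a dense-in-$Z$ set and recover all of $Z$. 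The limit set $M=\bigcap_n\bigcup_{t\in k^n}\mathrm{cl}\,U_t$ is then a complete scheme with full $k$-branching, hence a homeomorph of $k^\omega=B^*(k)$; by construction $M\subseteq F$, and $M\subseteq\mathrm{cl}\,Z\cup(F\setminus A)$ gives $M\cap A\subseteq\mathrm{cl}\,Z\cap A=Z\subseteq M$, so $M\cap A=Z$ is nowhere dense in $M$. This yields all three conclusions.

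The hard part is this last step: simultaneously forcing $M\approx B^*(k)$ (full $k$-branching plus completeness of the scheme), $M\subseteq F$ (branch limits trapped in some $F_m$), and $M\cap A=Z$ (the fattening points must avoid $A$). These pull against one another, and reconciling them is exactly the bookkeeping that marries the $k$-branching from (b), the density of $F\setminus A$ from (a), the $F_\sigma$-decomposition $F=\bigcup_n F_n$ for trapping limits inside $F$, and the Souslin scheme of $B^*(k)\setminus A$ (the only place the co-Souslin hypothesis on $A$ is really used) for keeping the fattening off $A$. I expect the delicate point to be choosing, at each node, clopen sets that advance the Souslin scheme for $B^*(k)\setminus A$ along the fattening branches while keeping a dense supply of nodes anchored on $Z$, so that neither $M\approx B^*(k)$ nor $M\cap A=Z$ is spoiled.
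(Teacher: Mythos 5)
There is a genuine gap, and it sits exactly at conclusion 2). You make $A\cap M$ a \emph{single} kernel $Z$ extracted once from the first--category part $P$ of $A$, and you assert that $Z$ is of first category. That does not follow: $Z\subset P$ only makes $Z$ of first category \emph{in} $A$, and ``nowhere $\sigma LW({<}k)+\mathscr{G}_{\delta\sigma}$'' does not force first category in itself either. Concretely, let $C$ be a nowhere dense closed copy of $B^*(k)$ in $B^*(k)$, let $W\subset C$ be a dense copy of the space $M^{II}_k$ of Definition~\ref{d-1} (an absolute $F_{\sigma\delta}$-set which is nowhere $\sigma LW({<}k)+\mathscr{G}_{\delta\sigma}$ yet contains a dense topologically complete subspace, hence is a Baire space), and let $A=X_0\cup W$ with $X_0$ a dense first-category copy of $Q(k)^\omega$ disjoint from $C$. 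Then $A$ is Borel and not $\sigma LW({<}k)+\mathscr{G}_{\delta\sigma}$, the whole of $A$ is its first-category part, $W$ is a legitimate first member of the decomposition $P=\bigcup_iP_i$, and your extraction can return $Z=W$, which is \emph{not} of first category. This is precisely why the paper does not stop at one kernel: it plants a kernel $Z(t,\alpha)$ at every node of an $(\omega\times k)$-branching tree, separated by pieces of $F\setminus A$, and takes $A\cap M=Y=\bigcup_nY_n$; first category of $Y$ is then proved from the fact that each level $Z_n$ is nowhere dense in $Z_{n+1}$ (Claim~\ref{c2}), not from any property of an individual kernel. Repairing your argument means iterating the kernel extraction along such a tree, i.e., essentially reproducing the paper's construction.

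Two further points. Since $M$ is closed and must contain $Z$, it contains $\overline{Z}$, so $M\subseteq F$ forces $\overline{Z}\subseteq F$; this can fail unless the kernel is anchored inside a single closed piece $F_j$ of $F=\bigcup_jF_j$ (the paper's condition (a3), and the reindexing via $F_j\cap H_m$ in Lemma~\ref{as}, do exactly this), and your extraction does not arrange it. Second, your plan to ``interleave the Souslin scheme defining $B^*(k)\setminus A$'' so that fattening branches land in $F\setminus A$ is the separable-case device; for $k>\omega$ a Souslin scheme over $\omega^\omega$ does not by itself supply discrete $k$-fold branching inside $F\setminus A$ near $Z$, and the paper instead invokes Hansell's theorem to write $F\setminus A$ as a continuous co-$\sigma$-discrete image of $B(k)$, runs the tree upstairs in the domain (the sets $T(t,\alpha)$), and uses the ``usual set'' argument to localize where $A$ remains non-$\sigma LW({<}k)+\mathscr{G}_{\delta\sigma}$. (For $k>\omega$, once $\overline{Z}\subseteq F$ is secured, Stone's theorem already gives $\overline{Z}\approx B^*(k)$, so the elaborate fattening is not what conclusion 3) is for; it is there to manufacture conclusion 2).)
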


\begin{proof} For $k = \omega$ the theorem was proved by van
Engelen (see \cite[Lemma 3.2]{EngQ}), because $B^*(\omega)$
denotes the Cantor set $\mathscr{C}$.

Let the cardinal $k > \omega$. Denote by $X$ the Baire space
$B(k)$. Replacing $F$ by $F \cap \overline{A}$, we may assume that
$F \subseteq \overline{A}$, where the bar denotes the closure in
$X$.

Clearly, $F {\setminus} A$ is a Souslin set in $X$. Let us show
that $w(F {\setminus} A)= k$. Indeed, if $w(F {\setminus} A) < k$,
then the boundary of $A$ in $F$ has weight ${<} k$ and the
interior of $A$ in $F$ is an $\mathscr{F}_\sigma$-subset of $X$.
This contradicts the condition $A \notin \sigma LW({<}k) +
\mathscr{G}_{\delta \sigma}$.

By virtue of the Hansell theorem (see~\cite[Theorem 4.1]{Han})
there exists a continuous co-$\sigma$-discrete mapping $\varphi:
B(k) \rightarrow F {\setminus} A$ of the Baire space $B(k)$ onto
$F {\setminus} A$. Denote this copy of $B(k)$ by $H$, i.e., we
have $\varphi: H \rightarrow F {\setminus} A$. An open subset $U
\subset H$ is called \textit{usual} if there exists a set $E_U \in
\mathscr{F}_\sigma(X)$ such that $\varphi(U) \subseteq E_U
\subseteq F$ and $E_U \cap A$ is $\sigma LW({<}k) +
\mathscr{G}_{\delta \sigma}$ or $E_U \cap A = \emptyset$. The set
$\widehat{T} = \cup \{U: U$ is usual in $H \}$ is open in $H$. Let
us show that $\widehat{T}$ is usual in $H$. Take a
$\sigma$-discrete cover $\tau$ of $\widehat{T}$ by usual sets.
Since $\varphi$ is co-$\sigma$-discrete, there exists a
$\sigma$-discrete base $\mathscr{B}$ for $\varphi(\tau)$ in $X$.
To each $B \in \mathscr{B}$ assign $U_B \in \tau$ such that $B
\subseteq \varphi(U_B)$. Put $B^* = E_{U_B} \cap \mbox{cl}_X B$.
Then $\{B^*: B \in \mathscr{B} \}$ is  a $\sigma$-discrete family
in $X$. Clearly, $E = \cup \{B^* : B \in \mathscr{B} \} \subseteq
F$ and $E \in \mathscr{F}_\sigma(X)$. According to Lemma~\ref{sg},
$A \cap E \in \sigma LW({<}k) + \mathscr{G}_{\delta \sigma}$. By
construction, $\varphi(\widehat{T}) \subseteq \cup \mathscr{B}
\subseteq E$. Hence, the set $\widehat{T}$ is usual in $H$.

We claim that $F \setminus (A \cup E) \neq \emptyset$. Assume the
converse. Then $F {\setminus} E \subseteq A$. Since $A \cap E \in
\sigma LW({<}k) + \mathscr{G}_{\delta \sigma}$ and $F \setminus E
\in \mathscr{G}_{\delta \sigma}$, we see that $$A = (A \cap E)
\cup (F \setminus E) \in \sigma LW({<}k) + \mathscr{G}_{\delta
\sigma},$$ a contradiction. Thus, $F \setminus (A \cup E) \neq
\emptyset$. Hence, the set $T = \varphi^{-1}(F \setminus E)$ is
nonempty. Clearly, $T \cap \widehat{T} = \emptyset$.

\begin{cl}\label{c1}
If $\emptyset \neq U$ is open in $T$, then $A \cap
\overline{\varphi(U)}  \notin \sigma LW({<}k) +
\mathscr{G}_{\delta \sigma}$.
\end{cl}

Take an open subset $U$ of $T$. Let $U = U_1 \cap T$, where $U_1$
is open in $H$. Then $$\varphi(U_1)= \varphi(U)\cup \varphi(U_1
\setminus U) \subset (\overline{\varphi(U)} \cap F) \cup E =
E_1.$$ Clearly, $E_1 \subseteq F$. Since $\overline{\varphi(U)}
\cap F \in \mathscr{F}_\sigma(X)$, we have $E_1 \in
\mathscr{F}_\sigma(X)$. By construction, $U_1$ is not a usual set.
Therefore, $A \cap E_1 \notin \sigma LW({<}k) +
\mathscr{G}_{\delta \sigma}$. But $A \cap E  \in \sigma LW({<}k) +
\mathscr{G}_{\delta \sigma}$ and $A \subset F$; hence, $A \cap
\overline{\varphi(U)} $ is not $\sigma LW({<}k) +
\mathscr{G}_{\delta \sigma}$. Claim~\ref{c1} is verified. \hfill
$\lozenge$

Since $E \in \mathscr{F}_\sigma(F)$, we see that $T =
\varphi^{-1}(F \setminus E) \in \mathscr{G}_\sigma(H)$. Hence, $T$
is homeomorphic to a closed subset of the Baire space $H= B(k)$.
We may now assume that $T$ is a closed subset of the Baire space
$B(k)$ with the standard metric $d$, and the restriction of $d$ to
$T$ induces the original topology on $T$. Let $\varrho$ be the
standard metric on the Baire space $X = B(k)$.

Let $F = \cup \{F_i: i \in \omega \}$, where each $F_i$ is closed
in $X$. By induction, for every $n \in \omega$, $t \in \omega^n$,
and $\alpha \in k^n$ we shall construct  sets $Z(t, \alpha)
\subset X$, clopen sets $V(t, \alpha) \subset X$ and $W(t, \alpha)
\subset X$, disjoint families $\mathscr{V}(t, \alpha) = \{
V(t\hat{\,}i, \alpha\hat{\,}\alpha_i): i \in \omega, \alpha_i \in
k \}$ in $X$, clopen sets $T(t, \alpha) \subset T$, disjoint
families $\mathscr{T}(t, \alpha) = \{ T(t\hat{\,}i,
\alpha\hat{\,}\alpha_i): i \in \omega, \alpha_i \in k \}$ in $T$,
families $\mathscr{R}(t, \alpha) = \varphi(\mathscr{T}(t,
\alpha))$, and families $\mathscr{U}(t\hat{\,}i, \alpha)$, where $
i \in \omega$, of clopen subsets of $X$ such that

\begin{enumerate}[\upshape ({a}1)]
    \item $Z(t, \alpha)$ is nowhere $\sigma LW({<}k) +
      \mathscr{G}_{\delta \sigma}$ and nowhere dense in $X$;
    \item $Z(t, \alpha)$ is a \ndc subset of
        $A \cap \overline{\varphi(T(t, \alpha))}$;
    \item $\varphi(T(t, \alpha)) \subset V(t, \alpha)$ and
      $\overline{Z(t, \alpha)} \subset F_j \bigcap \overline{\varphi
      (T(t, \alpha))} \subset V(t, \alpha)$   for some $j \in \omega$;
    \item $\mathrm{cl}_X(\cup \mathscr{V}(t, \alpha))=
      \overline{Z(t, \alpha)}
      \cup \bigcup \mathscr{V}(t, \alpha)$ and $\overline{Z(t, \alpha)}
      \cap \bigcup \mathscr{V}(t,\alpha)= \emptyset$;
    \item $W(t, \alpha) \subset V(t, \alpha)$, $W(t, \alpha)
       \cap A \neq \emptyset$, and $W(t, \alpha)
       \cap \bigcup \mathscr{U}(t\hat{\,}0, \alpha) = \emptyset$;
    \item for every $i \in \omega$ the family $\{U \cap
       \overline{Z(t, \alpha)}:
       U \in \mathscr{U}(t\hat{\,}i, \alpha)\}$ forms a clopen cover
        of $\overline{Z(t, \alpha)}$ such that
       $\cap \{\cup \mathscr{U}(t\hat{\,}i, \alpha): i \in \omega \} =
       \overline{Z(t, \alpha)}$ and mesh$\mathscr{U}(t\hat{\,}i,
       \alpha) \leq (n +i+1)^{-1}$;
    \item for every $U \in \mathscr{U}(t\hat{\,}(i+1), \alpha))$
       there exists $U_1 \in \mathscr{U} (t\hat{\,} i, \alpha)$
       such that $U \subset U_1$;
    \item for every $U \in \mathscr{U}(t\hat{\,} i, \alpha)$,
       where $i \in \omega$, the set $U \setminus \bigcup \mathscr{U}
       (t\hat{\,}(i+1), \alpha)$ contains a nonempty
       clopen subset  ${\vartriangle }U$ such that
       ${\vartriangle} U \cap \varphi(T(t, \alpha)) \neq \emptyset$
       and $V(t \hat{\,}i, \alpha \hat{\,}\alpha_{n+1}) \subset
       {\vartriangle }U$ for every $\alpha_{n+1} \in k$;
    \item $V(t, \alpha) \cap V(s, \beta) = \emptyset$ whenever
       $(t, \alpha)\neq (s, \beta)$ for $s \in
       \omega^n$ and $ \beta \in k^n$;
    \item diam$(V(t, \alpha)) \leq (n+1)^{-1}$ (with respect to the
          metric $\varrho$ on $X$);
    \item diam$(T(t, \alpha)) \leq (n+1)^{-1}$ (with respect to the
          metric $d$ on $T$);
    \item $\cup \mathscr{T}(t, \alpha) \subset T(t, \alpha)$;
    \item the family $\{V(t\hat{\,}i, \alpha\hat{\,}\alpha_{n+1}):
      \alpha_{n+1} \in k \}$ is discrete in $X$ for every
       $i \in \omega$;
    \item the family $\{T(t\hat{\,}i, \alpha\hat{\,}\alpha_{n+1}):
      \alpha_{n+1} \in k \}$ is discrete in $T$ for
      every $i \in \omega$;
   \item cl$_X(\cup \mathscr{R}(t, \alpha))= \overline{Z(t, \alpha)}
     \cup \bigcup \{\overline{\varphi(T(t \hat{\,}i, \alpha\hat{\,}
     \alpha_{n+1})}:  i \in \omega,  \alpha_{n+1} \in k \}$.
\end{enumerate}

Let $V(\Lambda, \Lambda) = X$ and $T(\Lambda, \Lambda) = T$. By
Claim~\ref{c1}, the set $A \cap \overline{\varphi(T(\Lambda,
\Lambda))} $ is not $\sigma LW({<}k) + \mathscr{G}_{\delta
\sigma}$. Since
$$A \cap \overline{\varphi(T)}  = \cup \{A \cap \overline{\varphi(T)}
\cap F_i : i \in \omega \},$$ there exists a $j \in \omega$ such
that the set $A \cap \overline{\varphi(T)} \cap  F_j $ is not
$\sigma LW({<}k) + \mathscr{G}_{\delta \sigma}$. By
Lemma~\ref{hg}, the set $A \cap \overline{\varphi(T)} \cap  F_j $
contains a nowhere dense closed subset $Z(\Lambda, \Lambda)$ such
that $Z(\Lambda, \Lambda)$ is nowhere $\sigma LW({<}k) +
\mathscr{G}_{\delta \sigma}$. Clearly, $Z(\Lambda, \Lambda)$ is a
weight-homogeneous space of weight $k$. Choose a nonempty clopen
set $W(\Lambda, \Lambda) \subset X \setminus \overline{Z(\Lambda,
\Lambda)}$ with $W(\Lambda, \Lambda) \cap A \neq \emptyset$.

Since Ind$X = 0$, there exists a retraction $r: (V(\Lambda,
\Lambda)\setminus W(\Lambda, \Lambda)) \rightarrow
\overline{Z(\Lambda, \Lambda)}$. Let $\{O_i: i \in \omega \}$ be a
decreasing sequence of clopen sets such that $\overline{Z(\Lambda,
\Lambda)} \subset O_i$ and $\varrho(x, \overline{Z(\Lambda,
\Lambda)}) < (i+1)^{-1}$ for each point $x \in O_i$. Consider  a
sequence $\{\tau_i: i \in \omega \}$  of clopen (in
$\overline{Z(\Lambda, \Lambda)}$) covers of $\overline{Z(\Lambda,
\Lambda)}$ such that mesh$(\tau_i) < (i+1)^{-1}$, $\tau_{i+1}$
refines $\tau_i$, and $|\tau_i| = k$. For every $i \in \omega$ the
clopen family $\mathscr{U}(i, \Lambda) = \{O_i \cap r^{-1}(B): B
\in \tau_i \}$ is discrete in $X$ and mesh$\mathscr{U}(i, \Lambda)
< (i+1)^{-1}$. Since $Z(\Lambda, \Lambda)$ is a nowhere dense
subset of $\overline{\varphi(T(\Lambda, \Lambda))}$, the
intersection $U \cap \left( \varphi(T(\Lambda, \Lambda))\setminus
\overline{Z(\Lambda, \Lambda)} \right)$ is nonempty for each $U
\in \mathscr{U}(i, \Lambda)$. Without loss of generality we may
assume that, for all $i \in \omega$ and $U \in \mathscr{U}(i,
\Lambda)$, the set $U \setminus \bigcup \mathscr{U}(i+1, \Lambda)
= U {\setminus} O_{i+1}$ contains a nonempty clopen subset
${\vartriangle }U$ such that ${\vartriangle }U \cap
\varphi(T(\Lambda, \Lambda)) \neq \emptyset$. The set
$\varphi(T(\Lambda, \Lambda))$ is not locally of weight ${<}k$.
Then there exists a discrete family $\mathscr{V}_{{\vartriangle
}U}$ such that $\cup \mathscr{V}_{{\vartriangle }U} \subset
{\vartriangle }U$,
 $\mathscr{V}_{{\vartriangle }U}$ consists of $k$ clopen sets, and
$V \cap \varphi(T(\Lambda, \Lambda)) \neq \emptyset$ for each $V
\in \mathscr{V}_{{\vartriangle }U}$. Since the family
$\mathscr{U}(i, \Lambda)$ is discrete in $X$ for fixed $i$, the
family $\mathscr{V}_i = \{V: V \in \mathscr{V}_{{\vartriangle }U}$
for some $U \in \mathscr{U}(i, \Lambda)\}$ is discrete too. Let
$\mathscr{V}_i = \{V(i, \alpha) : \alpha \in k \}$. Clearly, $V(i,
\alpha) \cap V(j, \beta) = \emptyset$ whenever $(i, \alpha) \neq
(j, \beta)$.  Put $\mathscr{V}(\Lambda, \Lambda) = \{V(i, \alpha)
:  i \in \omega, \alpha \in k\}$.

Let us show that (a4) is satisfied. By construction, for each $i
\in \omega$ and every $V \in \mathscr{V}_i$ we have $V \cap
\bigcup \mathscr{U}(i+1, \Lambda) = \emptyset$. This implies that
$V \cap \overline{Z(\Lambda, \Lambda)} = \emptyset$. Then $
\overline{Z(\Lambda, \Lambda)}  \cap \bigcup\mathscr{V}(\Lambda,
\Lambda)= \emptyset$. From (a6) and (a8) it follows that
$\overline{Z(\Lambda, \Lambda)} \subset
\overline{\cup\mathscr{V}(\Lambda, \Lambda)}$. Now, consider a
point $x \in \mbox{cl}_X( \cup \mathscr{V}(\Lambda, \Lambda))
\setminus \overline{Z(\Lambda, \Lambda)}$. Choose the smallest
number $j \in \omega$ such that $x \notin \cup \mathscr{U}(j+1,
\Lambda)$. Then $x \in {\vartriangle}U$ for some $U \in
\mathscr{U}(j, \Lambda)$. The family $\mathscr{V}_j$ is discrete
and locally closed; therefore, $x \in {\vartriangle}U \cap \bigcup
\mathscr{V}_j $. So, $x \in \cup \mathscr{V}(\Lambda, \Lambda)$.

We can find a nonempty clopen set $T(i, \alpha) \subset
\varphi^{-1}(V(i, \alpha)) \subset T$ of diameter less than 1. Put
$\mathscr{T}(\Lambda, \Lambda) = \{T(i, \alpha): i \in \omega,
\alpha \in k\}$ and $\mathscr{R}(\Lambda, \Lambda) =
\varphi(\mathscr{T}(\Lambda, \Lambda))$.

Clearly, (a1)--(a15) are satisfied for $n = 0$.

Now suppose that $Z(t, \alpha)$, $V(t, \alpha)$, $\mathscr{V}(t,
\alpha)$,  $T(t, \alpha)$, $\mathscr{T}(t, \alpha)$,
$\mathscr{R}(t, \alpha)$, and $\mathscr{U}(t\hat{\,}i, \alpha)$,
where $i \in\omega $,  have been defined for any $t \in \omega^m$
and $\alpha \in k^m$ with $m \leq n$. Fix $t \in \omega^n$ and
$\alpha \in k^n$. By Claim~\ref{c1} the set $A \cap
\overline{\varphi(T(t, \alpha))}$ is not $\sigma LW({<}k) +
\mathscr{G}_{\delta \sigma}$. Since
$$A \cap \overline{\varphi(T(t, \alpha))} = \cup
\{A \cap \overline{\varphi(T(t, \alpha))}  \cap F_i : i \in \omega
\},$$ there exists a $j \in \omega$ such that the set $A \cap
\overline{\varphi(T(t, \alpha))} \cap F_j $ is not $\sigma
LW({<}k) + \mathscr{G}_{\delta \sigma}$. By Lemma~\ref{hg}, it
contains  a \ndc subset $Z(t, \alpha)$ such that $Z(t, \alpha)$ is
nowhere $\sigma LW({<}k) + \mathscr{G}_{\delta \sigma}$ and $Z(t,
\alpha) \in \mathscr{E}_k$. Choose a nonempty clopen set $W(t,
\alpha) \subset V(t, \alpha) \setminus \overline{Z(t, \alpha)}$
with $W(t, \alpha) \cap A \neq \emptyset$.

Take a retraction $r: (V(t, \alpha)\setminus W(t, \alpha))
\rightarrow \overline{Z(t, \alpha)}$. Let $\{O_i: i \in \omega \}$
be a decreasing sequence of clopen sets such that $\overline{Z(t,
\alpha)} \subset O_i$ and $\varrho(x, \overline{Z(t, \alpha)}) <
(n+i+1)^{-1}$ for each point $x \in O_i$. Consider a sequence
$\{\tau_i: i \in \omega \}$  of clopen covers of $\overline{Z(t,
\alpha)}$ such that mesh$(\tau_i) < (n+i+1)^{-1}$, $\tau_{i+1}$
refines $\tau_i$, and $|\tau_i| = k$. For every $i \in \omega$ the
clopen family $\mathscr{U}(t\hat{\,}i, \alpha) = \{O_i \cap
r^{-1}(B): B \in \tau_i \}$ is discrete in $X$. By construction,
 mesh$\mathscr{U}(t\hat{\,}i, \alpha) < (n+i+1)^{-1}$. As
above, we may assume that (a8) holds, i. e., for all $i \in
\omega$ and $U \in \mathscr{U}(t\hat{\,}i, \alpha)$, the clopen
set ${\vartriangle }U$ contains a discrete family
$\mathscr{V}_{{\vartriangle }U}$ such that $\cup
\mathscr{V}_{{\vartriangle }U} \subset {\vartriangle }U$,
$\mathscr{V}_{{\vartriangle }U}$ consists of $k$ clopen sets, and
$V \cap \varphi(T(t, \alpha)) \neq \emptyset$ for each $V \in
\mathscr{V}_{{\vartriangle }U}$. The family $\mathscr{V}_i = \{V:
V \in \mathscr{V}_{{\vartriangle }U}$ for some $U \in
\mathscr{U}(t\hat{\,}i, \alpha)\}$ is discrete; reindex this
family as $\mathscr{V}_i = \{V(t\hat{\,}i, \alpha \hat{\,}
\alpha_{n+1}) : \alpha_{n+1} \in k \}$. Put $\mathscr{V}(t,
\alpha) = \{V(t\hat{\,}i, \alpha \hat{\,}\alpha_{n+1}) : i \in
\omega, \alpha_{n+1} \in k\}$.

Take a clopen set $T(t\hat{\,}i, \alpha \hat{\,}\alpha_{n+1})
\subset \varphi^{-1}(V(t\hat{\,}i, \alpha \hat{\,}\alpha_{n+1}))$
of diameter less than $(n+1)^{-1}$. Put $\mathscr{T}(t, \alpha) =
\{T(t\hat{\,}i, \alpha\hat{\,}\alpha_{n+1}): i \in \omega,
\alpha_{n+1} \in k\}$ and $\mathscr{R}(t, \alpha) =
\varphi(\mathscr{T}(t, \alpha))$.

All conditions (a1)--(a15) are satisfied. This completes the
induction.

For each $n \in \omega$ we define sets
$$\begin{array}{l}
Y_n = \cup \{Z(t, \alpha): t \in \omega^i, \alpha \in k^i, i \leq
n \},   \\
Z_n = \cup \{\overline{Z(t, \alpha)}: t \in \omega^i, \alpha \in
k^i,  i \leq n\},  \\
P_n = \cup \{\overline{\varphi(T(t, \alpha))}: t \in \omega^n,
\alpha \in k^n \},
\end{array}$$
and $M_n = Z_n \cup P_{n+1} $. Let $Y = \cup\{Y_n : n \in \omega
\}$, $Z = \cup \{Z_n : n \in \omega \}$, and $M = \cap \{M_n : n
\in \omega \}$. Clearly, $Y \subset Z \subset M$. By (a2), $Y
\subset A$.

\begin{cl}\label{c2}
$Y$ is of first category and $Y$ is nowhere $\sigma LW({<}k) +
\mathscr{G}_{\delta \sigma}$.
\end{cl}

Indeed, from (a3) and (a4) it follows that $Z_n$ is a \ndc subset
of $Z_{n+1}$ for every $n \in \omega$. Then each $Z_n$ is a
nowhere dense subset of $Z$. Hence, $Z$ is of first category.
Therefore, $Y$ is of first category as a dense subset of the space
$Z$ of first category. Suppose that an open (in $Y$) set $U
\subset Y$ is $\sigma LW({<}k) + \mathscr{G}_{\delta \sigma}$. Let
us find the smallest number $n$ such that $U \cap (Y_n \setminus
Y_{n-1}) \neq \emptyset$. Then for some $t \in \omega^n$ and $
\alpha \in k^n$ the set $U \cap Z(t, \alpha)$ is $\sigma LW({<}k)
+ \mathscr{G}_{\delta \sigma}$, contradicting (a1). Hence, $Y$ is
nowhere $\sigma LW({<}k) + \mathscr{G}_{\delta \sigma}$.
Claim~\ref{c2} is verified. \hfill $\lozenge$

\begin{cl}\label{cl3}
$\overline{Y} = M \subset F$ and $Y = A \cap M$.
\end{cl}

First,  we show by induction that each $M_n$ is closed in $X$. The
set $M_0$ is closed by (a15) with $n = 0$. Suppose $M_{n-1}$ is
closed. Since $M_n \subset M_{n-1}$, we have  $\overline{M_n}
\subset M_{n-1}$. Take a point $x \in \overline{M_n}$. Clearly, if
$x \in Z_n$, then $x \in Z_n \subset M_n$. Let $x \in
\overline{M_n} \setminus Z_n$. Then $x \in \overline{P_{n+1}}$. On
the other hand, $$x \in \overline{\varphi(T(t, \alpha))} \subset
P_n \subset M_{n-1}$$ for some $t \in \omega^n$ and $ \alpha \in
k^n$. From (a12), (a3), (a9), and (a15) it follows that
$$\overline{\varphi(T(t, \alpha))} \cap \overline{P_{n+1}} =
\overline{Z(t, \alpha)} \cup \bigcup \{\overline{\varphi(T(t
\hat{\,}i, \alpha\hat{\,} \alpha_{n+1})}:  i \in \omega,
\alpha_{n+1} \in k \}.$$ The family $\{\overline{\varphi(T(t
\hat{\,}i, \alpha\hat{\,} \alpha_{n+1})}:  i \in \omega,
\alpha_{n+1} \in k \}$ is disjoint. Hence, for some $i \in \omega$
and $\alpha_{n+1} \in k$ we obtain $x \in \overline{\varphi(T((t
\hat{\,}i, \alpha\hat{\,} \alpha_{n+1})} \subset M_n$. So,
 $\overline{M_n} = M_n$.

Thus, $M = \cap \{M_n : n \in \omega \}$ is closed in $X$.

From (a3) and (a10) it follows that $Y$ is dense in $M$. Hence,
$\overline{Y} = M$.

To prove $Y = A \cap M$ it suffices to show that $ M \setminus Y
\subset F \setminus A$. Take a point $x \in M \setminus Y$. If $x
\in \overline{Z(t, \alpha)}$ for some $t \in \omega^n, \alpha \in
k^n$, from (a2) and (a3) it follows that $x \in F \setminus A$.
Consider the case $x \in M \setminus Z$. As above, for every $n
\in \omega$ there exist $t(n) \in \omega^n$ and $\alpha(n) \in
k^n$ such that $x \in \overline{\varphi(T(t(n), \alpha(n)))}$.
Using (a12) we can find $\chi \in \omega^\omega$ and $\xi \in
k^\omega$ such that $\chi{\upharpoonright}n = t(n)$ and
$\xi{\upharpoonright}n = \alpha(n)$ for every $n \in \omega$.
Then, by the Cantor theorem (see~\cite[Theorem 4.3.8]{Eng}), we
obtain that the intersection $\cap \{\overline{\varphi(T
(\chi{\upharpoonright}n, \xi{\upharpoonright}n))}: n \in \omega
\}$ consists of a one point $x$. Similarly, $ \cap \{T
(\chi{\upharpoonright}n, \xi{\upharpoonright}n) : n \in \omega\} =
\{c \}$. The mapping $\varphi$ is continuous. Therefore,
$\varphi(c) = x \in F \setminus A$. Thus, $ M \setminus Y \subset
F \setminus A$. Claim~\ref{cl3} is proved. \hfill $\lozenge$

Let us verify that $Y$ is nowhere dense in $A$. Take a point $y
\in Y$; then $y \in Z(t, \alpha)$ for some $t \in \omega^n$ and $
\alpha \in k^n$. For every $\varepsilon > 0$ there exists $i \in
\omega$ such that $(n+i+1)^{-1} < \varepsilon$. From (a5) it
follows that the $\varepsilon$-ball about $y$ contains the
nonempty open set $A \cap W(t \hat{\,}i, \alpha\hat{\,}
\alpha_{n+1})$ for some $\alpha_{n+1} \in k$. By construction,
$W(t \hat{\,}i, \alpha\hat{\,} \alpha_{n+1}) \cap Y = \emptyset$;
hence, $Y$ is nowhere dense in $A$.

Theorem~\ref{hf} is proved.
\end{proof}

\begin{lem}\label{as}
Let $k > \omega$. Let $F$ be an $F_\sigma$-subset of the Baire
space $B(k)$ and $A \subset F$. Suppose $B(k) \setminus A$ is a
Souslin set in $B(k)$, $A$ is of first category, and $A$ is
nowhere $\sigma LW({<}k) + \mathscr{G}_{\delta \sigma}$. Then for
every $\varepsilon > 0$ there exists a family $\mathscr{A} =
\{A(i, \alpha): i \in \omega, \alpha \in k\}$ of subsets of $A$
such that
\begin{enumerate}[\upshape (b1)]
    \item $A = \cup\mathscr{A}$ and $\mathrm{mesh}\mathscr{A}
    < \varepsilon$;
    \item each $A(i, \alpha)$ is of  first category and
      nowhere $\sigma LW({<}k) + \mathscr{G}_{\delta \sigma}$;
    \item each $A(i, \alpha)$ is a \ndc subset of $A$ and
       $\overline{A(i, \alpha)} \subset  F$;
    \item $\overline{A(i, \alpha)} \bigcap \overline{A(j, \beta)} =
       \emptyset$ whenever $(i, \alpha) \neq (j, \beta)$;
    \item for every $i \in \omega$ the family $\{A(i, \alpha):
       \alpha \in k\}$ is discrete in $B(k)$.
\end{enumerate}

The bar denotes the closure in $B(k)$.
\end{lem}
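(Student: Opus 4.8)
The plan is to use Theorem~\ref{hf} as the engine that manufactures, inside any prescribed clopen region meeting $A$, a \ndc (in $A$) piece that is nowhere $\sigma LW({<}k) + \mathscr{G}_{\delta \sigma}$, and then to assemble countably many discrete families of such pieces so that they exhaust $A$. First come the reductions. Since $A$ is nowhere $\sigma LW({<}k) + \mathscr{G}_{\delta \sigma}$, no nonempty open subset of $A$ is locally of weight ${<}k$; hence every point of $A$ is heavy (every neighborhood has weight $k$), $A$ is weight-homogeneous of weight $k$, and every nonempty relatively clopen set $A\cap W$ (with $W$ clopen in $B(k)$) again satisfies all the hypotheses of the lemma inside $W\approx B(k)$: it is co-Souslin in $W$, of first category, and nowhere $\sigma LW({<}k)+\mathscr{G}_{\delta\sigma}$. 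Because $A\subseteq F=\cup_m F_m$ with $F_m$ closed in $B(k)$, and because each \ndc subset $K$ of $A$ satisfies $A\cap\overline{K}=K$, I fix a decomposition $A=\cup_{n\in\omega}K_n$ into \ndc subsets of $A$ with the extra property $\overline{K_n}\subseteq F$ (take a first-category decomposition, split each member along the $F_m$, and reindex).

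Next, the engine. For a basic clopen $\varepsilon$-ball $W=B_t(k)\approx B(k)$ meeting $A$, the set $A\cap W$ is open in $A$ and so satisfies the hypotheses of Theorem~\ref{hf}; applying it to $A\cap W\subseteq F\cap W$ produces $N=A\cap M$ with $M\subseteq F\cap W$ closed, where $N$ is a \ndc subset of $A$, nowhere $\sigma LW({<}k)+\mathscr{G}_{\delta\sigma}$, of first category, with $\overline{N}\subseteq M\subseteq F\cap W$ and $A\cap\overline{N}=N$ (there is no leakage: the $B(k)$-closure of $N$ meets $A$ only in $N$). Since every point of $A$ is heavy, weight-homogeneity supplies, inside any clopen region meeting $A$ and avoiding a prescribed closed set, a discrete family of exactly $k$ clopen $\varepsilon$-balls each meeting $A$; this is the source of the $\alpha\in k$ indexing.

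Now the level recursion. Build the levels $i=0,1,2,\dots$ in turn. Writing $C_{<i}=\cup_{j<i}\cup_{\alpha}\overline{A(j,\alpha)}$ (a closed set, by within-level discreteness and the finiteness of $j<i$), at level $i$ I cover the relatively open set $K_i\setminus C_{<i}$ by a discrete family $\{V(i,\alpha):\alpha\in k\}$ of pairwise disjoint clopen $\varepsilon$-balls lying in the open set $B(k)\setminus C_{<i}$, each meeting $A$ (padding with extra regions meeting $A$ to reach exactly $k$). In $V(i,\alpha)$ I run the engine on $A\cap V(i,\alpha)$ and absorb the prescribed \ndc set $K(i,\alpha)=(K_i\setminus C_{<i})\cap V(i,\alpha)$ into the nice piece, obtaining $A(i,\alpha)$. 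Because the regions lie in $B(k)\setminus C_{<i}$, are pairwise disjoint and discrete, and have diameter ${<}\varepsilon$, the closures $\overline{A(i,\alpha)}$ are pairwise disjoint across all levels (this is (b4)), each level is discrete (this is (b5)), and the mesh clause of (b1) holds; each $A(i,\alpha)$ is a \ndc subset of $A$, nowhere $\sigma LW({<}k)+\mathscr{G}_{\delta\sigma}$ and of first category with $\overline{A(i,\alpha)}\subseteq F$ (this is (b2) and (b3)). For coverage, at the end of level $i$ the set $K_i$ is entirely covered: its part in $C_{<i}$ lies, as points of $A$, in earlier pieces (since the no-leakage property gives $A\cap C_{<i}=\cup_{j<i}\cup_{\alpha}A(j,\alpha)$), and its remaining part $K_i\setminus C_{<i}$ is absorbed at level $i$; hence $A=\cup_n K_n\subseteq\cup\mathscr{A}\subseteq A$, which is (b1).

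The main obstacle is the absorption step. The covering piece $A(i,\alpha)$ must be nowhere $\sigma LW({<}k)+\mathscr{G}_{\delta\sigma}$ and yet contain the prescribed \ndc set $K(i,\alpha)$, which itself may be $\sigma LW({<}k)+\mathscr{G}_{\delta\sigma}$ (for instance locally of small weight). Writing $A(i,\alpha)=K(i,\alpha)\cup N$ with $N$ a nice piece from the engine keeps the union \ndc in $A$, with closure in $F$ and no leakage, and the union remains nowhere $\sigma LW({<}k)+\mathscr{G}_{\delta\sigma}$ exactly when $K(i,\alpha)$ is nowhere dense in it, i.e.\ when $K(i,\alpha)\subseteq\overline{N}$: any nonempty relatively open $O\subseteq A(i,\alpha)$ then meets $N$, and an open subspace of the nowhere-$\sigma LW({<}k)+\mathscr{G}_{\delta\sigma}$ space $N$ cannot be $\sigma LW({<}k)+\mathscr{G}_{\delta\sigma}$, so $O$ is not either. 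Thus the real work is to strengthen the output of Theorem~\ref{hf} so that the nowhere-dense piece $N$ accumulates onto the prescribed set ($\overline{N}\supseteq K(i,\alpha)$) while retaining all its properties; I expect to arrange this by running the inductive scheme of Theorem~\ref{hf} along a clopen net shrinking onto $\overline{K(i,\alpha)}$, and it is here that the bookkeeping is heaviest.
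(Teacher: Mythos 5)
Your skeleton is the paper's own: decompose $A=\bigcup_i C_i$ into \ndc subsets of $A$ with $\overline{C_i}\subset F$, use Theorem~\ref{hf} as the engine producing nowhere $\sigma LW({<}k)+\mathscr{G}_{\delta\sigma}$ \ndc pieces, force the engine outputs to accumulate onto the prescribed piece so that the union stays nowhere $\sigma LW({<}k)+\mathscr{G}_{\delta\sigma}$, and finish with an $\varepsilon$-fine decomposition indexed by $\omega\times k$. The step you defer as ``heaviest bookkeeping'' --- arranging $K(i,\alpha)\subseteq\overline{N}$ --- is indeed where the paper's work lies, but it is discharged more cheaply than you anticipate: there is no need to reopen the internal induction of Theorem~\ref{hf}. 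The paper takes a retraction $r:\overline{A}\rightarrow\overline{C_i}$, clopen sets $O_j\downarrow\overline{C_i}$, and refining clopen covers $\tau_j$ of $\overline{C_i}$ with mesh tending to $0$; the cells $O_j\cap r^{-1}(B)$, $B\in\tau_j$, form discrete families clustering onto $\overline{C_i}$, and Theorem~\ref{hf} is invoked as a black box once in each annular piece ${\vartriangle}U\subset U\setminus\bigcup\mathscr{U}_{j+1}$. Every neighborhood of every point of $C_i$ then contains a whole output $Y_U$, so $C_i$ is nowhere dense in $B_i=C_i\cup\bigcup_U Y_U$, which is exactly your sufficient criterion for $B_i$ to be nowhere $\sigma LW({<}k)+\mathscr{G}_{\delta\sigma}$.

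One step of your level recursion fails as literally stated. You cover $K_i\setminus C_{<i}$ by a \emph{single discrete} family of clopen (in $B(k)$) sets lying in $B(k)\setminus C_{<i}$. The union of such a family is clopen, hence closed; so if $\overline{K_i\setminus C_{<i}}$ meets the closed set $C_{<i}$ --- which is the typical situation, since $K_i\cap C_{<i}$ need not be relatively clopen in $K_i$ --- no such cover exists. This is why the paper fattens first and chops second: it covers $\overline{B_i}\setminus\bigcup_{j<i}\overline{B_j}$ by a disjoint \emph{$\sigma$-discrete} family $\gamma_i$ of sets clopen only in $\overline{B_i}$ (not in $B(k)$), then reindexes the whole assembly $\gamma=\bigcup_i\gamma_i$ as $\{V(i,\alpha):i\in\omega,\ \alpha\in k_i\}$ so that each final level is discrete, padding a level with $k_i<k$ by splitting one cell into $k$ clopen pieces (possible since $k>\omega$ and the cells are nowhere of weight ${<}k$). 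Your construction goes through once you replace the single discrete family per $K_i$ by a $\sigma$-discrete one and absorb the extra countable index into the final $\omega\times k$ reindexing; with that repair and the retraction device above, your argument coincides with the paper's.
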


\begin{proof}
Let $F = \bigcup \{F_j: j \in \omega \}$, where each $F_j$ is
closed in $B(k)$. As a space of first category, $A = \bigcup
\{H_m: m \in \omega \}$, where each $H_m$ is a nowhere dense
closed subset of $A$. Take the nonempty intersections $F_j \cap
H_m$ and reindex these sets  as $\{C_i: i \in \omega \}$. Then $A
= \bigcup \{C_i: i \in \omega \}$, where each $C_i$ is a nowhere
dense closed subset of $A$, $C_i = \overline{C_i} \cap A$, and
$\overline{C_i} \subset F$.

Denote by $\varrho$ the standard complete metric on $B(k)$. Fix $i
\in \omega$. Take a retraction $r: \overline{A} \rightarrow
\overline{C_i}$. Let $\{O_j: j \in \omega \}$ be a decreasing
sequence of clopen sets from $B(k)$ such that  $\overline{C_i}
\subset O_j$ and $\varrho(x, \overline{C_i}) < (j+1)^{-1}$ for
each point $x \in O_j$. Consider a sequence $\{\tau_m: m \in
\omega \}$ of clopen (in $\overline{C_i}$) covers of
$\overline{C_i}$ such that mesh$(\tau_m) < (m+1)^{-1}$ and
$\tau_{m+1}$ refines $\tau_m$. For every $j \in \omega$ the family
$\mathscr{U}_j = \{O_j \cap r^{-1}(B): B \in \tau_j \}$ is
discrete in $B(k)$ and mesh$\mathscr{U}_j < (j+1)^{-1}$. Without
loss of generality we may assume that, for all $j \in \omega$ and
$U \in \mathscr{U}_j$, the set $U \setminus \bigcup
\mathscr{U}_{j+1}$ contains a clopen (in $\overline{A}$) subset
${\vartriangle }U$ with ${\vartriangle }U \cap A \neq\emptyset$.
Let $\mathscr{U} = \{U \in \mathscr{U}_j: j \in \omega \}$. By
Theorem~\ref{hf} for each $U \in \mathscr{U}$ there exists a set
$Y_U \subset {\vartriangle }U$ such that $Y_U$ is nowhere $\sigma
LW({<}k) + \mathscr{G}_{\delta \sigma}$, $Y_U$ is of first
category, $Y_U$ is a \ndc subset of $A$, and $\overline{Y_U}
\subset F$. Now, put $B_i = C_i \cup (\bigcup \{Y_U: U \in
\mathscr{U} \})$.

One easily verifies that $B_i$ is nowhere $\sigma LW({<}k) +
\mathscr{G}_{\delta \sigma}$, $B_i$ is of first category, $B_i$ is
 a \ndc subset of $A$, and $\overline{B_i} = \overline{C_i} \cup
\bigcup \{\overline{Y_U}: U \in \mathscr{U} \} \subset F$.

Clearly, $A = \cup \{B_i: i \in \omega \}$.

Let $\gamma_0$ be a discrete cover  of $\overline{B_0}$ by clopen
(in $\overline{B_0}$) sets such that mesh($\gamma_0) <
\varepsilon$. For each $i > 0$ the set $\overline{B_i} \setminus
\bigcup \{\overline{B_j}: j < i \}$ is open in $\overline{B_i}$;
it may be empty. Take a disjoint $\sigma$-discrete (in $B(k)$)
cover $\gamma_i$ of $\overline{B_i} \setminus \bigcup
\{\overline{B_j}: j < i \}$ by clopen (in $\overline{B_i}$) sets
such that mesh($\gamma_i) < \varepsilon$. Then the family $\gamma
= \{V \in \gamma_i: i \in \omega \}$ is $\sigma$-discrete in
$B(k)$. It consists of pairwise disjoint closed subsets of
diameter less than $\varepsilon$. Hence, $\gamma = \{ V(i,
\alpha): i \in \omega , \alpha \in k_i\}$, where each $k_i \leq k$
and each family $\{V(i, \alpha): \alpha \in k_i\}$ is discrete in
$B(k)$.

If $k_i < k$ for some $i$, we take a set $V(i, \alpha)$ and
separate $V(i, \alpha)$ into $k$ nonempty disjoint clopen (in
$V(i, \alpha)$) subsets. This can be done, because $k > \omega$
and $V(i, \alpha)$ is nowhere of weight ${<}k$. So, we may assume
that each $k_i = k$.

The sets $A(i, \alpha) = A \cap V(i, \alpha)$ form the required
family $\mathscr{A}$.
\end{proof}

\begin{lem}\label{ad}
 Let $k > \omega$. Let $A$ be an $F_{\sigma \delta}$-subset
  of the Baire space $B(k)$ such that $A$ is of first category
 and $A$ is nowhere $\sigma LW({<}k) + \mathscr{G}_{\delta \sigma}$.
  There exists a family
  $\{A(t, \alpha): t \in \omega^n$, $\alpha \in k^n, n \in \omega\}$
  of nonempty $F_{\sigma \delta}$-subsets of $B(k)$ such that
\begin{enumerate}[\upshape (c1)]
    \item $A = A(\Lambda, \Lambda)$, $ A(t, \alpha) =
       \bigcup \{A(t\hat{\,}i, \alpha\hat{\,} \alpha_{n+1}): i
       \in \omega, \alpha_{n+1} \in k \}$, and each $A(t\hat{\,}i,
       \alpha\hat{\,} \alpha_{n+1})$ is a \ndc  subset of
       $A(t, \alpha)$;
    \item each $A(t, \alpha)$ is of  first category and
      nowhere $\sigma LW({<}k) + \mathscr{G}_{\delta \sigma}$;
    \item the family $\{A(t\hat{\,}i, \alpha\hat{\,} \alpha_{n+1}):
       \alpha_{n+1} \in k \}$ is discrete in $B(k)$ for each $i
       \in \omega$;
    \item $\overline{A(t, \alpha)} \cap \overline{A(s, \beta)} =
       \emptyset$ whenever $(t, \alpha) \neq (s, \beta)$, if
       $s \in \omega^n$ and $\beta \in k^n$;
    \item $\overline{A(t, \alpha)}$ is  homeomorphic to $B(k)$;
    \item \textrm{\rm diam}$(A(t, \alpha)) < (n+1)^{-1}$;
    \item for all $\chi \in \omega^\omega$ and $\xi \in k^\omega$
      the intersection $\bigcap \{ A(\chi{\upharpoonright}i,
       \xi{\upharpoonright}i): i \in \omega \}$ is a one-point
       subset of $A$.
\end{enumerate}

By the bar we denote the closure in $B(k)$.
\end{lem}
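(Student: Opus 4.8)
The plan is to build the whole tree $\{A(t,\alpha)\}$ by recursion on $n=\mathrm{lh}(t)$, feeding each node into Lemma~\ref{as} to manufacture its immediate successors. First I would fix a decreasing representation $A=\bigcap_{n\in\omega}G_n$ with each $G_n\in\mathscr{F}_\sigma(B(k))$ and $G_0\supseteq G_1\supseteq\cdots$, which exists because $A$ is $F_{\sigma\delta}$. Put $A(\Lambda,\Lambda)=A$. As an invariant at every node I would carry: $A(t,\alpha)$ is a nonempty $F_{\sigma\delta}$-subset of $B(k)$, of first category, nowhere $\sigma LW({<}k)+\mathscr{G}_{\delta\sigma}$, and \emph{relatively closed in} $A$ (so that $A(t,\alpha)=A\cap\overline{A(t,\alpha)}$); for $n\geq1$ I would additionally demand $\overline{A(t,\alpha)}\subseteq G_n$. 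Each such $A(t,\alpha)=A\cap\overline{A(t,\alpha)}$ is $F_{\sigma\delta}$ (intersection of $A$ with a closed set), so $B(k)\setminus A(t,\alpha)$ is $G_{\delta\sigma}$ and hence a Souslin set; since also $A(t,\alpha)\subseteq A\subseteq G_{n+1}$, the hypotheses of Lemma~\ref{as} are met for $A(t,\alpha)$ with the $F_\sigma$-set $F:=G_{n+1}$ and any $\varepsilon>0$.

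Given a node $(t,\alpha)$ at depth $n$, I would apply Lemma~\ref{as} to $A(t,\alpha)$ with $F=G_{n+1}$ and $\varepsilon=(n+2)^{-1}$, and take its output family $\{A(i,\alpha'):i\in\omega,\alpha'\in k\}$ (nonempty by the splitting step in the proof of Lemma~\ref{as}) to be the successors $A(t\hat{\,}i,\alpha\hat{\,}\alpha_{n+1})$. Conditions (b1)--(b5) then translate directly: (b1) yields the covering in (c1) together with the diameter bound (c6) at depth $n+1$; (b3) yields that each successor is a \ndc subset of $A(t,\alpha)$ (the remainder of (c1)) and that $\overline{A(t\hat{\,}i,\alpha\hat{\,}\alpha_{n+1})}\subseteq G_{n+1}$, restoring the invariant; (b2) gives (c2); and (b5) gives (c3). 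Full (c4) follows by induction on $n$: siblings have disjoint closures by (b4), while two same-depth nodes with distinct parents inherit disjoint closures from their parents since $\overline{A(\mathrm{child})}\subseteq\overline{A(\mathrm{parent})}$. Finally, each successor is closed in $A(t,\alpha)$ and $A(t,\alpha)$ is closed in $A$, so the relative-closedness invariant is preserved.

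For (c5) I would deduce $\overline{A(t,\alpha)}\approx B(k)$ from (c2). If a nonempty relatively open subset of $A(t,\alpha)$ had weight ${<}k$, it would be locally of weight ${<}k$, hence in $\sigma LW({<}k)\subseteq\sigma LW({<}k)+\mathscr{G}_{\delta\sigma}$, contradicting (c2); thus $A(t,\alpha)$ is weight-homogeneous of weight $k$. Its closure is then weight-homogeneous of weight $k$ as well, since any nonempty relatively open $W\subseteq\overline{A(t,\alpha)}$ meets the dense set $A(t,\alpha)$ in a nonempty open set of weight $k$, forcing $w(W)=k$. As $\overline{A(t,\alpha)}$ is closed in $B(k)$ it is completely metrizable and strongly zero-dimensional, so by the topological characterization of the Baire space of weight $k$ (a completely metrizable, strongly zero-dimensional, weight-homogeneous space of weight $k$ is homeomorphic to $B(k)$) it is homeomorphic to $B(k)$.

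The remaining and most delicate point is (c7), and it is where the coordination of the layers $G_n$ with the depth of the tree does the real work. Fix $\chi\in\omega^\omega$, $\xi\in k^\omega$ and write $D_i=A(\chi{\upharpoonright}i,\xi{\upharpoonright}i)$. By (c1) the closures $\overline{D_i}$ are nested and nonempty, and by (c6) their diameters tend to $0$, so completeness of $B(k)$ and the Cantor intersection theorem give $\bigcap_i\overline{D_i}=\{x\}$ for a single point $x$. The invariant gives $x\in\overline{D_i}\subseteq G_i$ for every $i\geq1$, whence $x\in\bigcap_{i\geq1}G_i=\bigcap_iG_i=A$ (the $G_i$ being decreasing). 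Since each $D_i$ is closed in $A$, we have $D_i=A\cap\overline{D_i}\ni x$, so $\bigcap_iD_i=\{x\}\subseteq A$, which is precisely (c7). The only loose end is the $n=0$ case of (c6), handled by normalizing the standard metric on $B(k)$ so that $\mathrm{diam}\,B(k)<1$. I expect the main obstacle to be exactly this orchestration of the defining $F_\sigma$-layers $G_n$ along the tree: it is what forces the diagonal limits back into $A$ and turns the recursion built on Lemma~\ref{as} into a genuine Sierpi\'nski-type good representation of the absolute $F_{\sigma\delta}$-set $A$.
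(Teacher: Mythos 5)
Your proposal is correct and follows essentially the same route as the paper: recursively apply Lemma~\ref{as} to each node at depth $n$ with $F$ taken to be the $(n{+}1)$-st $F_\sigma$-layer of $A$ and $\varepsilon=(n+2)^{-1}$, deduce (c5) from the Stone characterization of $B(k)$, and obtain (c7) from the Cantor intersection theorem together with $\overline{A(t,\alpha)}\subseteq F_n$ forcing the diagonal limit point into $\bigcap_n F_n=A$. The extra bookkeeping you supply (relative closedness in $A$, weight-homogeneity of the closures, the $n=0$ diameter normalization) only makes explicit what the paper leaves implicit.
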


\begin{proof}
Let $A = \cap \{F_i: i \in \omega \}$,  where each $F_i$ is an
$F_\sigma$-subset of $B(k)$.

We may assume that $F_0 = \overline{A}$. Put $A(\Lambda, \Lambda)
= A$.  By Lemma~\ref{as} for $\varepsilon =1/2$ and $F = F_1$ we
obtain $A(\Lambda, \Lambda) = \cup \{A(i, \alpha): i \in \omega,
\alpha \in k\}$, where sets $\{A(i, \alpha) \}$ satisfy (b1)--(b5)
and each $\overline{A(i, \alpha)} \subset F_1$. By virtue of the
Stone theorem, each $\overline{A(i, \alpha)}$ is homeomorphic to
$B(k)$. For $\varepsilon = 1/3$ and $F = F_2$ we again apply
Lemma~\ref{as} to each $A(i, \alpha)$. Then $A(i, \alpha) = \cup
\{A(i\hat{\,}j, \alpha \hat{\,} \beta): j \in \omega, \beta \in k
\}$, where each $\overline{A(i\hat{\,}j, \alpha \hat{\,} \beta)}
\subset F_2$, and so forth.

Let us verify (c7). Using (c1), (c6), and the Cantor theorem, we
have $\bigcap \{\overline{ A(\chi{ \upharpoonright}i,
\xi{\upharpoonright}i)}$: $i \in \omega \} = \{x\}$ for a point $x
\in B(k)$. Since $\overline{ A(\chi{ \upharpoonright}i,
\xi{\upharpoonright}i)} \subset F_i$, we obtain that $x \in \cap
\{F_i : i \in \omega \}$. Thus, $x \in A$.
\end{proof}

\section{A topological characterization of $Q(k)^\omega$}

In this Section we show (see Theorem~\ref{tqc}) that all spaces
from the family $\mathscr{X}_k$ are homeomorphic, where
$\mathscr{X}_k = \{X: X \in \mathscr{E}_k$, $X$ is of first
category, $X$ is an absolute $F_{\sigma \delta}$-set and nowhere
$\sigma LW({<}k) + \mathscr{G}_{\delta \sigma} \}$. By
Theorem~\ref{t-qR}, the space $Q(k)^\omega$ belongs to
$\mathscr{X}_k$. Corollary~\ref{c-hko} states that every absolute
$F_{\sigma \delta}$-space of weight ${\leq}k$ is homeomorphic to a
closed subset of $Q(k)^\omega$.

\begin{thm}\label{tqc}
Let $k > \omega$. Let $X_1$ and $X_2$ be dense $F_{\sigma
\delta}$-subsets of the Baire space $B(k)$ such that $X_1$ and
$X_2$ are sets of first category and nowhere $\sigma LW({<}k) +
\mathscr{G}_{\delta \sigma}$. There exists a homeomorphism $f:
B(k) \rightarrow B(k)$ such that $f(X_1) = X_2$.
\end{thm}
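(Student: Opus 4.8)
The plan is to realize both $X_1$ and $X_2$ as branch spaces of Lusin--Baire schemes supplied by Lemma~\ref{ad} and then to build $f$ as the uniform limit of a refining sequence of self-homeomorphisms of $B(k)$, the refinement step being powered by Lemma~\ref{q3}. First I would apply Lemma~\ref{ad} to each $X_j$; since $X_j$ is dense we have $\overline{A_j(\Lambda,\Lambda)}=\overline{X_j}=B(k)$, so both schemes are indexed by the \emph{same} tree $\{(t,\alpha):t\in\omega^n,\alpha\in k^n,n\in\omega\}$. Condition (c7) guarantees that along \emph{every} branch $(\chi,\xi)\in\omega^\omega\times k^\omega$ the intersection is a single point of $X_j$, and (c1) shows conversely that every point of $X_j$ lies on such a branch; hence no genuine back-and-forth matching of the two trees is required --- the identity index correspondence already pairs the schemes, and the only task is to produce an ambient homeomorphism carrying the scheme of $X_1$ onto that of $X_2$.

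Second, I would construct by induction homeomorphisms $f_n\colon B(k)\to B(k)$ subject to the invariants: (i) $f_n\big(\overline{A_1(t,\alpha)}\big)=\overline{A_2(t,\alpha)}$ and $f_n\big(A_1(t,\alpha)\big)=A_2(t,\alpha)$ for every node $(t,\alpha)$ of length $n$; (ii) $f_{n+1}$ agrees with $f_n$ off the union of the level-$n$ closures and reshapes $f_n$ only \emph{inside} each such closure; (iii) $\varrho(f_n,f_{n+1})$ and $\varrho(f_n^{-1},f_{n+1}^{-1})$ are both smaller than a preassigned $\varepsilon_n$ with $\sum_n\varepsilon_n<\infty$, the bound being controlled by the shrinking diameters of (c6). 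Invariants (ii)--(iii) force $f=\lim_n f_n$ and $\lim_n f_n^{-1}$ to exist and to be mutually inverse continuous maps, so $f$ is a homeomorphism of $B(k)$; invariant (i) together with (c6) and (c7) forces $f$ to send the branch point of $(\chi,\xi)$ in the first scheme to the branch point of $(\chi,\xi)$ in the second, whence $f(X_1)=X_2$.

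The heart of the argument is the refinement step producing $f_{n+1}$ from $f_n$ on a single level-$n$ node. Fix $(t,\alpha)$ of length $n$ and write $Y_1=\overline{A_1(t,\alpha)}$, $Y_2=\overline{A_2(t,\alpha)}$; by (c5) both are copies of $B(k)$, hence $h$-homogeneous, and $g:=f_n|Y_1\colon Y_1\to Y_2$ is a homeomorphism with $g(A_1(t,\alpha))=A_2(t,\alpha)$. Each child closure $\overline{A_1(t\hat{\,}i,\alpha\hat{\,}\alpha_{n+1})}$ is a \ndc subset of $Y_1$: it is nowhere dense because $A_1(t\hat{\,}i,\alpha\hat{\,}\alpha_{n+1})$ is, by (c1), nowhere dense in the dense set $A_1(t,\alpha)\subset Y_1$. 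This is exactly the configuration of Lemma~\ref{q3}, which I would invoke to re-route $g$ so that it carries $\overline{A_1(t\hat{\,}i,\ldots)}$ onto $\overline{A_2(t\hat{\,}i,\ldots)}$ (and the corresponding $A$-pieces onto one another) while keeping control of both the map and its inverse. The discreteness of each family $\{A_1(t\hat{\,}i,\alpha\hat{\,}\alpha_{n+1}):\alpha_{n+1}\in k\}$ in (c3) lets me enclose, for fixed $i$, the $k$-many children in a discrete clopen family of $Y_1$ and treat them in parallel, and (c6) makes the successive adjustments uniformly small.

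The step I expect to be the main obstacle is matching \emph{all} children of a node simultaneously: the union $\bigcup_{i,\alpha_{n+1}}\overline{A_1(t\hat{\,}i,\ldots)}$ is \emph{dense} in $Y_1$ (it contains $A_1(t,\alpha)$), so it is far from \ndc and Lemma~\ref{q3} cannot be applied to it in one stroke. The fix is to install the prescribed matching one discrete layer at a time --- first on the clopen hulls provided by (c3), then, inside each hull, on the single child which is \ndc there --- and to keep every adjustment simultaneously small for $f_{n+1}$ and $f_{n+1}^{-1}$; controlling the inverse is precisely what forbids the one-sided Lemma~\ref{q2} and makes the two-sided Lemma~\ref{q3} (built on Lemma~\ref{eh}) indispensable, since only a two-sided Cauchy estimate yields a \emph{homeomorphic} limit. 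The second delicate point is to verify that the residual part $Y_1\setminus\bigcup_{i,\alpha_{n+1}}\overline{A_1(t\hat{\,}i,\ldots)}$ is steered into $B(k)\setminus X_2$, so that in the limit $f$ maps $X_1$ exactly \emph{onto} $X_2$ with nothing extra; here the disjointness of closures (c4) and the branch condition (c7) do the bookkeeping. The uncountability of $k$ --- absent in van Engelen's Cantor/rational prototype --- is what forces reliance on the weight-homogeneity and discreteness conditions (c3), (c5) and on the non-separable extension Lemmas~\ref{eh} and~\ref{q3}.
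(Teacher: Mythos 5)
There is a genuine gap, and it sits exactly where you declare the problem solved: your claim that ``no genuine back-and-forth matching of the two trees is required --- the identity index correspondence already pairs the schemes'' is false, and the failure of this claim is the reason the paper's proof is as long as it is. Lemma~\ref{ad} gives you two schemes $\{X_1(t,\alpha)\}$ and $\{X_2(t,\alpha)\}$ indexed by the same tree, but the \emph{geometric positions} of same-indexed nodes are completely uncorrelated. Your refinement step requires $f_{n+1}$ to carry $\overline{X_1(t\hat{\,}i,\alpha\hat{\,}\alpha_{n+1})}$ onto $\overline{X_2(t\hat{\,}i,\alpha\hat{\,}\alpha_{n+1})}$ while staying $\varepsilon_n$-close to $f_n$; but $f_n\bigl(\overline{X_1(t\hat{\,}i,\alpha\hat{\,}\alpha_{n+1})}\bigr)$ may lie at distance comparable to $\mathrm{diam}\,\overline{X_2(t,\alpha)}$ from $\overline{X_2(t\hat{\,}i,\alpha\hat{\,}\alpha_{n+1})}$, so the required perturbation is not small and the sequence $(f_n)$ is not Cauchy. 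Lemma~\ref{q3} cannot rescue this: its hypothesis is that the prescribed homeomorphism $f_0$ of the nowhere dense closed pieces is already uniformly close to the restriction of an ambient homeomorphism $g$ with $g(A_1)=A_2$, and that closeness is precisely what the identity index correspondence fails to provide. Processing the children ``one discrete layer at a time'' does not help either, since for a fixed level there are infinitely many layers $i\in\omega$ whose clopen hulls overlap, and the adjustments do not become small in $i$.

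The paper's proof avoids this by abandoning the identity correspondence altogether. It constructs a \emph{new} common scheme $\{Z_j(s,\beta)\}$ refining $\{X_j(t,\alpha)\}$, together with index-translation functions $\varphi^i_{n,j}$, $\psi^i_{n,j}$ recording, separately for $j=1$ and $j=2$, which $X_j$-node each $Z_j$-node lives in; the node of the $X_1$-scheme and the node of the $X_2$-scheme attached to a given $(s,\beta)$ are in general different. The choice is made locally: inside a small clopen set $U_1(\gamma)$ and its image $U_2(\gamma)=f_n(U_1(\gamma))$ one takes the \emph{first} $X_1$-child meeting $U_1(\gamma)$ and the \emph{first} $X_2$-child meeting $U_2(\gamma)$, so only a perturbation of size $\mathrm{diam}\,U_j(\gamma)<(2n+2)^{-1}$ is ever needed, and the counting function $\nu$ with conditions (d5)--(d6) guarantees that every node of each $X_j$-tree is eventually absorbed, so that in the limit $f(X_1)=X_2$ and not merely $f(X_1)\subseteq X_2$. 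This Knaster--Reichbach-style bookkeeping is the missing idea; without it (or an equivalent device) your induction cannot be carried out.
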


\begin{proof}
Consider two copies $B_1$ and $B_2$ of $B(k)$ with metrics
$\varrho_1$ and $\varrho_2$, respectively. Let $X_j \subset B_j$
and $\overline{X_j} = B_j$ for $j \in \{1, 2 \}$. Using
Lemma~\ref{ad}, we obtain two families $\{X_j(t, \alpha): t \in
\omega^i$, $\alpha \in k^i, i \in \omega\}$ of nonempty $F_{\sigma
\delta}$-subsets of $B_j$ satisfying (c1)--(c7).

To prove the theorem, for $j \in \{1, 2 \}$, we transform  the
family $\{X_j(t, \alpha) \}$ to a family $\{Z_j(s, \beta) \}$ such
that the family $\{Z_j(s, \beta) \}$ satisfies the conditions
(d1)--(d10) (see below) and $\{Z_j(s, \beta) \}$ refines $\{X_j(t,
\alpha) \}$ if $\mathrm{lh}(s) = \mathrm{lh}(t)$.

For $t \in \omega^i$, where $i \in \omega$, define $\nu(t) = t_0 +
t_1 + \ldots + t_{i-1} + i$. By definition, put $\nu(\Lambda) =
0$.

For $n \in \omega$ let $\omega^i_n = \{t \in \omega^i: \nu(t) = n
\}$. For example, $\omega^0_0 = \{\Lambda\}$, $\omega^1_1 = \{0
\}$,  and  $\omega^2_3 = \{(1,0), \, (0,1) \}$. Clearly, each set
$\omega^i_n$ is finite and $\omega^i_n = \emptyset$ whenever $n <
i$. It is easy to verify that $\omega^i_n \cap \omega^j_m =
\emptyset $ providing $(i, n) \neq (j, m)$.

For $j \in \{1, 2 \}$ and $t \in \omega^i$ put $X^*_j(t) = \bigcup
\{X_j(t, \alpha): \alpha \in k^i \}$.

For $j \in \{1, 2 \}$, $n \in \omega$, and $i \leq n$ we will
construct discrete (in $B_j$) families $\{Z_j(s, \beta)$: $s \in
\omega^i_n$, $\beta \in k^i\}$ of nonempty \ndc subsets of $X_j$,
sets $Z^*_j(s) = \bigcup \{Z_j(s, \beta): \beta \in k^i \}$,
functions $\varphi^i_{n, j}: \omega^i_n \times k^i \rightarrow
\omega^i$ and $\psi^i_{n, j}: \omega^i_n \times k^i \rightarrow
k^i$, and homeomorphisms $f_n: B_1 \rightarrow B_2$ such that

\begin{enumerate}[\upshape (d1)]
    \item $Z_j(s, \beta)$ is a clopen subset of
       $X_j(\varphi^i_{n, j}(s, \beta), \psi^i_{n, j}(s, \beta))$
       and $\overline{Z_j(s, \beta)}$ is a clopen subset of
       $\overline{X_j(\varphi^i_{n, j}(s, \beta), \psi^i_{n, j}
       (s, \beta))}$  for each $(s, \beta) \in \omega^i_n
       \times k^i$;
    \item $\overline{Z_j(s, \beta)} \bigcap \overline{Z_j(r,
    \alpha)} = \emptyset$ whenever $(s, \beta) \neq (r,
    \alpha)$ with $\mathrm{lh}(s) = \mathrm{lh}(r)$;
    \item $\overline{Z^*_j(s)} = \bigcup \{\overline{Z_j(s, \beta)}:
       \beta \in k^i \}$ and  $\overline{Z^*_j(s)} \bigcap
       \overline{Z^*_j(r)}  = \emptyset$ if
       $\mathrm{lh}(s) = \mathrm{lh}(r)$ and $s \neq r$;
    \item if $(s, \beta) = (r\hat{\,}l, \alpha\hat{\,} \gamma)  \in
      \omega^i_n \times k^i$, then $Z_j(s, \beta)$ is a \ndc  subset
      of $Z_j(r, \alpha)$ and $Z^*_j(s)$ is a  \ndc subset of $Z^*_j(r)$;
    \item $\nu(\varphi^i_{n, j}(s, \beta)) \geq n $ for
       each $s \in \omega^i_n$;
    \item $X^*_j(r) \subseteq \bigcup \{Z^*_j(s): s \in \omega^i_n,
       n \leq \nu(r)\}$ for each $r \in \omega^i$;
    \item $\overline{X^*_j(r)} \bigcap \overline{Z^*_j(s)}$
        is clopen in  $\overline{X^*_j(r)}$ if $\mathrm{lh}(r) =
        \mathrm{lh}(s)$;
    \item $\overline{Z_2(s, \beta)} = f_n(\overline{Z_1(s, \beta)})$
       and $\overline{Z^*_2(s)}= f_n(\overline{Z^*_1(s)})$  for
       each $(s, \beta) \in \omega^i_n \times k^i$;
    \item $ f_n(\overline{Z_1(s, \beta)}) = f_{n-1}(\overline{Z_1(s,
      \beta)})$ and $f_n(\overline{Z^*_1(s)})=f_{n-1}(\overline{Z^*_1(s)})$
       for each $(s, \beta) \in \omega^i_m \times k^i$ with $m <n$;
    \item $\varrho_2(f_n, f_{n-1})+\varrho_1(f^{-1}_n, f^{-1}_{n-1})
       < n^{-1}$ for each $n \geq 1$.
\end{enumerate}

Let $f_0: B_1 \rightarrow B_2$ be the identity mapping. For $j \in
\{1, 2 \}$ put $Z_j(\Lambda, \Lambda) = X_j(\Lambda, \Lambda) =
X_j$, $Z^*_j(\Lambda) = X^*_j(\Lambda) = X_j$, $\varphi^0_{0,
j}(\Lambda, \Lambda) = \Lambda$, and $\psi^0_{0, j}(\Lambda,
\Lambda) = \Lambda$.

Since the family $\{\overline{X_j(0, \alpha)}: \alpha \in k) \}$
is discrete in $B_j$, we can find a discrete clopen cover $\tau_j$
of $B_j$ such that  mesh$( \tau_j) < 1/2$ and every $U \in \tau_j$
intersects at most one set of the family
 $\{\overline{X_j(0, \alpha)}: \alpha \in k \}$, where $j \in
  \{1, 2 \}$. We may assume that $ \tau_2 = f_0( \tau_1)$.
Note that for any open subset $U$ of $B_j$ the conditions $U
\bigcap \overline{X_j(0, \alpha)} \neq \emptyset $ and $U \bigcap
X_j(0, \alpha) \neq \emptyset $ are equivalent. Let $$\tau^*_1 =
\{U \in \tau_1: U \cap X^*_1(0) \neq \emptyset \mathrm{\quad or
\quad} f_0(U) \cap X^*_2(0) \neq \emptyset\}.$$  The family
$\tau^*_1$ can be indexed as $\{U_1(\beta): \beta \in k \}$. Then
$$\tau^*_2 = f_0(\tau^*_1) = \{U_2( \beta) = f_0(U_1(\beta)): \beta
\in k \}.$$

For $j \in \{1, 2 \}$ and $\beta \in k$  choose the smallest
number $l_{\beta, j} \in \omega$ such that the intersection
$U_j(\beta) \bigcap X_j(l_{\beta, j}, \alpha_{\beta, j})$ is
nonempty for some $\alpha_{\beta, j} \in k$. Denote this
intersection by $Z_j(0, \beta)$. Then the number $l_{\beta, j} =
\varphi^1_{1, j}(0, \beta)$ and the ordinal $\alpha_{\beta, j} =
\psi^1_{1, j}(0, \beta)$ are assigned to $(0, \beta)$. Note that
in the case $U_j(\beta) \bigcap X^*_j(0) \neq \emptyset$ we have
$l_{\beta, j} = 0$, else $l_{\beta, j} \geq 1$. By construction,
each $Z_j(0, \beta)$ is a \ndc subset of $Z_j(\Lambda, \Lambda) =
X_j$, $Z_j(0, \beta)$ is clopen in $Z^*_j(0) = \bigcup \{Z_j(0,
\beta) $: $\beta \in k \}$, and $X^*_j(0) \subseteq Z^*_j(0)$.

The set $\cup \tau^*_j$ is clopen in $B_j$. Hence, for each $i \in
\omega$ the sets $\overline{X^*_j(i)} \setminus \bigcup \tau^*_j$
and $\overline{X^*_j(i)} \cap \bigcup \tau^*_j =
\overline{X^*_j(i)} \bigcap \overline{Z^*_j(0)}$  are clopen in
$\overline{X^*_j(i)}$.

By the Stone theorem, $\overline{Z_1(0, \beta)} \approx B(k)
\approx \overline{Z_2(0, \beta)}$ for every $\beta \in k$.
Lemma~\ref{eh} implies that there exists a homeomorphism
$g_{\beta}: U_1( \beta) \rightarrow U_2(\beta)$ such that
$g_{\beta} \left(\overline{Z_1(0, \beta)} \right) =
\overline{Z_2(0, \beta)}$. Since $\mathrm{diam}(U_1( \beta)) <
1/2$ and diam$(U_2( \beta)) < 1/2$, we have
$$\varrho_1(g_{\beta}^{-1}, f_0 ^{-1}|U_2( \beta)) +
\varrho_2(g_{\beta}, f_0|U_1( \beta)) < 1/2+1/2=1.$$

Consider a point $x \in B_1$. If $x \in U_1( \beta) \in \tau^*_1$,
then define $f_1(x) = g_{\beta}(x)$. For $x \in B_1 \setminus
\bigcup\tau^*_1$ put $f_1(x) = f_0(x)$. Clearly, $f_1: B_1
\rightarrow B_2$ is a homeomorphism, $\varrho_1(f_1^{-1},
f_0^{-1}) + \varrho_2(f_1, f_0) < 1$, and
$f_1(\overline{Z^*_1(0)}) = \overline{Z^*_2(0)}$.

So, the conditions (d1)--(d10) are satisfying for $n = 1 $.

Suppose that $Z_j(s, \beta)$, $f_n$, $\varphi^i_{n, j}$, and
$\psi^i_{n, j}$ have been obtained for $s$ with $\nu(s) \leq n$,
$\beta \in k^{\textrm{lh}(s)}$, $i \leq n$, and $j \in \{1, 2\}$.
Let us construct a homeomorphism $f_{n+1}: B_1 \rightarrow B_2$.
To do this we shall apply induction on $q$ for $s \in
\omega^q_{n+1}$, starting with $q = n+1$ and finishing with $q =
1$.

The first step $q = n+1$. The set $\omega^{n+1}_{n+1}$ contains a
unique tuple $s=(0, \ldots, 0)$. Let $\tilde{s} = s
{\upharpoonright}n$. Fix $\alpha \in k^n$. According to (d1),
$Z_j(\tilde{s}, \alpha)$ is a clopen subset of $X_j(\varphi^n_{n,
j}(\tilde{s}, \alpha), \psi^n_{n, j}(\tilde{s}, \alpha))$ for $j
\in \{1, 2\}$. Choose the smallest number $m_{\tilde{s}, \alpha
,j}$ such that the set  $Z_j(\tilde{s}, \alpha) \bigcap
X^*_j(\varphi^n_{n, j}(\tilde{s}, \alpha)\hat{\,}m_{\tilde{s},
\alpha, j})$ is nonempty. Take a discrete clopen (in
$\overline{Z_j(\tilde{s}, \alpha)}$) cover $\tau_j$ of
$\overline{Z_j(\tilde{s}, \alpha)}$ such that mesh$( \tau_j) <
(2n+2)^{-1}$ and every $U \in \tau_j$ intersects at most one set
of the family $$\{X_j(\varphi^n_{n, j}(\tilde{s},
\alpha)\hat{\,}m_{\tilde{s}, \alpha ,j}, \psi^n_{n, j}(\tilde{s},
\alpha)\hat{\,}\alpha_{n+1}): \alpha_{n+1} \in k \}.$$ By (d8), we
have $f_n(\overline{Z_1(\tilde{s}, \alpha)}) =
\overline{Z_2(\tilde{s}, \alpha)}.$ Then we may assume that $
\tau_2 = f_n( \tau_1)$. Denote by $\tau^*_1$ the family
$$\{U \in \tau_1: U \cap X^*_1(\varphi^n_{n, 1}(\tilde{s},
\alpha)\hat{\,}m_{\tilde{s}, \alpha ,1}) \neq \emptyset \mathrm{
\, or \,}  f_n(U) \cap X^*_2(\varphi^n_{n, 2}(\tilde{s},
\alpha)\hat{\,}m_{\tilde{s}, \alpha ,2}) \neq \emptyset\}.$$ Since
$X^*_1(\varphi^n_{n, 1}(\tilde{s}, \alpha)\hat{\,}m_{\tilde{s},
\alpha ,1})$ is a weight-homogeneous space of weight $k$, the
family $\tau^*_1$ can be indexed as $\{U_1(\gamma): \gamma \in k
\}$. Put $$\tau^*_2 = f_n(\tau^*_1) = \{U_2(\gamma) =
f_n(U_1(\gamma)): \gamma \in k \}.$$

For $j \in \{1, 2 \}$ and $\gamma \in k$  choose the smallest
number $l_{\gamma, j} \in \omega$ such that the set
$$Z_j(s, \alpha \hat{\,} \gamma) = U_j(\gamma) \bigcap
X_j(\varphi^n_{n, j}(\tilde{s}, \alpha )\hat{\,}l_{\gamma, j},
\psi^n_{n, j}(\tilde{s}, \alpha )\hat{\,} \alpha_{\gamma, j})$$ is
nonempty for some $\alpha_{\gamma, j} \in k$. Then the tuples
$$\varphi^n_{n, j}(\tilde{s}, \alpha )\hat{\,}l_{\gamma, j} =
\varphi^{n+1}_{n+1, j}(s, \alpha \hat{\,} \gamma) \in
\omega^{n+1}$$ and $$\psi^n_{n, j}(\tilde{s}, \alpha )\hat{\,}
\alpha_{\gamma, j} = \psi^{n+1}_{n+1, j}(s, \alpha \hat{\,}
\gamma) \in k^{n+1}$$ are assigned to $(s, \alpha \hat{\,}
\gamma)$.

Clearly, $l_{\gamma, j} \geq m_{\tilde{s}, \alpha ,j}$. Then
$$\nu(\varphi^{n+1}_{n+1, j}(s, \alpha \hat{\,} \gamma)) \geq
\nu(\varphi^n_{n, j}(\tilde{s}, \alpha)) + 1 + l_{\gamma, j} \geq
n+1.$$ By construction, each $Z_j(s, \alpha \hat{\,} \gamma)$ is a
\ndc subset of $Z_j(\tilde{s}, \alpha)$ and each $Z_j(s, \alpha
\hat{\,} \gamma)$ is clopen in $\cup \{Z_j(s, \alpha \hat{\,}
\gamma) : \gamma \in k \}$.

By the Stone theorem, $\overline{Z_1(s, \alpha \hat{\,} \gamma)}
\approx B(k) \approx \overline{Z_2(s, \alpha \hat{\,} \gamma)}$
for every $\gamma \in k$. Lemma~\ref{eh} implies that there exists
a homeomorphism $g_{\gamma}: U_1(\gamma) \rightarrow U_2(\gamma)$
such that $g_{\gamma} \left(\overline{Z_1(s, \alpha \hat{\,}
\gamma)} \right) = \overline{Z_2(s, \alpha \hat{\,} \gamma)}$.
Since diam$(U_1(\gamma)) < (2n+2)^{-1}$ and diam$(U_2(\gamma)) <
(2n+2)^{-1}$, we have
$$\varrho_1(g_{\gamma}^{-1}, f_n ^{-1}|U_2(\gamma)) +
\varrho_2(g_{\gamma}, f_n|U_1(\gamma)) < (2n+2)^{-1}+(2n+2)^{-1} =
(n+1)^{-1}.$$

Since $\tau_j = \{U_j(\gamma): \gamma \in k \}$ forms a discrete
clopen cover of $\overline{Z_j(\tilde{s}, \alpha)}$  for $j \in
\{1,2 \}$, we can obtain a homeomorphism $f^s_{n+1,\alpha}:
\overline{Z_1(\tilde{s}, \alpha)} \rightarrow
\overline{Z_2(\tilde{s}, \alpha)}$ by the rule
\[f_{n+1, \alpha}^s(x) = \left\{
    \begin{array}{rl}
    g_{\gamma}(x), & \mbox{if } x \in U_1(\gamma) \mbox{ for some }
     U_1(\gamma)  \in \tau^*_1 \\
    f_n(x), & \mbox{if } x \in \overline{Z_1(\tilde{s},
               \alpha)} \setminus \bigcup \tau^*_1
    \end{array} \right. \]

For $j \in \{1,2 \}$ put $Z^*_j(\tilde{s}) = \bigcup
\{Z_j(\tilde{s}, \alpha): \alpha \in k^n \}$ and $Z^*_j(s) =
\bigcup \{Z_j(s, \alpha \hat{\,} \gamma): \alpha \in k^{n+1} \}$.
Using (d2), we see that the combination mapping
$$f^s_{n+1} = \nabla \{f_{n+1, \alpha }^s: \alpha \in k^n
 \} : \overline{Z^*_1(\tilde{s})} \rightarrow \overline{Z^*_2(\tilde{s}
 )}$$ is a homeomorphism. By the inductive assumption, we have
$f_n(\overline{Z_1(\tilde{s}, \alpha)}) = \overline{Z_1(\tilde{s},
\alpha)}$ for each $(\tilde{s}, \alpha)$. Then for the tuple $s
=(0, \ldots, 0) \in \omega^{n+1}_{n+1}$ we obtain the following
inequality
$$\varrho_1 \left((f^s_{n+1})^{-1},
f_n ^{-1} \big|\overline{Z^*_2(\tilde{s})}  \right) + \varrho_2
\left(f^s_{n+1}, f_n \big|\overline{Z^*_1(\tilde{s} )} \right) <
(n+1)^{-1}.$$ The step with $ q = n+1$ is finished.

Suppose that functions $\varphi^p_{n+1, j}$ and $\psi^p_{n+1, j}$,
a \ndc subset $Z^*_j(s)$ of $Z^*_j(\tilde{s} )$, and a
homeomorphism $f^s_{n+1}: \overline{Z^*_1(\tilde{s} )} \rightarrow
\overline{Z^*_2(\tilde{s} )}$ have been obtained for each $s \in
\omega^p_{n+1}$ with $p > q$ and $j \in \{1,2 \}$, where
$\tilde{s} = s{\upharpoonright}( \textrm{lh}(s)-1)$.

Fix $s \in \omega^q_{n+1}$. We distinguish two cases.

\textit{Case} 1. Let $s = \tilde{s} \, \hat{\,} 0$. We can
construct the required sets and functions as above for the step $q
=n+1$.

\textit{Case} 2. Let $s = \tilde{s} \, \hat{\,} t$ and $t \geq 1$.
For each $i < t$ we have $\nu(\tilde{s} \, \hat{\,} i) \leq n$;
hence, there exists a unique number $\iota\in \omega$ such that
the tuple $\tilde{s} \, \hat{\,} i  \, \hat{\,} \iota \in
\omega^{q+1}_{n+1}$. By the inductive assumption, the sets
$Z^*_j(\tilde{s}\, \hat{\,} i)$  and the homeomorphism
$f^{\tilde{s}\, \hat{\,} i \, \hat{\,} \iota }_{n+1}:
\overline{Z^*_1(\tilde{s}\, \hat{\,} i)} \rightarrow
\overline{Z^*_2(\tilde{s}\, \hat{\,} i)}$ are already defined for
any $j \in \{1,2 \}$ and $i < t$ such that
$$\varrho_1 \left((f^{\tilde{s}\, \hat{\,} i \, \hat{\,}
\iota }_{n+1})^{-1}, f_n ^{-1} \big|\overline{Z^*_2(\tilde{s}\,
\hat{\,} i)} \right) + \varrho_2 \left(f^{\tilde{s}\, \hat{\,} i
\, \hat{\,} \iota }_{n+1}, f_n \big|\overline{Z^*_1(\tilde{s}\,
\hat{\,} i)} \right) < (n+1)^{-1}.$$

Using (d3), we observe that the sets $\overline{Z^*_j(\tilde{s}\,
\hat{\,} i)}$, where $0 \leq i < t$, are pairwise disjoint. From
(d4) it follows that $Y_j(t) = \bigcup \{
\overline{Z^*_j(\tilde{s}\, \hat{\,} i)}: i < t \}$ is a \ndc
subset of $\overline{Z^*_j(\tilde{s})}$. For a point $x \in
\overline{Z^*_1(\tilde{s}\, \hat{\,} i)}$ put $g^t_{n+1}(x)=
f^{\tilde{s}\, \hat{\,} i \, \hat{\,} \iota }_{n+1}(x)$. Then the
mapping $g^t_{n+1}: Y_1(t) \rightarrow Y_2(t)$ is a homeomorphism.
From (d8) and (d9) it follows that $f_n(Y_1(t)) = Y_2(t)$. By
construction,
$$\varrho_1 \left((g^t_{n+1})^{-1}, f_n ^{-1} \big| Y_2(t)
\right) + \varrho_2 \left(g^t_{n+1}, f_n \big|Y_1(t) \right) <
(n+1)^{-1}.$$

Fix $\alpha \in k^{q-1}$. By (d1), $\overline{Z_j(\tilde{s},
\alpha)}$ is a clopen subset of
$$\overline{X_j \left(\varphi^{q-1}_{n-t-1, j}(\tilde{s}, \alpha),
\psi^{q-1}_{n-t-1, j}(\tilde{s}, \alpha) \right)}$$ for $j \in
\{1, 2\}$. Choose the smallest number $m_{\tilde{s}, \alpha ,j}$
such that the set
$$\overline{Z_j(\tilde{s}, \alpha)} \bigcap \left(
\overline{X^*_j(\varphi^q_{n-t, j}(\tilde{s},
\alpha)\hat{\,}m_{\tilde{s}, \alpha, j})} \setminus Y_j(t)
\right)$$ is nonempty. Using (d7) and (d3), we see that the last
set is clopen in $\overline{X^*_j(\varphi^q_{n-t, j}(\tilde{s},
\alpha)\hat{\,}m_{\tilde{s}, \alpha, j})}$. Take a discrete clopen
(in $\overline{Z_j(\tilde{s}, \alpha)}$) cover $\tau_{\alpha, j}$
of $\overline{Z_j(\tilde{s}, \alpha)}$ such that

\begin{enumerate}[\upshape (e1)]
    \item $\mathrm{mesh}( \tau_{\alpha, j}) < (2n+2)^{-1}$;
    \item every $U \in \tau_{\alpha, j}$ intersects at most one set
    of the family $$\left\{X_j \left(\varphi^{q-1}_{n-t-1, j}(\tilde{s},
        \alpha)\hat{\,}m_{\tilde{s}, \alpha ,j}, \psi^{q-1}_{n-t-1,
        j} (\tilde{s}, \alpha)\hat{\,}\alpha_{q} \right):
        \alpha_{q} \in k  \right\};$$
    \item for every $U \in \tau_{\alpha, j}$ if $U \cap Y_j(t) \neq
    \emptyset$, then $$U \cap
        \left( \overline{X^*_j \left(\varphi^{q-1}_{n-t-1, j}
        (\tilde{s},\alpha)\hat{\,}m_{\tilde{s}, \alpha, j} \right)}
         \setminus Y_j(t) \right)  = \emptyset.$$
\end{enumerate}

According to (d8), we may assume that $ \tau_{\alpha, 2} = f_n(
\tau_{\alpha, 1})$. Let
\begin{multline*} \tau^*_{\alpha, 1} = \{U \in
\tau_{\alpha, 1}: U \cap \left( X^*_1(\varphi^{q-1}_{n-t-1,
1}(\tilde{s}, \alpha)\hat{\,}m_{\tilde{s}, \alpha ,1}) \setminus
Y_1(t) \right)
\neq \emptyset   \\
\quad \text{or }   \quad\  f_n(U) \bigcap \left( X^*_2
\left(\varphi^{q-1}_{n-t-1, 2}(\tilde{s},
\alpha)\hat{\,}m_{\tilde{s}, \alpha ,2} \right)  \setminus Y_2(t)
\right) \neq \emptyset \}.
\end{multline*}

Using weight-homogeneity of $X^*_1 \left(\varphi^{q-1}_{n-t-1,
1}(\tilde{s}, \alpha)\hat{\,}m_{\tilde{s}, \alpha ,1} \right)$, we
can index the family $\tau^*_{\alpha, 1}$ as $\{U_1(\gamma):
\gamma \in k \}$. Put $$\tau^*_{\alpha, 2} = f_n(\tau^*_{\alpha,
1}) = \{U_2(\gamma) = f_n(U_1(\gamma)): \gamma \in k \}.$$ Since
$f_n(Y_1(t)) = Y_2(t)$, for $U \in \tau_{\alpha, 1}$ we have $U
\cap Y_1(t) \neq \emptyset \iff f_n(U)\cap Y_2(t) \neq \emptyset$.
Together with (e3), this implies that $f_n(U) \cap Y_2(t) =
\emptyset$ if $U \in \tau^*_{\alpha, 1}$. Similarly, $ f^{-1}_n(U)
\cap Y_1(t) = \emptyset$ if $U \in \tau^*_{\alpha, 2}$.

Fix a $\gamma \in k$. For $j \in \{1, 2 \}$ choose the smallest
number $l_{\gamma, j} \in \omega$ such that the set
$$Z_j(s, \alpha \hat{\,} \gamma) = U_j(\gamma) \bigcap
\left( X_j(\varphi^{q-1}_{n-t-1, j}(\tilde{s}, \alpha
)\hat{\,}l_{\gamma, j}, \psi^{q-1}_{n-t-1, j}(\tilde{s}, \alpha
)\hat{\,} \alpha_{\gamma, j})  \setminus Y_j(t) \right)$$ is
nonempty for some $\alpha_{\gamma, j} \in k$. Then the tuples
$$\varphi^{q-1}_{n-t-1, j}(\tilde{s}, \alpha )\hat{\,}l_{\gamma, j}
= \varphi^{q}_{n+1, j}(s, \alpha \hat{\,} \gamma) \in \omega^{q}$$
and $$\psi^{q-1}_{n-t-1, j}(\tilde{s}, \alpha )\hat{\,}
\alpha_{\gamma, j} = \psi^{q}_{n+1, j}(s, \alpha \hat{\,} \gamma)
\in k^{q}$$ are assigned to $(s, \alpha \hat{\,} \gamma)$. As
above, we have $\nu(\varphi^{q}_{n+1, j}(s, \alpha \hat{\,}
\gamma)) \geq n+1.$

By construction, $Z_j(s, \alpha \hat{\,} \gamma)$ is a \ndc subset
of $Z_j(\tilde{s}, \alpha)$ and $\overline{Z_j(s, \alpha \hat{\,}
\gamma)}$ is a \ndc subset of $\overline{Z_j(\tilde{s}, \alpha)}$.
Since $\overline{Z_1(s, \alpha \hat{\,} \gamma)} \approx B(k)
\approx \overline{Z_2(s, \alpha \hat{\,} \gamma)}$, by
Lemma~\ref{eh}, there exists a homeomorphism $h_{n+1, \alpha
\hat{\,} \gamma}^s: U_1(\gamma) \rightarrow U_2(\gamma)$ such that
$$h_{n+1, \alpha \hat{\,} \gamma}^s \left(\overline{Z_1(s, \alpha
\hat{\,} \gamma)} \right) = \overline{Z_2(s, \alpha \hat{\,}
\gamma)}.$$

Clearly, the combination mapping
$$h^s_{n+1,\alpha} = \nabla \{h_{n+1, \alpha \hat{\,}
\gamma}^s: \gamma \in k \} : \cup \tau^*_{\alpha, 1} \rightarrow
\cup \tau^*_{\alpha, 2}$$ is a homeomorphism such that
$$h^s_{n+1,\alpha} \left(\cup \{\overline{Z_1(s, \alpha \hat{\,}
\gamma)}:  \gamma \in k \} \right) = \cup \{\overline{Z_2(s,
\alpha \hat{\,} \gamma)}: \gamma \in k \}.$$

By construction, the set $\cup \tau^*_{\alpha, j}$ is clopen in
$\overline{Z_j(\tilde{s}, \alpha)}$, $Y_j(t) \cap \bigcup
\tau^*_{\alpha, j} = \emptyset$, and $Y_j(t) \cap
\overline{Z_1(\tilde{s}, \alpha)}$ is a \ndc subset of
$\overline{Z_j(\tilde{s}, \alpha)}$ for each $j \in \{1, 2 \}$ and
$\alpha \in k^{q-1}$.

For $j \in \{1, 2 \}$ put $Z^*_j(s) = \cup \{Z_j(s, \alpha
\hat{\,} \gamma): \alpha \in k^{q-1}, \gamma \in k \}$. Then
$\overline{Z^*_j(s)} = \cup \{\overline{Z_j(s, \alpha \hat{\,}
\gamma)}: \alpha \in k^{q-1}, \gamma \in k \}$. Let $\tau^*_j =
\cup \{\bigcup \tau^*_{\alpha, j}: \alpha \in k^{q-1} \}$. Then
the set $\cup \tau^*_j$ is clopen in
$\overline{Z^*_j(\tilde{s})}$, $Y_j(t) \cap \bigcup \tau^*_j =
\emptyset$, and $Y_j(t) \cap \overline{Z^*_j(\tilde{s})}$ is a
\ndc subset of $\overline{Z^*_j(\tilde{s})}$.

We define the homeomorphism $h^s_{n+1}: \cup \tau^*_1 \rightarrow
\cup \tau^*_2$ by the rule $h^s_{n+1}(x) = h^s_{n+1,\alpha}(x)$ if
$x \in \cup \tau^*_{\alpha, 1}$ for some $\alpha \in k^{q-1}$. By
virtue of Lemma~\ref{q3} the homeomorphism $g^t_{n+1}: Y_1(t)
\rightarrow Y_2(t)$ can be extended to a homeomorphism
$$\tilde{g}\, ^t_{n+1}: \overline{Z^*_1(\tilde{s})} \setminus
\bigcup \tau^*_1 \rightarrow \overline{Z^*_2(\tilde{s})} \setminus
\bigcup \tau^*_2.$$ Then the combination mapping
$$f^s_{n+1} = h_{n+1}^s \nabla \tilde{g}\, ^t_{n+1}:
\overline{Z^*_1(\tilde{s})} \rightarrow \overline{Z^*_2(\tilde{s}
 )}$$ is a homeomorphism.
One can check that the following inequality
$$\varrho_1 \left((f^s_{n+1})^{-1},
f_n ^{-1} \big|\overline{Z^*_2(\tilde{s})}  \right) + \varrho_2
\left(f^s_{n+1}, f_n \big|\overline{Z^*_1(\tilde{s} )} \right) <
(n+1)^{-1}$$ holds.

Note that the sets $\overline{Z^*_j(\tilde{s})}$ and
$\overline{Z^*_j(\tilde{r})}$ are disjoint for different tuples
$s$ and $r$ from $\omega^{q}_{n+1}$.

This completes the induction on $q$ for $s \in \omega^{q}_{n+1}$.

The set $\omega^1_{n+1}$ contains a unique tuple $s=(n)$.  Since
$\tilde{s} = \Lambda$, we have $Z^*_j(\Lambda) = X_j$ and
$\overline{Z^*_j(\Lambda)} = B_j$ for $j \in \{1, 2 \}$. Then the
mapping $f^{(n)}_{n+1}: \overline{Z^*_1(\Lambda)} \rightarrow
\overline{Z^*_2(\Lambda)}$ is the required homeomorphism $f_{n+1}:
B_1 \rightarrow B_2$.

The induction on $n$ is finished.

Let us construct a homeomorphism $f: B_1 \rightarrow B_2$ with $f(
X_1) = X_2$.

Consider a point $x \in B_1$. The sequence $\{f_n(x): n \in \omega
\}$ is a Cauchy sequence in a complete metric space $B_2$. Hence,
it converges to a point $y \in B_2$; put $f(x) = y$. From (d10) it
follows that the sequence $\{f_n\}$ is uniformly convergent to the
mapping $f$. According to \cite[Theorem 4.2.19]{Eng}, $f$ is
continuous. Similarly, the mapping $g = \lim_{n \to \infty}
{f^{-1}_n}: B_2 \rightarrow B_1$ is continuous.

For a point $y \in B_2$ and $n \in \omega$ let $z = f^{-1}_n(y)$;
then
$$\begin{array}{l} \varrho_2(f \circ f^{-1}_n(y), y) =
  \lim_{i \to \infty} {\varrho_2(f_i \circ f^{-1}_n(y),
  f_n \circ f^{-1}_n(y)) } \\   \qquad{} = \lim_{i \to \infty}
   {\varrho_2(f_i(z), f_n(z)) }  \leq  (n+1)^{-1}.
\end{array}$$

Therefore, $\lim_{n \to \infty} {\varrho_2(f \circ f^{-1}_n(y), y)
= 0}$. But $\lim_{n \to \infty} {(f \circ f^{-1}_n)} = f \circ g$,
so, $f \circ g : B_2 \rightarrow B_2$ is the identity mapping. In
the same way, $g \circ f : B_1 \rightarrow B_1$ is the identity
mapping. Hence, $f^{-1} = g$ and $f: B_1 \rightarrow B_2$ is a
homeomorphism.

Let us show that $f(X_1) \subseteq X_2$. Take a point $x \in X_1$,
then $\{x\} = \cap \{X_1(\chi{ \upharpoonright}n,
\xi{\upharpoonright}n): n \in \omega \}$ for some $\chi \in
\omega^\omega$ and $\xi \in k^\omega $. From (d2), (c1), and (c4)
it follows that there exist $\tau \in \omega^\omega$ and
$\vartheta \in k^\omega $ such that $f_n(x) \in
\overline{X_2(\tau{ \upharpoonright}n,
\vartheta{\upharpoonright}n)}$ for each $n \in \omega$. Using
(c7), we get $\cap \{X_2(\tau{ \upharpoonright}n,
\vartheta{\upharpoonright}n) : n \in \omega \} = \{y \}$ for some
$y \in X_2$. The condition (c6) of Lemma~\ref{ad} implies that
$$f(x) = \lim_{n \to \infty} {f_n(x)} = y \in X_2.$$

Similarly, $f^{-1}(X_2) \subseteq X_1$. Thus, $f(X_1) = X_2$.
\end{proof}

\begin{thm}\label{t-qR}
Let $X$ be an absolute $F_{\sigma \delta}$-set of first category
such that $\mathrm{Ind}X= 0$, $X$ is nowhere $\sigma LW({<}k) +
\mathscr{G}_{\delta \sigma}$,  and $w(X)=k$. Then $X \approx
Q(k)^\omega$.
\end{thm}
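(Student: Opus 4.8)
The plan is to reduce the whole statement to Theorem~\ref{tqc} by realizing both the abstract space $X$ and the concrete space $Q(k)^\omega$ as dense $F_{\sigma\delta}$-subsets of the Baire space $B(k)$ of exactly the type handled there. First I would record that the hypotheses force $X$ to be weight-homogeneous: if some nonempty open $U\subseteq X$ had weight ${<}k$, then $U$ would be locally of weight ${<}k$, so $U\in\sigma LW({<}k)\subseteq\sigma LW({<}k)+\mathscr{G}_{\delta\sigma}$, contradicting that $X$ is nowhere $\sigma LW({<}k)+\mathscr{G}_{\delta\sigma}$. Hence $X\in\mathscr{E}_k$. Being strongly zero-dimensional, metrizable and of weight $k$, $X$ embeds in $B(k)$, and as an absolute $F_{\sigma\delta}$-set its image lies in $\mathscr{F}_{\sigma\delta}(B(k))$. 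The closure $\overline{X}$ in $B(k)$ is complete, strongly zero-dimensional, weight-homogeneous of weight $k$ (weight is inherited from the dense subset $X$), and nowhere locally compact, since a weight-$k$ metric space with $k>\omega$ cannot be locally compact. By the Stone theorem (used already in the proof of Lemma~\ref{ad}) we get $\overline{X}\approx B(k)$, so after this identification $X$ is a dense $F_{\sigma\delta}$-subset of $B(k)$ of first category and nowhere $\sigma LW({<}k)+\mathscr{G}_{\delta\sigma}$.

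Next I would check that $Q(k)^\omega$ likewise belongs to the class and embeds densely in $B(k)$. The key observation is that $Q(k)$ is $\sigma$-discrete: $Q(k)=\bigcup_n D_n$, where $D_n=\{x:x_m=0\text{ for }m\ge n\}$ is closed, discrete and \ndc in $B(k)=k^\omega$. Thus $Q(k)\in\mathscr{F}_\sigma(B(k))$ and is of first category, and since $Q(k)$ is dense in $B(k)$ (a $\sigma$-product is dense in the product) and $B(k)^\omega\approx B(k)$, the set $Q(k)^\omega=\bigcap_n\pi_n^{-1}(Q(k))$ (with $\pi_n$ the $n$th coordinate projection) is a dense $F_{\sigma\delta}$-subset of $B(k)$. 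Strong zero-dimensionality, first category, and weight-homogeneity of weight $k$ of the product are routine, and once more the Stone theorem gives $\overline{Q(k)^\omega}=B(k)$.

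Granting that $Q(k)^\omega$ is also nowhere $\sigma LW({<}k)+\mathscr{G}_{\delta\sigma}$, both $X$ and $Q(k)^\omega$ are dense $F_{\sigma\delta}$-subsets of $B(k)$ that are of first category and nowhere $\sigma LW({<}k)+\mathscr{G}_{\delta\sigma}$. Theorem~\ref{tqc} then yields a homeomorphism $f:B(k)\to B(k)$ with $f(Q(k)^\omega)=X$, and therefore $X\approx Q(k)^\omega$, as required.

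The main obstacle is precisely the granted claim, that $Q(k)^\omega$ is nowhere $\sigma LW({<}k)+\mathscr{G}_{\delta\sigma}$. Since $Q(k)$ is $h$-homogeneous (Corollary~\ref{c-hQk}), the power $Q(k)^\omega$ is $h$-homogeneous, and every nonempty open set contains a clopen box $\prod_i W_i$ with each $W_i\approx Q(k)$, hence a clopen copy of $Q(k)^\omega$; as the property $\sigma LW({<}k)+\mathscr{G}_{\delta\sigma}$ is inherited by closed subsets and splits over the union, it suffices to prove the single statement $Q(k)^\omega\notin\sigma LW({<}k)+\mathscr{G}_{\delta\sigma}$. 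This is the non-separable analogue of van Engelen's theorem~\cite{EngQ} that $Q^\omega$ is not absolute $G_{\delta\sigma}$, and I expect it to be the genuine hard core. Note that $Q(k)\in\sigma LW({<}k)$ already (each $D_n$ is locally of weight $1$), so the statement really asserts the non-productivity of $\sigma LW({<}k)+\mathscr{G}_{\delta\sigma}$ under countable powers. I would attack it by the Baire--Lusin mechanism: assuming $Q(k)^\omega=L\cup G$ with $L=\bigcup_i L_i\in\sigma LW({<}k)$ and $G=\bigcup_j G_j\in\mathscr{G}_{\delta\sigma}$, use the canonical class-$3$ tree furnished by Lemma~\ref{ad}, together with the width-$k$ branching available along infinitely many coordinates, to run a fusion producing a point $p\in Q(k)^\omega$ that lies outside every completely metrizable piece $G_j$ (a Baire-class strictness argument, as in~\cite{EngQ}) and simultaneously outside every locally-weight-${<}k$ piece $L_i$ (a weight diagonalization). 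The delicate point, and where the argument departs from the separable case, is carrying out both diagonalizations within one construction so that the $\mathscr{G}_{\delta\sigma}$-summand and the $\sigma LW({<}k)$-summand are defeated at once.
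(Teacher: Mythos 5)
Your overall architecture is the same as the paper's: show $X$ and $Q(k)^\omega$ both sit as dense first-category $F_{\sigma\delta}$-subsets of $B(k)$ that are nowhere $\sigma LW({<}k)+\mathscr{G}_{\delta\sigma}$, and then invoke Theorem~\ref{tqc}. But the step you explicitly leave as ``granted'' --- that $Q(k)^\omega$ is nowhere $\sigma LW({<}k)+\mathscr{G}_{\delta\sigma}$ --- is precisely the substantive content of the paper's proof, and your sketch of how you would attack it (a fusion along the class-$3$ tree of Lemma~\ref{ad}, with two diagonalizations to be interleaved ``within one construction'') is both unexecuted and considerably harder than what is actually needed. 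As written, the proposal has a genuine gap at its center.

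The paper closes that gap with a direct coordinatewise diagonalization that exploits the product structure of $Q(k)^\omega$, and it is worth seeing why no fusion is required. Suppose a nonempty clopen $V\subseteq Q(k)^\omega$ equals $\bigcup_i(F_i\cup G_i)$ with each $F_i$ locally of weight ${<}k$ and each $G_i\in\mathscr{G}_\delta$. Inside $V$ take a canonical box $U=\prod_i Q_i$ with $Q_i\approx Q(k)$. The key observation is that every ``tail product'' $Z=\{q_0\}\times\cdots\times\{q_n\}\times\prod_{i>n}Q_i$ is a \emph{closed} copy of $Q(k)^\omega$, and each summand meets $Z$ in a nowhere dense set: $Z\cap F_i$ because it is locally of weight ${<}k$ while $Z$ is weight-homogeneous of weight $k$, and $Z\cap G_i$ because it is closed in a $G_\delta$-set, hence topologically complete, hence nowhere dense in the first-category space $Z$. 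So one chooses finitely many further coordinates and a smaller clopen set at each stage to avoid $F_i\cup G_i$, and the limit point $q=(q_0,q_1,\ldots)$ lies in $\prod_iQ_i\subseteq Q(k)^\omega$ \emph{automatically} --- no completeness or tree-fusion argument is needed to land the diagonal point inside the space, which is exactly the ``delicate point'' you were worried about. Your two diagonalizations are thus handled uniformly by the single statement that each $F_i\cup G_i$ is nowhere dense in each tail product. (Two smaller remarks: your assertion that a weight-$k$ metric space with $k>\omega$ cannot be locally compact needs weight-homogeneity --- a compact metrizable neighborhood is second countable, so no point of a weight-homogeneous space of uncountable weight has one --- and you silently restrict to $k>\omega$ throughout, whereas the theorem as stated also covers $k=\omega$, which the paper dispatches by citing van Engelen.)
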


\begin{proof}
In the case $k = \omega$ the theorem was proved by van
Engelen~\cite{EngQ}. Below we shall assume that $k > \omega$.

Clearly, the space $Q(k)^\omega$ is of first category,
$Q(k)^\omega \in \mathscr{F}_{\sigma \delta}$, and $Q(k)^\omega
\in \mathscr{E}_k$.

Let us verify that $Q(k)^\omega$ is nowhere $\sigma LW({<}k) +
\mathscr{G}_{\delta \sigma}$. Assume the converse. Then there
exists a nonempty clopen subset $V$ of $Q(k)^\omega$ such that $V
= \cup \{F_i \bigcup G_i : i \in \omega \}$, where all $F_i$ is
locally of weight ${<}k$ and $G_i \in \mathscr{G}_{\delta}$. Since
$V \in \mathscr{E}_k$, every $F_i$ is nowhere dense in $V$. Every
$G_i$ is nowhere dense in $V$, because $V$ is of first category.

By Corollary~\ref{c-hQk}, the space $Q(k)$ is $h$-homogeneous. By
the definition of the Tychonoff topology, $V$ contains an element
of the canonical base for the Cartesian product $U = \prod \{Q_i:
i \in \omega \}$, where each $Q_i \approx Q(k)$. Since the set
$F_0 \cup G_0$ is nowhere dense in $U$, there exists a nonempty
clopen subset $U_0 \subseteq U$ with $U_0 \cap (F_0 \cup G_0) =
\emptyset$. Choose $n_0 \in \omega$ and points $q_i \in Q_i$,
where $0 \leq i \leq n_0$, such that the closed set $$Z_0 = \{q_0
\}\times \ldots \times \{ q_{n_0} \} \times \prod \{Q_i: i
> n_0 \} \subset U_0.$$ Evidently, $Z_0 \approx Q(k)^\omega$.
Since $Z_0 \cap F_1$ is locally of weight ${<}k$, it is a nowhere
dense subset of $Z_0$. If the intersection $Z_0 \cap G_1$ is
nonempty, it is topologically complete as closed subset of $G_1$.
Then $Z_0 \cap G_1$ is nowhere dense in $Z_0$, because $Z_0$ is of
first category. So, $Z_0 \cap (F_1 \cup G_1)$ is a nowhere dense
subset of $Z_0$. Hence, there exist a number $n_1 \in \omega$,
points $q_i \in Q_i$, where $n_0 < i \leq n_1$, and a clopen set
$U_1 \subset U_0$ such that $U_1 \cap (F_1 \cup G_1) = \emptyset$
and the closed set $$Z_1 = \{q_0 \} \times \ldots \{q_{n_1}\}
\times \prod \{Q_i: i > n_1 \} \subseteq U_1 .$$ Continuing in
this way, we obtain the point $q = \{q_0, q_1, \ldots \} \in U$
which belongs to $$U \setminus \bigcup \{F_i \cup G_i : i \in
\omega \}.$$ Hence, $V \neq \cup \{F_i \bigcup G_i : i \in \omega
\}$, a contradiction. Thus, $Q(k)^\omega$ is nowhere $\sigma
LW({<}k) + \mathscr{G}_{\delta \sigma}$.

Consider the spaces $X$ and $Q(k)^\omega$ as dense subsets of
$B(k)$. By virtue of Theorem~\ref{tqc}, there exists a
homeomorphism $f: B(k) \rightarrow B(k)$ such that $f(X) =
Q(k)^\omega$.
\end{proof}

\begin{cor}\label{c-dhm}
The Baire space $B^*(k)$ is densely homogeneous with respect to
the space $Q(k)^\omega$.
\end{cor}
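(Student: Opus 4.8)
The plan is to unwind the definition of dense homogeneity and then read the statement off from the characterization of $Q(k)^\omega$ together with Theorem~\ref{tqc}. Recall that the assertion ``$B^*(k)$ is densely homogeneous with respect to $Q(k)^\omega$'' means precisely: whenever $Y_1$ and $Y_2$ are dense subsets of $B^*(k)$ with $Y_1 \approx Q(k)^\omega \approx Y_2$, there is a homeomorphism $h : B^*(k) \rightarrow B^*(k)$ with $h(Y_1) = Y_2$. So I would fix two such dense copies $Y_1, Y_2$ and produce the required $h$.

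For $k = \omega$ the space $B^*(\omega)$ is the Cantor set $\mathscr{C}$, and the assertion is exactly van Engelen's theorem that $\mathscr{C}$ is densely homogeneous with respect to $Q^\omega$~\cite{EngQ}; here I would simply cite it. Thus assume $k > \omega$, where $B^*(k) = B(k)$.

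The heart of the matter is to check that the dense copies $Y_1, Y_2$ satisfy the hypotheses of Theorem~\ref{tqc}. Since $Y_i \approx Q(k)^\omega$, the properties established for $Q(k)^\omega$ in the proof of Theorem~\ref{t-qR} transfer to each $Y_i$: being of first category and being nowhere $\sigma LW({<}k) + \mathscr{G}_{\delta \sigma}$ are topological invariants, so each $Y_i$ enjoys them. Moreover $Q(k)^\omega$ is an absolute $F_{\sigma \delta}$-set, hence each homeomorphic copy $Y_i$, viewed inside $B(k)$, is an $F_{\sigma \delta}$-subset of $B(k)$. Together with the standing assumption that each $Y_i$ is dense in $B(k)$, this places us exactly in the situation of Theorem~\ref{tqc} with $X_1 = Y_1$ and $X_2 = Y_2$.

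Applying Theorem~\ref{tqc} then yields a homeomorphism $f : B(k) \rightarrow B(k)$ with $f(Y_1) = Y_2$, which is the desired self-homeomorphism of $B^*(k)$. There is no genuine obstacle here: the entire content has been front-loaded into Theorem~\ref{tqc} and the verification (inside Theorem~\ref{t-qR}) that $Q(k)^\omega$ has the listed properties. The only point requiring a moment's care is the observation that ``$F_{\sigma \delta}$ in $B(k)$'', ``of first category'', and ``nowhere $\sigma LW({<}k) + \mathscr{G}_{\delta \sigma}$'' are either absolute or purely topological, so that they pass from $Q(k)^\omega$ to an arbitrary dense copy $Y_i$ without reference to the particular embedding.
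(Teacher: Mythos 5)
Your proposal is correct and follows essentially the same route as the paper: the $k=\omega$ case is delegated to van Engelen, and for $k>\omega$ the paper likewise combines Theorem~\ref{t-qR} (which supplies the topological properties of $Q(k)^\omega$ that transfer to any dense copy) with Theorem~\ref{tqc}. Your write-up merely makes explicit the verification that the hypotheses of Theorem~\ref{tqc} hold for arbitrary dense copies, which the paper leaves implicit.
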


\begin{proof}
Van Engelen~\cite{En-tQ} proved that the Cantor set $\mathscr{C}$
is densely homogeneous with respect to the space $Q^\omega$. In
the case $k > \omega$ the corollary follows from
Theorems~\ref{tqc} and~\ref{t-qR}.
\end{proof}

\begin{cor}\label{c-a2}
Let $X_1$ and $X_2$ be dense subsets of the Baire space $B^*(k)$
such that $B^*(k) \setminus X_1 \approx B^*(k) \setminus X_2
\approx Q(k)^\omega$. Then $X_1 \approx X_2$.
\end{cor}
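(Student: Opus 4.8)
The plan is to promote the given homeomorphism between the complements to an autohomeomorphism of the whole space and then simply restrict it. Put $C_i = B^*(k)\setminus X_i$ for $i\in\{1,2\}$, so that $C_1\approx C_2\approx Q(k)^\omega$ by hypothesis. Since each $X_i$ is dense, each $C_i$ is codense (it has empty interior); the point on which everything turns is that $C_i$ is moreover \emph{dense} in $B^*(k)$, i.e. that each $C_i$ is a dense copy of $Q(k)^\omega$. Granting this, the corollary is immediate from dense homogeneity.

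Indeed, by Corollary~\ref{c-dhm} the space $B^*(k)$ is densely homogeneous with respect to $Q(k)^\omega$, so there is a homeomorphism $h\colon B^*(k)\to B^*(k)$ with $h(C_1)=C_2$. Then $h(X_1)=h(B^*(k)\setminus C_1)=B^*(k)\setminus C_2=X_2$, and hence the restriction $h|X_1\colon X_1\to X_2$ is the required homeomorphism. This is the whole argument in the dense case, and it is the form I expect to be the intended proof.

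The real work is therefore to guarantee that the complements are \emph{dense} copies of $Q(k)^\omega$, since this is exactly the hypothesis consumed by Corollary~\ref{c-dhm}; I expect this to be the main obstacle, and it is genuinely necessary, not a technicality (a copy of $Q(k)^\omega$ confined to a proper clopen piece, with dense complement, produces an $X_i$ that is not even homeomorphic to the dense-complement case). If one only knows that $C_i$ is codense, the natural reduction is to pass to the closure: $\overline{C_i}$ is a closed subset of $B^*(k)$ in which $C_i$ is dense, and since $C_i$ is weight-homogeneous of weight $k$ and nowhere of weight $<k$ (it lies in $\mathscr{E}_k$ and is nowhere $\sigma LW({<}k)+\mathscr{G}_{\delta\sigma}$), the Stone theorem yields $\overline{C_i}\approx B^*(k)$. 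Applying dense homogeneity inside $\overline{C_1}\approx\overline{C_2}$ then carries $C_1$ onto $C_2$ and hence $\overline{C_1}\setminus C_1$ onto $\overline{C_2}\setminus C_2$; the delicate remaining step would be to match the complementary open pieces $B^*(k)\setminus\overline{C_i}\subseteq X_i$ and glue along the common boundary, for which one would invoke the extension machinery of Lemma~\ref{eh} together with the $h$-homogeneity of $B^*(k)$. In the setting intended here, where $C_i$ is a dense copy of $Q(k)^\omega$, this difficulty evaporates and the one-line argument of the previous paragraph suffices.
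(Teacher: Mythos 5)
Your one-line argument --- apply Corollary~\ref{c-dhm} to obtain an autohomeomorphism of $B^*(k)$ carrying $B^*(k)\setminus X_1$ onto $B^*(k)\setminus X_2$, then note that it automatically carries $X_1$ onto $X_2$ --- is exactly the paper's proof. Your observation that the complements must be \emph{dense} copies of $Q(k)^\omega$ for Corollary~\ref{c-dhm} to apply is a legitimate point that the paper passes over silently, but density of the complements is the intended reading (it holds in Definition~\ref{d-1} and in every application of this corollary, e.g.\ in Theorem~\ref{t-aR}), so the extra machinery you sketch for the non-dense case is not needed.
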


\begin{proof}
By Corollary~\ref{c-dhm} there exists a homeomorphism $f: B^*(k)
\rightarrow B^*(k)$ such that $f(B^*(k) \setminus X_1) = B^*(k)
\setminus X_2$. Then $f(X_1) = X_2$.
\end{proof}

\begin{cor}\label{c-hko}
Let $F$ be an $F_{\sigma \delta}$-subset of the Baire space
$B^*(k)$. Then $F \times Q(k)^\omega \approx Q(k)^\omega$ and $F$
is homeomorphic to a closed subset of $Q(k)^\omega$.
\end{cor}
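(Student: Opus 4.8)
The plan is to establish the first assertion $F \times Q(k)^\omega \approx Q(k)^\omega$ by checking that the product $Y = F \times Q(k)^\omega$ satisfies all the hypotheses of Theorem~\ref{t-qR}, and then to read off the second assertion as an immediate corollary. I may assume $F \neq \emptyset$ (otherwise both claims are vacuous); the case $k = \omega$ is subsumed under van Engelen's results as recorded in the proof of Theorem~\ref{t-qR}, so the argument below is stated so as to apply uniformly.

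First I dispose of the routine hypotheses for $Y$. Since $B^*(k)$ is completely metrizable and $F \in \mathscr{F}_{\sigma\delta}(B^*(k))$, the set $F$ is an absolute $F_{\sigma\delta}$-set; choosing completely metrizable hulls $\widetilde{F} \supseteq F$ and $\widetilde{Q} \supseteq Q(k)^\omega$ and writing $Y = (F \times \widetilde{Q}) \cap (\widetilde{F} \times Q(k)^\omega)$ as an intersection of continuous preimages under the two projections, and using that $\mathscr{F}_{\sigma\delta}$ is stable under continuous preimages and finite intersections, I obtain $Y \in \mathscr{F}_{\sigma\delta}$. Because $Q(k)^\omega$ is of first category in itself, so is $Y$: if $Q(k)^\omega = \bigcup \{N_i : i \in \omega\}$ with each $N_i$ nowhere dense, then $Y = \bigcup \{F \times N_i : i \in \omega\}$ with each $F \times N_i$ nowhere dense in $Y$. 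Both factors are strongly zero-dimensional metrizable, and a finite product of such spaces is strongly zero-dimensional, so $\mathrm{Ind}\,Y = 0$. Finally $w(Y) = \max\{w(F), w(Q(k)^\omega)\} = k$, since $w(F) \leq w(B^*(k)) = k$ and $w(Q(k)^\omega) = k$.

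The crucial hypothesis, and the only one requiring real work, is that $Y$ is nowhere $\sigma LW({<}k) + \mathscr{G}_{\delta\sigma}$. First I observe that this property is inherited by open subspaces: if $U = L \cup G$ with $L \in \sigma LW({<}k)$ and $G \in \mathscr{G}_{\delta\sigma}$ and $W \subseteq U$ is open, then $L \cap W \in \sigma LW({<}k)$ (any subspace of a locally-weight-${<}k$ space is again locally of weight ${<}k$) and $G \cap W \in \mathscr{G}_{\delta\sigma}$ (an open subset of $G$ is $F_\sigma$ in $G$, hence a countable union of closed, and so absolute $G_{\delta\sigma}$, subsets of $G$, and $\mathscr{G}_{\delta\sigma}$ is closed under countable unions). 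Hence it suffices to show that a nonempty basic box $W_F \times W_Q \subseteq Y$, with $W_F$ open in $F$ and $W_Q$ open in $Q(k)^\omega$, does not lie in $\sigma LW({<}k) + \mathscr{G}_{\delta\sigma}$. I argue by slicing: suppose $W_F \times W_Q = L \cup G$ with $L \in \sigma LW({<}k)$ and $G \in \mathscr{G}_{\delta\sigma}$, and fix $p \in W_F$. On the slice $\{p\} \times W_Q \approx W_Q$, the trace $L \cap (\{p\} \times W_Q)$ is again $\sigma LW({<}k)$, while $G \cap (\{p\} \times W_Q)$ is the image under the homeomorphism $q \mapsto (p, q)$ of $\{q \in W_Q : (p,q) \in G\}$, which is the preimage of $G$ under a continuous map and hence absolute $G_{\delta\sigma}$. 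Thus $W_Q \in \sigma LW({<}k) + \mathscr{G}_{\delta\sigma}$, contradicting the fact (recorded in the discussion of $\mathscr{X}_k$ preceding Theorem~\ref{t-qR}) that $Q(k)^\omega$ is nowhere $\sigma LW({<}k) + \mathscr{G}_{\delta\sigma}$. This contradiction verifies the hypothesis, and Theorem~\ref{t-qR} yields $Y \approx Q(k)^\omega$.

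The second assertion then follows at once: fix any $q_0 \in Q(k)^\omega$. As $Q(k)^\omega$ is metric, $\{q_0\}$ is closed, so $F \times \{q_0\}$ is a closed subset of $F \times Q(k)^\omega$ homeomorphic to $F$; composing with the homeomorphism $F \times Q(k)^\omega \approx Q(k)^\omega$ just obtained exhibits $F$ as homeomorphic to a closed subset of $Q(k)^\omega$. The main obstacle throughout is the verification of the nowhere-$\sigma LW({<}k) + \mathscr{G}_{\delta\sigma}$ property for the product; the slicing argument reduces it to the corresponding property of the single factor $Q(k)^\omega$, and the delicate point is the bookkeeping of which ambient class, $\sigma LW({<}k)$ or $\mathscr{G}_{\delta\sigma}$, is preserved upon restriction to a slice.
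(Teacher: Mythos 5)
Your proposal is correct and follows essentially the same route as the paper, whose entire proof is the one-line assertion that $F \times Q(k)^\omega$ satisfies the hypotheses of Theorem~\ref{t-qR}; your slicing argument supplies exactly the verification the paper leaves to the reader. One small imprecision: the slice $G \cap (\{p\} \times W_Q)$ is absolute $G_{\delta\sigma}$ not because it is a continuous preimage of $G$ (continuous preimages do not in general preserve absolute Borel classes), but because it is a closed subspace of the absolute $G_{\delta\sigma}$-set $G$ --- the same mechanism you already invoke correctly in the open-hereditary step.
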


\begin{proof}
One can verifies that the product $X = F \times Q(k)^\omega$
satisfies the conditions of Theorem~\ref{t-qR}. Then $X  \approx
Q(k)^\omega$.  Clearly, $F$ is homeomorphic to a closed subset of
$F \times Q(k)^\omega \approx Q(k)^\omega$.
\end{proof}

\section{Some homogeneous $F_{\sigma \delta}$- and $G_{\delta
\sigma}$-subsets of $B(k)$}

In this Section we give  topological characterizations of the
products $Q(\tau) \times Q(k)^\omega$ and their dense complements
$A^{II}_{\tau, k}$ in the Baire space $B(\tau)$, where $ \omega
\leq k \leq \tau$. We also investigate the products $Q(\tau)
\times A^{II}_{k, k}$. All these spaces are $h$-homogeneous.
Notice that this list of $h$-homogeneous $F_{\sigma \delta}$- and
$G_{\delta \sigma}$-subsets of $B(\tau)$ is not complete.

Theorems~\ref{t-vqx} and~\ref{t-vqx2} are Hurewicz-type theorems
for $F_{\sigma \delta}$-subsets of the Baire space $B(k)$.

To establish these results we shall use the following statements.

\begin{thm}~\cite[Theorem 3.5]{msu}\label{t-msu}
Let $Y$ be an $h$-homogeneous space and $X$ be a
weight-homogeneous space of weight $\tau$. Suppose $X$ is of first
category, $X \in \sigma LF(Y)$, and every nonempty clopen subset
of $X$ contains a \ndc copy of $Y$. Then $X$ is homeomorphic to
$Q(\tau) \times Y$.
\end{thm}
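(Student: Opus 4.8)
The plan is to build the homeomorphism by the parallel-induction technique used in the proof of Theorem~\ref{tqc}, taking $Q(\tau)\times Y$ as an explicit model and matching a canonical clopen scheme on $X$ with the corresponding scheme on the model. First I would record the model's structure. Writing $Q(\tau)=\bigcup_n F_n$ with $F_n=\{x\in Q(\tau): x_m=0 \text{ for all } m\ge n\}$, each $F_n$ is discrete and \ndc in $Q(\tau)$, so $Q(\tau)\times Y=\bigcup_n(F_n\times Y)$ exhibits first category, every $F_n\times Y$ being a $\sigma$-discrete sum of copies of $Y$; in particular $Q(\tau)\times Y\in\sigma LF(Y)$, it is weight-homogeneous of weight $\tau$ (note $w(Y)\le w(X)=\tau$, since a \ndc copy of $Y$ in $X$ has weight ${\le}\tau$), and each fibre $\{q\}\times Y$ is a \ndc copy of $Y$; thus the model itself satisfies the hypotheses. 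For $s\in\tau^{<\omega}$ put $U_s=[s]\times Y$, where $[s]=\{q\in Q(\tau): q{\upharpoonright}|s|=s\}$, and $A_s=\{s\hat{\,}0^\infty\}\times Y$. Then $U_s$ is clopen, $\bigcap_n U_{s\hat{\,}0^n}=A_s\approx Y$, and $U_s\setminus A_s$ is the $\sigma$-discrete clopen family $\{U_{s\hat{\,}0^n\hat{\,}a}: n\in\omega,\ a\in\tau\setminus\{0\}\}$, which accumulates on $A_s$. Since every $q\in Q(\tau)$ equals $s\hat{\,}0^\infty$ for some finite $s$, the spines exhaust the model, $\bigcup_s A_s=Q(\tau)\times Y$, while along every infinite branch of this cell-tree the intersection is empty, the branch coding a non-eventually-zero point of $\tau^\omega$. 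This last point rests on $Q(\tau)\times Y$ being meager, hence not topologically complete, so that shrinking cells may lose their limit point into the completion.

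Next I would reproduce this scheme inside an arbitrary $X$ satisfying the hypotheses. Since every nonempty clopen $V\subseteq X$ contains a \ndc copy of $Y$, I take such a copy $A$ and, using $\mathrm{Ind}\,X=0$, surround it by a decreasing sequence $O_i\downarrow A$ of clopen sets, cutting the annuli $O_i\setminus O_{i+1}$ into discrete clopen families of mesh tending to $0$, exactly as in the proofs of Lemmas~\ref{as} and~\ref{ad}; weight-homogeneity lets each family be taken of cardinality $\tau$. Each cell is clopen in $X$, hence again weight-homogeneous of weight $\tau$, of first category, a member of $\sigma LF(Y)$, and still contains \ndc copies of $Y$ in every clopen subset, so the construction recurses. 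Indexing the tree by the $\nu$-stratification of $\tau^{<\omega}$ as in Theorem~\ref{tqc}, I obtain cells $\{V_s\}$ and spines $\{A_s\approx Y\}$ running in parallel with $\{U_s\}$, $\{A_s'\}$ on the model. To force $\bigcup_s A_s=X$, with no stray surviving point, I interleave a first-category filtration $X=\bigcup_m C_m$ into the bookkeeping so that every point of $C_m$ is peeled onto some spine by a bounded stage, just as condition (c7) of Lemma~\ref{ad} guarantees capture along terminating branches.

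The homeomorphism is then assembled level by level. Fixing homeomorphisms $A_s\to A_s'$, I extend each over its cell by Lemma~\ref{eh}, after verifying that $V_s\setminus A_s$ and $U_s\setminus A_s'$ are homeomorphic $h$-homogeneous spaces; the $\varepsilon$-controlled versions, Lemmas~\ref{q2} and~\ref{q3}, keep $\varrho_2(f_n,f_{n-1})+\varrho_1(f^{-1}_n,f^{-1}_{n-1})<n^{-1}$, so that $f=\lim_n f_n$ and $f^{-1}=\lim_n f^{-1}_n$ are continuous and mutually inverse, precisely as in the passage to the limit in Theorem~\ref{tqc}. That the spines exhaust both spaces, together with this convergence, yields $f(X)=Q(\tau)\times Y$.

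The main obstacle is the two-scale control absent from Theorem~\ref{tqc}: there the branch intersections are single points and all diameters shrink to $0$, whereas here each spine $A_s\approx Y$ has positive diameter, so the cells may be shrunk only transversally to the $Y$-direction. One must simultaneously keep the $Y$-fibres intact while collapsing the $Q(\tau)$-direction; verify at every node that the cell-minus-spine pieces are $h$-homogeneous and mutually homeomorphic, so that Lemma~\ref{eh} actually applies (this is where $h$-homogeneity of $Y$, the recursive self-similarity of the cells, and the device of carrying out the gluing inside a surrounding $h$-homogeneous complete space, as the construction in Theorem~\ref{tqc} is carried out inside $B(k)$, are needed); and prove that the absorption is complete, every point of $X$ landing on exactly one spine. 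Here the two structural hypotheses play complementary roles: the condition $X\in\sigma LF(Y)$ furnishes the upper bound, forcing $X$ to be locally a closed piece of $Y$ so that no unexpected fibres appear, while the ubiquity of \ndc copies of $Y$ furnishes the lower bound; weight-homogeneity together with Theorem~\ref{tq} identifies the index structure of the spines as $Q(\tau)$, delivering the product $Q(\tau)\times Y$ rather than a mere $\sigma$-discrete heap of copies of $Y$.
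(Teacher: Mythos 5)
Note first that the paper contains no proof of this statement: Theorem~\ref{t-msu} is imported verbatim from \cite[Theorem 3.5]{msu}, so there is no in-paper argument to compare yours against, and I can only measure your outline against the machinery the paper does develop (Lemmas~\ref{eh}, \ref{q2}, \ref{q3} and the scheme of Theorem~\ref{tqc}). Your analysis of the model is correct: the cells $U_s=[s]\times Y$, the spines $A_s=\{s\hat{\,}0^\infty\}\times Y$, the fact that the spines exhaust $Q(\tau)\times Y$ while every non-terminating branch of the cell tree has empty intersection, and the identification of which hypothesis supplies the ``upper'' and ``lower'' bounds are all right. But the two steps you merely assert are exactly where the proof lives, and as written neither goes through.

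The first gap is absorption. You propose to ``interleave a first-category filtration $X=\bigcup_m C_m$ so that every point of $C_m$ is peeled onto some spine by a bounded stage,'' but the pieces of an arbitrary first-category filtration are just \ndc sets; there is no reason that $A_s\cup(C_m\cap V_s)$ sits inside a \ndc copy of $Y$, so you cannot enlarge a spine to swallow $C_m$. What is needed is an enlargement lemma: if $A$ is a \ndc copy of $Y$ in a clopen $V$ and $B$ is a \ndc subset of $V$ belonging to $LF(Y)$, then $A\cup B$ is contained in a \ndc copy of $Y$. This is where $Y\approx Y\oplus Y$ (a consequence of $h$-homogeneity) and, crucially, the hypothesis $X\in\sigma LF(Y)$ must actually be deployed --- the $\sigma LF(Y)$ decomposition drives the absorption, while the first-category decomposition only guarantees that spines can be taken nowhere dense and cells shrink; you invoke $\sigma LF(Y)$ only as a vague ``upper bound,'' so the claim $\bigcup_s A_s=X$ is unsupported. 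The second gap is in the gluing. Lemma~\ref{eh} at a node requires $V_s\setminus A_s$ and $U_s\setminus A_s'$ to be homeomorphic $h$-homogeneous spaces; on the model side $U_s\setminus A_s'$ is a $\sigma$-discrete clopen union of copies of $Q(\tau)\times Y$ and this is fine, but on the $X$ side the assertion that $V_s\setminus A_s$ is homeomorphic to that is essentially the conclusion of the theorem, so a one-shot application of Lemma~\ref{eh} per node is circular. One must instead run the approximation scheme of Theorem~\ref{tqc}; yet Lemmas~\ref{q2} and~\ref{q3} require the ambient spaces to be $h$-homogeneous, which $X$ is not assumed to be, and if you pass to a completion to repair this the spines $A_s\approx Y$ are in general no longer closed there, breaking both lemmas' hypotheses and the limit argument. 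Arranging stabilization of $f_n$ on each spine once it is built (in the spirit of conditions (d8)--(d10)) without these crutches is the substance of the proof, and it is missing from your outline.
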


\begin{thm}\label{tm}~\cite[Theorem 3.13]{msu} or~\cite[Theorem
4]{Med10} Suppose $X$ and $Y$ are $h$-homogeneous spaces of first
category such that $w(X) = w(Y)$, $Y \in \sigma LF(X)$, and $X \in
\sigma LF(Y)$. Then $X$ is homeomorphic to $Y$.
\end{thm}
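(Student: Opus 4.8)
The plan is to apply Theorem~\ref{t-msu} in both directions and then absorb the resulting factor $Q(k)$. Set $k=w(X)=w(Y)$. Since $X$ and $Y$ are $h$-homogeneous they are weight-homogeneous of weight $k$, and by the standing hypotheses they are metrizable, strongly zero-dimensional, and of first category. To invoke Theorem~\ref{t-msu} with the $h$-homogeneous space $Y$ and the weight-homogeneous space $X$ (of weight $\tau=k$), the assumption $X\in\sigma LF(Y)$ is given, so it remains only to check that every nonempty clopen subset of $X$ contains a \ndc copy of $Y$. Symmetrically I would apply the theorem with the roles of $X$ and $Y$ interchanged, using the given $Y\in\sigma LF(X)$. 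This would yield
\[
X\approx Q(k)\times Y,\qquad Y\approx Q(k)\times X .
\]

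First I would dispose of the clopen-subset condition. Because $X$ is $h$-homogeneous, every nonempty clopen subset of $X$ is homeomorphic to $X$, so the requirement ``every nonempty clopen subset of $X$ contains a \ndc copy of $Y$'' is equivalent to the single statement that $X$ contains a \ndc copy of $Y$; likewise the other application requires a \ndc copy of $X$ inside $Y$. By the symmetry of the hypotheses it suffices to produce one such embedding, say a closed nowhere dense copy of $Y$ in $X$, from the assumption $Y\in\sigma LF(X)$.

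Constructing this copy is where I expect the real work to lie. Writing $Y=\bigcup_i P_i$ with each $P_i$ closed in $Y$ and in $LF(X)$, and intersecting with a tower of nowhere dense closed sets witnessing that $Y$ is of first category, I may assume that each $P_i$ is moreover nowhere dense in $Y$ (a closed subset of an $LF(X)$-set is again in $LF(X)$). Strong zero-dimensionality lets me refine the local embeddings supplied by $LF(X)$ to a $\sigma$-discrete clopen partition $P_i=\bigsqcup_\xi C_{i\xi}$ in which every $C_{i\xi}$ is homeomorphic to a closed subset $D_{i\xi}$ of $X$. Using the $h$-homogeneity of $X$ to split $X$ into arbitrarily fine discrete clopen families of copies of itself, I would place disjoint closed copies of the $D_{i\xi}$ into such pieces and arrange, by a fan/telescope construction in the spirit of Lemma~\ref{ad}, that these copies accumulate exactly as the $C_{i\xi}$ do in $Y$; the closure of their union inside a suitably thin part of $X$ should then be a closed nowhere dense copy of $Y$, with Lemma~\ref{eh} serving to match the individual copies. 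The delicate point, and the main obstacle, is to reproduce the \emph{global} topology of $Y$ — in particular the manner in which the pieces $P_i$ and their clopen parts converge — out of the independently chosen copies sitting inside $X$; this is precisely the type of controlled limiting argument already carried out in Section~1.

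Granting the two embeddings, Theorem~\ref{t-msu} delivers $X\approx Q(k)\times Y$ and $Y\approx Q(k)\times X$. Finally I would observe that $Q(k)\times Q(k)$ is a $\sigma$-discrete metric weight-homogeneous space of weight $k$, whence $Q(k)\times Q(k)\approx Q(k)$ by Theorem~\ref{tq}. Substituting,
\[
X\approx Q(k)\times Y\approx Q(k)\times\bigl(Q(k)\times X\bigr)\approx\bigl(Q(k)\times Q(k)\bigr)\times X\approx Q(k)\times X\approx Y,
\]
which is the desired conclusion.
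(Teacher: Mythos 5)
The paper does not actually prove Theorem~\ref{tm}; it imports it from \cite{msu} and \cite{Med10}, so there is no in-paper argument to measure yours against. Taken on its own terms, your outer skeleton is sound: applying Theorem~\ref{t-msu} in both directions, using $h$-homogeneity to reduce the clopen-subset hypothesis to a single embedding, and absorbing the factor via $Q(k)\times Q(k)\approx Q(k)$ (which does follow from Theorem~\ref{tq}, since a product of two $\sigma$-discrete weight-homogeneous spaces of weight $k$ is again such a space) would indeed give $X\approx Q(k)\times Y\approx Q(k)\times Q(k)\times X\approx Q(k)\times X\approx Y$.

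The genuine gap is exactly the step you defer: deducing from $Y\in\sigma LF(X)$ that $X$ contains a \ndc copy of $Y$. This is not a bookkeeping matter but the entire content of the theorem's difficulty, and your sketch does not work as written. Placing disjoint closed copies of the pieces $C_{i\xi}$ inside discrete clopen families in $X$ and then taking ``the closure of their union'' cannot yield a copy of $Y$: the closure will in general contain limit points belonging to no $C_{i\xi}$, so the resulting set is strictly larger than the disjoint union you assembled, and there is no reason it should be homeomorphic to $Y$ unless the union is \emph{already} closed. Arranging that the union is closed and simultaneously that a single global homeomorphism onto $Y$ exists (not merely piecewise homeomorphisms of the individual $C_{i\xi}$, which need not glue) requires controlling, at every finite stage, the diameters, the discreteness of the families, and the exact closure of the union --- precisely the kind of inductive convergence argument carried out in Theorem~\ref{hf} and Lemma~\ref{ad}, with conditions analogous to (a4) and (a15) dictating what the closure of each partial union is. Until that lemma is actually proved, the hypotheses of Theorem~\ref{t-msu} are not verified in either direction and the argument does not close.
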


The last theorem implies the following corollary.

\begin{cor}\label{c-xhq}
Let $X$ be an $h$-homogeneous spaces of first category and $w(X)
\geq k$. Then $X \approx Q  \times X \approx Q(k)  \times X$.
\end{cor}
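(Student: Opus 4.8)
The plan is to realize $X$ and $Q(k)\times X$ as the two spaces in Theorem~\ref{tm}, whose hypotheses are that both spaces be $h$-homogeneous and of first category, have equal weight, and each belong to $\sigma LF$ of the other. Most of these are routine to check. Since $X$ is $h$-homogeneous it is weight-homogeneous, and as $k\le w(X)$ the product $Q(k)\times X$ is weight-homogeneous of weight $\max\{k,w(X)\}=w(X)$; it is of first category because $Q(k)$ is $\sigma$-discrete, say $Q(k)=\bigcup_n D_n$ with each $D_n$ discrete and closed, so that $Q(k)\times X=\bigcup_n(D_n\times X)$ is a countable union of closed nowhere dense sets. The two $\sigma LF$ conditions are also easy: each $D_n\times X$ is closed in $Q(k)\times X$ and lies in $LF(X)$, since every point has the clopen neighbourhood $\{d\}\times X\approx X$, so $Q(k)\times X\in\sigma LF(X)$; conversely $X\approx\{q\}\times X$ is a closed subset of $Q(k)\times X$, whence $X\in LF(Q(k)\times X)$.

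The one genuinely nontrivial hypothesis, and the main obstacle, is the $h$-homogeneity of $Q(k)\times X$. I would establish it by applying Theorem~\ref{t-msu} to an arbitrary nonempty clopen $C\subseteq Q(k)\times X$, with $Y$ taken to be $X$ and $\tau=w(X)$. Being clopen in the weight-homogeneous first-category space $Q(k)\times X$, the set $C$ is itself weight-homogeneous of weight $w(X)$ and of first category, and it inherits membership in $\sigma LF(X)$ (a clopen slice of a member of $\sigma LF(X)$ again lies in $\sigma LF(X)$). It remains to see that every nonempty clopen subset of $C$ contains a \ndc copy of $X$; this is exactly where the special structure of the factor $Q(k)$ enters. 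Such a clopen set contains a basic box $U_1\times U_2$ with $U_1$ clopen in $Q(k)$ and $U_2$ clopen in $X$, and then for any $q\in U_1$ the slice $\{q\}\times U_2\approx U_2\approx X$ is closed and, because $Q(k)$ has no isolated points, nowhere dense. Theorem~\ref{t-msu} therefore yields $C\approx Q(w(X))\times X$ for every nonempty clopen $C$, including $C=Q(k)\times X$ itself; hence all nonempty clopen subsets of $Q(k)\times X$ are mutually homeomorphic, i.e.\ $Q(k)\times X$ is $h$-homogeneous.

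With this in hand, Theorem~\ref{tm} applies to the pair $(X,\,Q(k)\times X)$ and delivers $X\approx Q(k)\times X$. The entire argument runs verbatim with $k$ replaced by any cardinal $k'$ satisfying $\omega\le k'\le w(X)$; taking $k'=\omega$ is legitimate because an $h$-homogeneous space of first category has no isolated points and is therefore infinite, so $w(X)\ge\omega$. This gives $X\approx Q(\omega)\times X=Q\times X$, while $k'=k$ gives $X\approx Q(k)\times X$, and combining the two homeomorphisms yields $X\approx Q\times X\approx Q(k)\times X$, as required.
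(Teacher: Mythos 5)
Your proof is correct and follows the route the paper intends: the corollary is stated there as an immediate consequence of Theorem~\ref{tm}, applied exactly as you do to the pairs $(X,\,Q(k)\times X)$ and $(X,\,Q\times X)$, and your verifications of first category, equal weight, and the two $\sigma LF$ conditions are sound. The one ingredient the paper would obtain by citation rather than argument is the $h$-homogeneity of $Q(k)\times X$ --- elsewhere it invokes the fact that a product of two $h$-homogeneous spaces is $h$-homogeneous (Lemma~2.3 of \cite{msu}) --- whereas you derive it internally from Theorem~\ref{t-msu}; both are legitimate, and your version has the advantage of staying within results stated in the paper.
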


\begin{thm}\label{t-emb}
Let $\omega < k \leq \tau$. Let the Baire space $B(k)$ be densely
homogeneous with respect to an $h$-homogeneous space $Y$. Then the
Baire space $B(\tau)$ is densely homogeneous with respect to the
product $Q(\tau) \times Y$.
\end{thm}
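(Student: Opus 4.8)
The plan is to build the required autohomeomorphism of $B(\tau)$ by an $\omega$-stage back-and-forth construction, entirely analogous to the one carried out in the proof of Theorem~\ref{tqc}, in which the hypothesised dense homogeneity of $B(k)$ with respect to $Y$ supplies the individual ``fiber'' matches and the Baire-space extension lemmas (Lemmas~\ref{eh}, \ref{q2}, \ref{q3}) glue them together. First I unwind the definition: it suffices to show that for any two dense subsets $D_1,D_2\subseteq B(\tau)$ with $D_1\approx D_2\approx Q(\tau)\times Y$ there is a homeomorphism $f:B(\tau)\to B(\tau)$ with $f(D_1)=D_2$. I record the structural facts I shall use: $Q(\tau)\times Y$ is $h$-homogeneous, weight-homogeneous of weight $\tau$, of first category, and decomposes as $Q(\tau)\times Y=\bigcup_n (D_n\times Y)$, where $D_n=\{x\in Q(\tau):x_m=0\ \text{for}\ m\geq n\}$ is clopen and discrete of cardinality $\tau$. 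Hence each $D_i\approx Q(\tau)\times Y$ splits into a $\sigma$-discrete (in $D_i$) family of clopen copies of $Y$, arranged along a tree exactly as in Lemma~\ref{ad}.

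The engine of the construction is the following atomic step. Suppose at some node a clopen set $V_1\approx B(\tau)$ of the source has been matched to a clopen set $V_2\approx B(\tau)$ of the target. I choose inside $V_i$ a \ndc copy $K_i\approx B(k)$ of the Baire space (automatically nowhere dense when $k<\tau$, and chosen nowhere dense when $k=\tau$) so that $K_i\cap D_i$ is a \emph{dense} copy of $Y$ in $K_i$. Since $B(k)$ is densely homogeneous with respect to $Y$, there is a homeomorphism $h:K_1\to K_2$ with $h(K_1\cap D_1)=K_2\cap D_2$. Now $V_i\setminus K_i$ is open, hence completely metrizable, weight-homogeneous of weight $\tau$, and nowhere locally compact, so $V_i\setminus K_i\approx B(\tau)$; in particular $V_1\setminus K_1$ and $V_2\setminus K_2$ are homeomorphic $h$-homogeneous spaces. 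Thus Lemma~\ref{q3} extends $h$ to a homeomorphism $V_1\to V_2$ that agrees with the previously built map off a controlled neighborhood and keeps the two-sided distance control needed for convergence, as in condition (d10) of Theorem~\ref{tqc}. Iterating this over all nodes, refining meshes to $0$ as in conditions (d1)--(d10), produces a Cauchy sequence $\{f_n\}$ whose limit $f$ and inverse limit $g=\lim f_n^{-1}$ are mutually inverse homeomorphisms. The mesh control forces, for each $x\in D_1$, the nested fiber-pieces containing $x$ to have images that are fiber-pieces of $D_2$ shrinking to a single point of $D_2$, so $f(D_1)\subseteq D_2$; the symmetric argument for $g$ gives $f(D_1)=D_2$, while points of $B(\tau)\setminus D_1$ are carried to $B(\tau)\setminus D_2$ because the ``empty-fiber'' part over $B(\tau)\setminus Q(\tau)$ is matched at every level.

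The main obstacle is the thickening used in the atomic step: given a clopen copy $W\approx Y$ of a fiber of $D_i\cap V_i$, I must realize it as a dense subset of a \ndc Baire copy $K_i\approx B(k)$ lying in $V_i$, and do so coherently along the whole tree so that the family $\{K_\alpha\}$ is $\sigma$-discrete, has the disjoint-closure behaviour of (c4) and (d2)--(d3), and assembles into a copy of $B(\tau)$ — its union being dense but not closed, which is precisely why a single extension does not suffice and the limit process is needed. I expect this to be handled as in Lemma~\ref{ad} and Theorem~\ref{hf}: the copies $K_\alpha$ are produced by the Stone theorem with weight exactly $k$, retractions onto them supply the surrounding discrete clopen families, and the weight-homogeneity of the relevant pieces together with $k>\omega$ lets one split fibers so that each level carries $\tau$ discrete copies. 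The one genuinely new point to verify is that the chosen $K_\alpha$ meets $D_i$ in a \emph{dense} copy of $Y$; this is arranged by selecting $W$ from the canonical clopen-$Y$ decomposition of $D_i\cap V_i$ furnished by Theorem~\ref{t-msu} and taking $K_\alpha$ inside the closure of $W$. Once the thickening is in place, the dense homogeneity of $B(k)$ with respect to $Y$ together with Lemmas~\ref{eh} and~\ref{q3} do the rest.
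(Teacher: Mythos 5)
The paper does not actually prove Theorem~\ref{t-emb}: it is quoted from \cite[Lemma 5.3]{msu} (announced in \cite{M-f86}), so there is no in-paper argument to measure yours against. Judged on its own terms, your plan --- a back-and-forth approximation in the style of Theorem~\ref{tqc}, in which clopen pieces of the fibers $\{q\}\times Y$ of $D_i\approx Q(\tau)\times Y$ are thickened to \ndc copies of $B(k)$, matched using the hypothesised dense homogeneity of $B(k)$ with respect to $Y$, and glued by Lemmas~\ref{eh} and~\ref{q3} with mesh control --- is the right one, and its key new observation is sound: a clopen piece $W$ of a fiber is closed in $D_i$, so $\overline{W}\cap D_i=W$, and since $Y$ embeds densely in $B(k)$ it is weight-homogeneous of weight $k>\omega$, whence $\overline{W}\approx B(k)$ by the Stone characterization (note that for $k>\omega$ the relevant criterion is that every nonempty open subset has weight $k$, not ``nowhere local compactness''); thus $W$ sits in $\overline{W}$ as a dense copy of $Y$ and the atomic matches exist.

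Two points need more care than you give them. First, a small one: $D_n=\{x\in Q(\tau):x_m=0 \text{ for } m\ge n\}$ is closed and discrete in $Q(\tau)$ but not open, so the levels $D_n\times Y$ are not clopen; this affects only phrasing. Second, and substantively: you cannot thicken and match \emph{whole} fibers (or whole levels $D_n\times Y$) at once, because the closures in $B(\tau)$ of two distinct fibers, while meeting $D_i$ in disjoint sets, may intersect --- and even accumulate on one another --- inside $B(\tau)\setminus D_i$; freezing a match on one such closure would then over-constrain the next. The atomic step must therefore operate on small pieces $U\cap(\{q\}\times Y)$ cut out by a discrete clopen family $\{U\}$ of $B(\tau)$ chosen inside the complement of everything already frozen, so that disjointness and discreteness of the thickened $B(k)$-copies come for free, and a single fiber is exhausted only over countably many sub-stages. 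This is precisely the role of the $\nu$-bookkeeping and conditions (d2)--(d6) in Theorem~\ref{tqc} and of the retraction/annulus construction of Lemma~\ref{as}; your proposal gestures at this, but the step ``choose $K_i$ so that $K_i\cap D_i$ is a dense copy of $Y$'', applied fiber-by-fiber as written, would fail without it. With that repair, and with the standard verification that the leftover $(D_i\cap V)\setminus W$ is again a copy of $Q(\tau)\times Y$ (a countable disjoint union of clopen copies, using Theorem~\ref{tq}), the outline goes through.
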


Van Engelen (see~\cite[ Theorem 4.7]{En-B2}) proved that if the
Cantor set $\mathscr{C}$ is densely homogeneous with respect to an
$h$-homogeneous space $A$, then $\mathscr{C}$ is densely
homogeneous with respect to the product $Q \times A$.
Theorem~\ref{t-emb} generalize this result for the non-separable
case. It was announced in~\cite{M-f86} and proved in~\cite{msu}
(see \cite[Lemma 5.3]{msu}).

\begin{defin}\label{d-1}
In this Section we shall often denote the space $Q(k)^\omega$ by
$M^I_k$. Let us introduce the space $A^{II}_k$. Fix a dense copy
$A$ of $Q(k)^\omega$ in $B^*(k)$ and put $A^{II}_k = B^*(k)
\setminus A$. Let $A^I_k = Q \times A^{II}_k$ and $M^{II}_k =
B^*(k) \setminus Y$, where $Y$ is a dense copy of $A^I_k$ in
$B^*(k)$.

Notice that the spaces $M^I_k$ and $A^I_k$  are of first category.
The spaces $M^{II}_k$ and $A^{II}_k$ contain  dense topologically
complete subspaces. Clearly, $M^I_k$ and $M^{II}_k$ are absolute
$\mathscr{F}_{\sigma \delta }$-sets. The spaces $A^I_k$ and
$A^{II}_k$ are absolute $G_{\delta \sigma}$-sets. Furthermore,
each of the spaces $M^I_k$, $M^{II}_k$, $A^I_k$, and $A^{II}_k$ is
a weight-homogeneous space of weight $k$.
\end{defin}

\begin{thm}\label{t-aR}
Let $X$ be an absolute $G_{\delta \sigma}$-set such that
$\mathrm{Ind}X= 0$, $X$ is nowhere $\mathscr{F}_{\sigma \delta }
\setminus \sigma LW({<}k)$, $X$ contains a dense topologically
complete subspace, and $w(X)=k$. Then $X \approx A^{II}_k$.
\end{thm}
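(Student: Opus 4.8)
The plan is to realize $X$ as a dense subset of a copy of the Baire space $B^*(k)$ whose complement is a copy of $Q(k)^\omega$, and then to invoke Corollary~\ref{c-a2}. Recall that $X$ is weight-homogeneous of weight $k$ (this is part of the standing setting for these spaces; cf.\ Definition~\ref{d-1}). Since $\mathrm{Ind}X=0$ and $w(X)=k$, I first embed $X$ into $B(k)$ and pass to the closure $Z=\overline{X}$. Then $Z$ is closed in $B(k)$, hence completely metrizable and strongly zero-dimensional; it is weight-homogeneous of weight $k$ because $X$ is dense in it, and it is nowhere locally compact, since a compact neighborhood would be separable of weight ${\leq}\omega<k$. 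By the standard characterization of the Baire space of weight $k$, $Z\approx B^*(k)$, so I may regard $X$ as a dense subset of $B^*(k)=Z$ and set $Y=Z\setminus X$. By Corollary~\ref{c-a2} it then suffices to prove $Y\approx Q(k)^\omega$: for then $X$ and $A^{II}_k$ are dense subsets of $B^*(k)$ whose complements are homeomorphic to $Q(k)^\omega$, whence $X\approx A^{II}_k$.

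To identify $Y$ I verify the hypotheses of Theorem~\ref{t-qR}. Because $X$ is an absolute $G_{\delta\sigma}$-set, it sits in $Z$ as a $G_{\delta\sigma}$-subset, so $Y$ is an $F_{\sigma\delta}$-subset of the completely metrizable space $Z$ and hence an absolute $F_{\sigma\delta}$-set; also $\mathrm{Ind}Y=0$. The dense topologically complete subspace $D\subseteq X$ is a dense $G_\delta$ in $Z$, so $Z\setminus D$, and a fortiori $Y$, is of first category. Moreover $Y$ is dense in $Z$: if some nonempty clopen $W$ missed $Y$, then $W\subseteq X$ would be a nonempty clopen subset of $X$ lying in $\mathscr{F}_{\sigma\delta}$ and, being a weight-homogeneous Baire space of weight $k$, not in $\sigma LW({<}k)$, contradicting the hypothesis that $X$ is nowhere $\mathscr{F}_{\sigma\delta}\setminus\sigma LW({<}k)$. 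Density of $Y$ in the weight-homogeneous space $Z$ yields $w(Y)=k$ and weight-homogeneity of $Y$.

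The remaining and decisive condition is that $Y$ be nowhere $\sigma LW({<}k)+\mathscr{G}_{\delta\sigma}$, and I argue by contraposition. Suppose $Y\cap W=L\cup G$ for some clopen $W$ with $Y\cap W\neq\emptyset$, where $L\in\sigma LW({<}k)$ and $G\in\mathscr{G}_{\delta\sigma}$. Since $G\subseteq Y$ is an absolute $G_{\delta\sigma}$-set, $W\setminus G$ is an absolute $F_{\sigma\delta}$-set and $X\cap W=(W\setminus G)\setminus(L\setminus G)$, where the removed set $L\setminus G\subseteq L$ is $\sigma LW({<}k)$. As $X\cap W$ is comeager in the Baire space $W$ and weight-homogeneous of weight $k$, it is not in $\sigma LW({<}k)$; hence neither is the larger set $W\setminus G$. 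The aim is to deduce that $X\cap W$, or some nonempty relatively open piece of it, is an absolute $F_{\sigma\delta}$-set, which together with its failing to be $\sigma LW({<}k)$ contradicts the hypothesis on $X$ and finishes the proof.

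The main obstacle is exactly the removal of the $\sigma LW({<}k)$-defect $L\setminus G$. When $k=\omega$ this defect is countable, hence an $F_\sigma$- and so a $G_{\delta\sigma}$-subset; it can be absorbed into $G$, so $Y\cap W$ is ambiguous and $X\cap W=W\setminus(Y\cap W)$ is automatically absolute $F_{\sigma\delta}$, which recovers van Engelen's separable argument. For $k>\omega$ a set in $\sigma LW({<}k)$ need not be $G_{\delta\sigma}$ (for instance a copy of $Q(\kappa)^\omega$ with $\omega\leq\kappa<k$), so this absorption fails and deleting $L\setminus G$ may destroy the $F_{\sigma\delta}$ property; this is where the essentially non-separable difficulty lies. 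I expect to control the defect using the good closure behaviour of the class $\sigma LW({<}k)+\mathscr{G}_{\delta\sigma}$ recorded in Lemma~\ref{sg} (so that the union of all open subsets of $Y$ lying in this class is again in it, localizing the problem), together with the surgery developed for Theorem~\ref{hf}: applied to the absolute $F_{\sigma\delta}$-set $W\setminus G$ and its co-Souslin subset $X\cap W$, these techniques should produce a \ndc copy of $B^*(k)$ on which the $\sigma LW({<}k)$-part of $Y$ is concentrated and can be separated off, yielding the required absolute $F_{\sigma\delta}$ subset of $X$ that fails to be $\sigma LW({<}k)$. Granting this, Theorem~\ref{t-qR} gives $Y\approx Q(k)^\omega$, and the conclusion follows from Corollary~\ref{c-a2} as described.
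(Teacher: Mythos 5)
Your overall route is the paper's route: embed $X$ densely in (a copy of) $B^*(k)$, show that the complement $Y$ is of first category and nowhere $\sigma LW({<}k)+\mathscr{G}_{\delta\sigma}$, conclude $Y\approx Q(k)^\omega$ from Theorem~\ref{t-qR}, and finish with Corollary~\ref{c-a2}. The preparatory steps (realizing $\overline{X}$ as $B^*(k)$, first category of $Y$ from the dense completely metrizable subspace, density and weight-homogeneity of $Y$) are fine. But at the decisive step you stop proving and start hoping: ``The aim is to deduce\dots'', ``I expect to control the defect\dots'', ``these techniques should produce\dots''. As written, the verification that $Y$ is nowhere $\sigma LW({<}k)+\mathscr{G}_{\delta\sigma}$ is not carried out, so the proof is incomplete at exactly the point where the theorem's content lies.

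The gap is caused by a misreading of the hypothesis. In this paper, by analogy with the convention $X\in\mathscr{P}_1+\mathscr{P}_2$ meaning $X=A\cup B$ with $A\in\mathscr{P}_1$, $B\in\mathscr{P}_2$, the class $\mathscr{F}_{\sigma\delta}\setminus\sigma LW({<}k)$ consists of the spaces expressible as $F\setminus L$ with $F$ an absolute $F_{\sigma\delta}$-set and $L\in\sigma LW({<}k)$; this is how the author uses it here and again in Theorems~\ref{t-atk} and~\ref{t-1tk}. You instead read it as ``absolute $F_{\sigma\delta}$ and not $\sigma LW({<}k)$'', which for $k>\omega$ is a genuinely smaller class, and this is why you believed the ``defect'' $L\setminus G$ had to be surgically removed via the machinery of Theorem~\ref{hf}. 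Under the intended reading no surgery is needed: your own identity $X\cap W=(W\setminus G)\setminus(L\setminus G)$, with $W\setminus G$ an $F_{\sigma\delta}$-subset of the completely metrizable space $B^*(k)$ (hence an absolute $F_{\sigma\delta}$-set) and $L\setminus G\in\sigma LW({<}k)$, already exhibits the nonempty open subset $X\cap W$ of $X$ as a member of $\mathscr{F}_{\sigma\delta}\setminus\sigma LW({<}k)$, contradicting the hypothesis outright. That one-line observation is the paper's entire argument for this step; once you adopt the correct reading of the notation, your proof closes up and coincides with the paper's.
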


\begin{proof}
Embed $X$ densely in $B^*(k)$. We claim that $B^*(k) \setminus X$
is nowhere $\sigma LW({<}k) + \mathscr{G}_{\delta \sigma}$. Assume
the converse. Then there exists a nonempty clopen subset $U
\subset B^*(k)$ such that $U \setminus X = G \cup L$, where $G$ is
a $G_{\delta \sigma}$-subset of $B^*(k)$ and $L \in  \sigma
LW({<}k)$. Without loss of generality, $G \cap L = \emptyset$.
Then $$U \cap X = (U \setminus G) \setminus L \in
\mathscr{F}_{\sigma \delta } \setminus \sigma LW({<}k),$$ a
contradiction. Next, $B^*(k) \setminus X$ is of first category.
From Theorem~\ref{t-qR} it follows that $B^*(k) \setminus X
\approx Q(k)^\omega $. According to Definition~\ref{d-1},
$A^{II}_k = B^*(k) \setminus A$ with $A \approx Q(k)^\omega$. By
Corollary~\ref{c-a2}, $X \approx A^{II}_k$.
\end{proof}

\begin{cor}\label{c-ha12t}
$A^I_k$, $A^{II}_k$, and $Q(\tau) \times A^I_k \approx Q(\tau)
\times A^{II}_k$  are $h$-homogeneous spaces if $\omega \leq k
\leq \tau$.
\end{cor}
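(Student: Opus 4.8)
The plan is to handle the three spaces by two different mechanisms, the division being forced by the fact that $A^{II}_k$ is \emph{not} of first category (it carries a dense topologically complete subspace, by Definition~\ref{d-1}), whereas $A^I_k$, $Q(\tau)\times A^{II}_k$ and $Q(\tau)\times A^I_k$ are. Consequently Theorem~\ref{t-msu} is unavailable for $A^{II}_k$ itself, and I would characterize $A^{II}_k$ through Theorem~\ref{t-aR} instead.

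First I would show $A^{II}_k$ is $h$-homogeneous by checking that every nonempty clopen $C\subseteq A^{II}_k$ satisfies the hypotheses of Theorem~\ref{t-aR}, so that $C\approx A^{II}_k$. Each property is inherited: $C$ is an absolute $G_{\delta\sigma}$-set, being a clopen (in particular closed) subset of one; $\mathrm{Ind}\,C=0$; since $C$ is open in $A^{II}_k$ and the ``nowhere'' property passes to open subspaces, $C$ is nowhere $\mathscr{F}_{\sigma\delta}\setminus\sigma LW({<}k)$; if $D$ is the dense topologically complete subspace of $A^{II}_k$, then $C\cap D$ is clopen in $D$, hence topologically complete, and dense in $C$; and $w(C)=k$ by weight-homogeneity of $A^{II}_k$. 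Theorem~\ref{t-aR} then gives $C\approx A^{II}_k$, so $A^{II}_k$ is $h$-homogeneous.

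Second I would isolate the principle that $Q(\tau)\times Y$ is $h$-homogeneous whenever $Y$ is $h$-homogeneous with $\omega\le w(Y)\le\tau$, by applying Theorem~\ref{t-msu} to clopen subsets. For $X=Q(\tau)\times Y$ one verifies: $X$ is weight-homogeneous of weight $\tau$ (a basic box $U\times V$ has weight $\max(\tau,w(Y))=\tau$); $X$ is of first category because $Q(\tau)$ is; $X\in\sigma LF(Y)$, via a decomposition $Q(\tau)=\bigcup_n S_n$ into discrete sets, so that $X=\bigcup_n(S_n\times Y)$ with each summand closed in $X$ and in $LF(Y)$; and every nonempty clopen $W\subseteq X$ contains a \ndc copy of $Y$, since a basic box $U\times V\subseteq W$ yields $\{q\}\times V$, which is closed, nowhere dense (as $\{q\}$ is nowhere dense in the dense-in-itself space $Q(\tau)$), and homeomorphic to $Y$ because $V\approx Y$ by $h$-homogeneity of $Y$. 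All four properties are inherited by clopen subsets of $X$, so Theorem~\ref{t-msu} gives $W\approx Q(\tau)\times Y=X$ for every nonempty clopen $W$; thus $X$ is $h$-homogeneous. Applying this verification to $X=Q(\tau)\times A^{II}_k$ (for any $\tau\ge k$, with $Y=A^{II}_k$) shows $Q(\tau)\times A^{II}_k$ is $h$-homogeneous. The same argument, now run on $X=A^I_k=Q\times A^{II}_k$, which is weight-homogeneous of weight $k$ and of first category by Definition~\ref{d-1}, lies in $\sigma LF(A^{II}_k)$ because $Q$ is countable, and has every nonempty clopen subset containing a \ndc copy of $A^{II}_k$, shows via Theorem~\ref{t-msu} with $\tau=k$ that $A^I_k$ and each of its nonempty clopen subsets are homeomorphic to $Q(k)\times A^{II}_k$; hence $A^I_k$ is $h$-homogeneous.

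Third, the homeomorphism $Q(\tau)\times A^I_k\approx Q(\tau)\times A^{II}_k$ follows from $Q(\tau)\times Q\approx Q(\tau)$: the space $Q(\tau)\times Q=\bigcup_{q\in Q}\bigl(Q(\tau)\times\{q\}\bigr)$ is a countable union of copies of the $\sigma$-discrete space $Q(\tau)$, hence $\sigma$-discrete, metric, and weight-homogeneous of weight $\tau$, so Theorem~\ref{tq} yields $Q(\tau)\times Q\approx Q(\tau)$. Therefore
$$Q(\tau)\times A^I_k=Q(\tau)\times Q\times A^{II}_k\approx Q(\tau)\times A^{II}_k,$$
and since the right-hand side is $h$-homogeneous by the second paragraph, so is $Q(\tau)\times A^I_k$. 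This establishes all three assertions of the corollary.

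I expect the main obstacle to be the careful verification of the two structural hypotheses of Theorem~\ref{t-msu} — membership in $\sigma LF(A^{II}_k)$ and the existence of a \ndc copy of $A^{II}_k$ inside every nonempty clopen set — together with confirming that $Q(\tau)\times A^{II}_k$ is weight-homogeneous of weight $\tau$ in the case $\tau>k$, which is not among the spaces listed in Definition~\ref{d-1}. Both rely on the $\sigma$-discreteness and dense-in-itself character of $Q(\tau)$ and on the $h$-homogeneity of $A^{II}_k$ proved in the first step.
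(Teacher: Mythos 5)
Your argument is correct, and its skeleton (establish $h$-homogeneity of $A^{II}_k$ via Theorem~\ref{t-aR}, then pass to the products, then use $Q(\tau)\times Q\approx Q(\tau)$ to identify $Q(\tau)\times A^I_k$ with $Q(\tau)\times A^{II}_k$) matches the paper's. The one genuine divergence is in how the products are handled: the paper simply invokes an external result (\cite[Lemma 2.3]{msu}) stating that a product of two $h$-homogeneous spaces is $h$-homogeneous, whereas you rederive the $h$-homogeneity of $Q(\tau)\times Y$ from Theorem~\ref{t-msu} by checking weight-homogeneity, first category, membership in $\sigma LF(Y)$, and the presence of a \ndc copy of $Y$ in every clopen set. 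Your route has the advantage of being self-contained relative to the theorems actually quoted in the paper, and it yields slightly more (every nonempty clopen subset of $Q(\tau)\times Y$ is identified as $Q(\tau)\times Y$, not merely shown homeomorphic to the whole space by an abstract product lemma); its cost is that it leans on the first-category character of $Q(\tau)$, so unlike the cited product lemma it would not apply to products of arbitrary $h$-homogeneous factors. Two small points to tighten: in the $\sigma LF(Y)$ verification you should take the decomposition $Q(\tau)=\bigcup_n E_n$ with $E_n=\{x: x_m=0 \text{ for } m\ge n\}$ so that the discrete pieces are also \emph{closed}; and your first step tacitly assumes that $A^{II}_k$ itself is nowhere $\mathscr{F}_{\sigma\delta}\setminus\sigma LW({<}k)$ before passing this to clopen subsets --- this does hold, by the complementation argument against Theorem~\ref{t-qR} (if $U\cap A^{II}_k=F\setminus L$ then $U\cap A\in\sigma LW({<}k)+\mathscr{G}_{\delta\sigma}$), but it deserves a sentence. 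Neither point is a gap in substance.
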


\begin{proof}
$h$-Homogeneity of $A^{II}_k$ follows from Theorem~\ref{t-aR}. The
spaces $A^I_k = Q \times A^{II}_k$  is $h$-homogeneous as the
product of two $h$-homogeneous spaces (see \cite[Lemma 2.3]{msu}).
Similarly, $Q(\tau) \times A^{II}_k$ is $h$-homogeneous. Since
$Q(\tau) \approx Q(\tau)  \times Q$, we have $Q(\tau) \times A^I_k
\approx Q(\tau) \times A^{II}_k$.
\end{proof}

\begin{thm}\label{t-dha2}
The Baire space $B^*(k)$ is densely homogeneous with respect to
the space $A^{II}_k$.
\end{thm}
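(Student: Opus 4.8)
The plan is to reduce the statement to the dense homogeneity of $B^*(k)$ with respect to $Q(k)^\omega$, which is already available as Corollary~\ref{c-dhm}. By definition, dense homogeneity of $B^*(k)$ with respect to $A^{II}_k$ means that for any two dense copies $X_1$ and $X_2$ of $A^{II}_k$ in $B^*(k)$ there is a homeomorphism $f\colon B^*(k)\to B^*(k)$ with $f(X_1)=X_2$. The idea is to pass to complements: if I can show that $Y_i=B^*(k)\setminus X_i$ is a \emph{dense} copy of $Q(k)^\omega$ for $i\in\{1,2\}$, then Corollary~\ref{c-dhm} supplies a homeomorphism $f$ of $B^*(k)$ with $f(Y_1)=Y_2$, and hence $f(X_1)=B^*(k)\setminus f(Y_1)=B^*(k)\setminus Y_2=X_2$, as required.

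So the heart of the matter is to prove that the complement of a dense copy of $A^{II}_k$ in $B^*(k)$ is a dense copy of $Q(k)^\omega$. Since $X_i\approx A^{II}_k$, it inherits every topological property of $A^{II}_k$ recorded in Definition~\ref{d-1} and Theorem~\ref{t-aR}: it is an absolute $G_{\delta\sigma}$-set with $\mathrm{Ind}=0$, it is weight-homogeneous of weight $k$, it contains a dense topologically complete subspace, and it is nowhere $\mathscr{F}_{\sigma\delta}\setminus\sigma LW({<}k)$. Thus $X_i$ satisfies the hypotheses of Theorem~\ref{t-aR}, and the argument given in the proof of that theorem applies with no change to the dense embedding $X_i\subset B^*(k)$: it shows that $Y_i=B^*(k)\setminus X_i$ is an absolute $F_{\sigma\delta}$-set of first category with $\mathrm{Ind}=0$ which is nowhere $\sigma LW({<}k)+\mathscr{G}_{\delta\sigma}$, whereupon Theorem~\ref{t-qR} yields $Y_i\approx Q(k)^\omega$.

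It remains to check that $Y_i$ is dense, i.e. that $X_i$ has empty interior. If some nonempty open $U\subseteq B^*(k)$ were contained in $X_i$, then, shrinking $U$ to a clopen set, $U$ would be a clopen subset of the $h$-homogeneous space $B^*(k)$, hence $U\approx B^*(k)$. But $B^*(k)$ is an absolute $F_{\sigma\delta}$-set that is not $\sigma LW({<}k)$, so $U$ would be a nonempty open subset of $X_i$ lying in $\mathscr{F}_{\sigma\delta}\setminus\sigma LW({<}k)$, contradicting that $X_i$ is nowhere $\mathscr{F}_{\sigma\delta}\setminus\sigma LW({<}k)$. Hence each $Y_i$ is dense; the density also feeds the first-category claim for $Y_i$ used above, since a set contained in a meager $F_\sigma$ subset of $B^*(k)$ and dense in $B^*(k)$ is of first category in itself. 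With $Y_1$ and $Y_2$ now recognized as dense copies of $Q(k)^\omega$, Corollary~\ref{c-dhm} finishes the proof.

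The main obstacle is the complementation step, i.e. confirming that all the hypotheses of Theorem~\ref{t-qR} really hold for $Y_i$; in particular, the property \emph{nowhere $\sigma LW({<}k)+\mathscr{G}_{\delta\sigma}$} is the delicate one and is obtained, exactly as in Theorem~\ref{t-aR}, by a Borel-class computation: a putative clopen $U$ with $U\cap Y_i=L\cup G$, where $L\in\sigma LW({<}k)$ and $G\in\mathscr{G}_{\delta\sigma}$, forces $U\cap X_i=(U\setminus G)\setminus L$ to be an absolute $F_{\sigma\delta}$-set that is not $\sigma LW({<}k)$, contradicting the corresponding \emph{nowhere} property of $X_i$. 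Everything else (hereditarity of $\mathrm{Ind}=0$, the weight computation $w(Y_i)=k$ for a dense subset, and the absolute $F_{\sigma\delta}$ character coming from $X_i\in\mathscr{G}_{\delta\sigma}(B^*(k))$) is routine.
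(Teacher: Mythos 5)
Your proof is correct and follows essentially the same route as the paper: the paper's own argument also passes to the complements $B^*(k)\setminus X_i$, identifies them as dense copies of $Q(k)^\omega$ via the computation from Theorem~\ref{t-aR} together with Theorem~\ref{t-qR}, and then invokes Corollary~\ref{c-dhm}. The extra details you supply (density of the complements and the verification of the hypotheses of Theorem~\ref{t-qR}) are exactly what the paper leaves implicit in the phrase ``as in the proof of Theorem~\ref{t-aR}.''
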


\begin{proof}
Let $X_1$ and $X_2$ be two dense copies of $A^{II}_k$ in $B^*(k)$.
As in the proof of Theorem~\ref{t-aR} we obtain that $B^*(k)
\setminus X_1 \approx B^*(k) \setminus X_2 \approx Q(k)^\omega$.
By Corollary~\ref{c-dhm} there exists a homeomorphism $f: B^*(k)
\rightarrow B^*(k)$ such that $f(B^*(k) \setminus X_1) = B^*(k)
\setminus X_2$. Then $f(X_1) = X_2$.
\end{proof}

\begin{thm}\label{t-ema}
Let $\omega < k \leq \tau$. The Baire space $B(\tau)$ is densely
homogeneous with respect to the space $Q(\tau) \times A^I_k$ and
with respect to the space $Q(\tau) \times M^I_k$.
\end{thm}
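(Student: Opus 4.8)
The plan is to deduce both assertions from Theorem~\ref{t-emb}. That theorem takes an $h$-homogeneous space $Y$ with respect to which $B(k)$ is densely homogeneous, and produces the dense homogeneity of $B(\tau)$ with respect to $Q(\tau)\times Y$. Since $\omega<k$, we have $B^*(k)=B(k)$, so all of the earlier dense-homogeneity results stated for $B^*(k)$ apply verbatim to $B(k)$. Thus the whole argument reduces to choosing, for each of the two target products, the correct base space $Y$ and checking that it meets the hypotheses of Theorem~\ref{t-emb}.

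For $Q(\tau)\times M^I_k$ I would take $Y=M^I_k=Q(k)^\omega$. First I would note that $Q(k)^\omega$ is $h$-homogeneous: by the proof of Theorem~\ref{t-qR} it lies in $\mathscr{X}_k$, every nonempty clopen subset of it again satisfies the hypotheses of Theorem~\ref{t-qR}, and hence is homeomorphic to $Q(k)^\omega$. It has weight $k$, and by Corollary~\ref{c-dhm} the space $B^*(k)=B(k)$ is densely homogeneous with respect to $Q(k)^\omega$. Applying Theorem~\ref{t-emb} with $Y=Q(k)^\omega$ then yields that $B(\tau)$ is densely homogeneous with respect to $Q(\tau)\times Q(k)^\omega=Q(\tau)\times M^I_k$.

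For $Q(\tau)\times A^I_k$ I would first rewrite the target. By Definition~\ref{d-1} we have $A^I_k=Q\times A^{II}_k$, and using $Q(\tau)\approx Q(\tau)\times Q$ it follows that
$$Q(\tau)\times A^I_k\approx Q(\tau)\times Q\times A^{II}_k\approx Q(\tau)\times A^{II}_k.$$
Because dense homogeneity of $B(\tau)$ with respect to a space depends only on its homeomorphism type, it suffices to establish the dense homogeneity of $B(\tau)$ with respect to $Q(\tau)\times A^{II}_k$. Now I would take $Y=A^{II}_k$, which is $h$-homogeneous by Corollary~\ref{c-ha12t}, has weight $k$, and with respect to which $B^*(k)=B(k)$ is densely homogeneous by Theorem~\ref{t-dha2}. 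A second application of Theorem~\ref{t-emb} then finishes this case.

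I do not expect a genuine obstacle here, since the analytic content is already packaged in Theorem~\ref{t-emb} together with the dense-homogeneity statements Corollary~\ref{c-dhm} and Theorem~\ref{t-dha2}. The only points needing care are bookkeeping: confirming the $h$-homogeneity of $Q(k)^\omega$ so that the hypothesis of Theorem~\ref{t-emb} is met, carrying out the rearrangement $Q(\tau)\times A^I_k\approx Q(\tau)\times A^{II}_k$ through $Q(\tau)\approx Q(\tau)\times Q$, and observing that dense homogeneity is invariant under replacing the target space by a homeomorphic copy.
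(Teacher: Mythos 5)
Your proposal is correct and follows exactly the paper's own route: Theorem~\ref{t-emb} combined with Corollary~\ref{c-dhm} for $Q(\tau)\times M^I_k$, and the reduction $Q(\tau)\times A^I_k\approx Q(\tau)\times A^{II}_k$ (which is precisely Corollary~\ref{c-ha12t}) followed by Theorem~\ref{t-emb} with Theorem~\ref{t-dha2} for the other case. The extra bookkeeping you supply (verifying $h$-homogeneity of the base spaces) is implicit in the paper but entirely consistent with it.
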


\begin{proof}
Using Theorem~\ref{t-emb} and Corollary~\ref{c-dhm}, we obtain
that $B(\tau)$ is densely homogeneous with respect to the space
$Q(\tau) \times M^I_k$. By Corollary~\ref{c-ha12t} we have
$Q(\tau) \times A^I_k \approx Q(\tau) \times A^{II}_k$. From
Theorems~\ref{t-emb} and~\ref{t-dha2} it follows that $B(\tau)$ is
densely homogeneous with respect to $Q(\tau) \times A^I_k$.
\end{proof}

To give an internal description of the spaces $A^I_k$, $Q(\tau)
\times A^I_k$, $M^I_k$, and $Q(\tau) \times M^I_k$ we improve
Theorem~\ref{hf}.

\begin{thm}\label{t-vqx}
Let $X$ be an $F_{\sigma \delta}$-subset of the Baire space
$B^*(k)$ and $X \notin \sigma LW({<}k) + \mathscr{G}_{\delta
\sigma}$. Then there exists a closed subset $M \subset B^*(k)$
such that $M \cap X$ is a \ndc subset of $X$, $M \cap X \approx
M^I_k$, $M \setminus X$ is a \ndc subset of $B^*(k) \setminus X$,
$M \setminus X \approx A_k^{II}$,  and $M \approx B^*(k)$.
\end{thm}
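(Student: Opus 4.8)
The plan is to derive the theorem from Theorem~\ref{hf}, which already produces a closed copy $M$ of $B^*(k)$, and then to identify the two pieces $M\cap X$ and $M\setminus X$ using the characterizations available earlier. First I would apply Theorem~\ref{hf} with $A=X$ and $F=\overline X$ (a closed, hence $F_\sigma$, set containing $X$); since $X$ is an $F_{\sigma\delta}$-set it is Borel and in particular co-Souslin in $B^*(k)$, and by hypothesis $X\notin\sigma LW({<}k)+\mathscr{G}_{\delta\sigma}$, so the hypotheses of Theorem~\ref{hf} hold. Write $Y=M\cap X$. Theorem~\ref{hf} then yields a closed $M\subseteq\overline X$ with $M\approx B^*(k)$ such that $Y$ is a \ndc subset of $X$, $Y$ is of first category, and $Y$ is nowhere $\sigma LW({<}k)+\mathscr{G}_{\delta\sigma}$; moreover its construction gives $\overline Y=M$ (Claim~\ref{cl3}), so $Y$ is \emph{dense} in $M$.

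Next I would identify $M\cap X=Y\approx M^I_k$. Because $Y$ is nowhere $\sigma LW({<}k)+\mathscr{G}_{\delta\sigma}$ it is in particular nowhere locally of weight ${<}k$, so $w(Y)\ge k$; as $Y\subseteq B^*(k)$ we get $w(Y)=k$. Since $X$ is $F_{\sigma\delta}$ in $B^*(k)$ and $M$ is closed, $Y$ is $F_{\sigma\delta}$ in the complete space $M\approx B^*(k)$, hence an absolute $F_{\sigma\delta}$-set, and $\mathrm{Ind}\,Y=0$. Thus $Y$ satisfies all the hypotheses of Theorem~\ref{t-qR} (absolute $F_{\sigma\delta}$, of first category, $\mathrm{Ind}=0$, nowhere $\sigma LW({<}k)+\mathscr{G}_{\delta\sigma}$, weight $k$), and Theorem~\ref{t-qR} gives $Y\approx Q(k)^\omega=M^I_k$.

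For the complementary piece I would use dense homogeneity. Fix a homeomorphism identifying $M$ with $B^*(k)$. Under it $Y$ becomes a dense copy of $Q(k)^\omega$ (dense because $\overline Y=M$), so $M\setminus X=M\setminus Y$ is a dense subset of $M$ whose complement in $M$ is $\approx Q(k)^\omega$; likewise $A^{II}_k=B^*(k)\setminus A$ is a dense subset of $B^*(k)$ whose complement is $A\approx Q(k)^\omega$. By Corollary~\ref{c-a2} (a consequence of the dense homogeneity of $B^*(k)$ with respect to $Q(k)^\omega$, Corollary~\ref{c-dhm}) we conclude $M\setminus X\approx A^{II}_k$. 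The same identification records $M\approx B^*(k)$, completing the homeomorphism-type part.

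It remains to verify the two nowhere-density statements; the first is immediate and the second is the main obstacle. That $M\cap X=Y$ is a \ndc subset of $X$ is precisely conclusion~1) of Theorem~\ref{hf}, and $M\setminus X=M\cap(B^*(k)\setminus X)$ is clearly closed in $B^*(k)\setminus X$. For its nowhere density I would first note that $M$ is nowhere dense in $B^*(k)$: if a nonempty open $W\subseteq M=\overline Y$, then $W$ meets $Y$, so $W\cap X\subseteq M\cap X=Y$ is a nonempty open subset of $X$ lying in $Y$, contradicting that $Y$ is \ndc in $X$. The delicate point is that nowhere density in $B^*(k)$ does not transfer automatically to the subspace $B^*(k)\setminus X$: one must show $B^*(k)\setminus(X\cup M)$ is dense in $B^*(k)\setminus X$, i.e. that every point of $M\setminus X$ is approached by points lying outside both $X$ and $M$. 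I expect this to require reopening the construction of Theorem~\ref{hf}: the points of $M\setminus X$ arise as intersections $\bigcap_n\overline{\varphi(T(\chi{\upharpoonright}n,\xi{\upharpoonright}n))}$ with the sets $\varphi(T(t,\alpha))\subseteq B^*(k)\setminus X$ sitting inside the clopen pieces ${\vartriangle}U$ of condition (a8), which accumulate onto $\overline{Z(t,\alpha)}\subseteq M$; branches of this tree that leave the skeleton carrying the $Z(t,\alpha)$ supply, in every neighborhood of such a point, points of $B^*(k)\setminus X$ missing $M$. This is the complement-side analogue of the argument in Theorem~\ref{hf} that produced, via the clopen sets $W(t,\alpha)$, points of $X$ missing $Y$ near every point of $Y$, and carrying it out carefully is the crux of the proof.
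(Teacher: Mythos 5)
Your overall strategy matches the paper's up to the point you yourself flag as the crux, and that is exactly where the gap is. The paper does \emph{not} take $M$ to be the set produced by Theorem~\ref{hf}. It first obtains $M_1$ with $Y_1=M_1\cap X$ nowhere $\sigma LW({<}k)+\mathscr{G}_{\delta\sigma}$ and of first category, identifies $Y_1\approx Q(k)^\omega$ by Theorem~\ref{t-qR} exactly as you do, but then uses $Q(k)^\omega\approx Q(k)\times Q(k)^\omega$ to choose a \ndc subset $Y\subset Y_1$ with $Y\approx M^I_k$, and sets $M=\overline{Y}$. This one extra shrinking step is what delivers the nowhere density of $M\setminus X$ in $B^*(k)\setminus X$ cheaply: $M$ is then a \ndc subset of $M_1$ (because $Y$ is \ndc in $Y_1$ and $Y_1$ is dense in $M_1$), while $M_1\setminus X$ is dense in $M_1$ (because $Y_1$ is of first category in the complete space $M_1$); hence every point of $M\setminus X$ is approximated by points of $M_1\setminus X$ lying outside $M$. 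No reopening of the construction of Theorem~\ref{hf} is needed.

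By contrast, with your choice of $M$ equal to the output of Theorem~\ref{hf}, the assertion that $M\setminus X$ is nowhere dense in $B^*(k)\setminus X$ is left unproved: you would have to show that every point of $M\setminus X$ is a limit of points of $(B^*(k)\setminus X)\setminus M$, and your sketch of extracting such witnesses from the branches $\varphi(T(t,\alpha))$ of the tree in Theorem~\ref{hf} is not carried out (and is delicate, since one must also handle the branch points $\varphi(c)\in M\setminus Z$ and check that the witnesses escape all the sets $P_m$). The remaining parts of your argument are sound: the identification $M\cap X\approx M^I_k$ via Theorem~\ref{t-qR} is as in the paper, and your identification $M\setminus X\approx A^{II}_k$ via Corollary~\ref{c-a2} is a legitimate and essentially equivalent alternative to the paper's appeal to Theorem~\ref{t-aR}. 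But as written the proposal does not establish the theorem; the missing idea is the passage from the set given by Theorem~\ref{hf} to the closure of a nowhere dense closed copy of $Q(k)^\omega$ inside its trace on $X$.
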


\begin{proof}
By Theorem~\ref{hf} with $F = B^*(k)$, there exists a closed
subset $M_1$ of $B^*(k)$ such that $M_1 \approx B^*(k)$ and the
intersection $Y_1 = M_1 \cap X$ is nowhere $\sigma LW({<}k) +
\mathscr{G}_{\delta \sigma}$ and of first category. Clearly, $Y_1$
is an $F_{\sigma \delta}$-subset of $B^*(k)$. From
Theorem~\ref{t-qR} it follows that $Y_1 \approx Q(k)^\omega$.
Since $Y_1 \approx Q(k) \times Q(k)^\omega$, we can find a \ndc
subset $Y \subset Y_1$ with $Y \approx M^I_k$. Put $M =
\overline{Y}$. Clearly, $M \cap X = Y \approx M^I_k$.  Since $M
\approx B^*(k)$, $M \setminus Y$ is a dense $G_{\delta
\sigma}$-subset of $M$. From Theorem~\ref{t-qR} it follows that $M
\setminus Y$ is nowhere $\mathscr{F}_{\sigma \delta } \setminus
\sigma LW({<}k)$. Since $Y$ is of first category, $M \setminus Y$
contains a dense topologically complete subspace. By virtue of
Theorem~\ref{t-aR}, $M \setminus Y \approx A_k^{II}$. By
construction, $M \setminus Y$ is a closed subset of $B^*(k)
\setminus X$. Since $M$ is a \ndc subset of $M_1$ and $M_1
\setminus X$ is dense in $M_1$, we conclude that $M \setminus X =
M \setminus Y$ is a nowhere dense subset of $B^*(k) \setminus X$.
\end{proof}

From the last theorem we immediately obtain

\begin{cor}\label{c-cm2}
Let $X$ be an $F_{\sigma \delta}$-subset of the Baire space
$B^*(k)$ and $X \notin \sigma LW({<}k) + \mathscr{G}_{\delta
\sigma}$. Then $X$ contains a \ndc copy of $Q(k)^\omega$.
\end{cor}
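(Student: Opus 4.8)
The plan is to read the conclusion off directly from Theorem~\ref{t-vqx}, whose hypotheses coincide verbatim with those of the corollary. First I would apply Theorem~\ref{t-vqx} to the given $F_{\sigma \delta}$-subset $X$ of $B^*(k)$ satisfying $X \notin \sigma LW({<}k) + \mathscr{G}_{\delta \sigma}$. This yields a closed subset $M \subseteq B^*(k)$ together with the full list of properties recorded there. Of these, only two are needed: that the intersection $Z = M \cap X$ is a \ndc subset of $X$, and that $Z \approx M^I_k$.

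Next I would recall from Definition~\ref{d-1} that $M^I_k$ is merely alternative notation for the space $Q(k)^\omega$. Consequently $Z = M \cap X$ is a nonempty nowhere dense closed subset of $X$ that is homeomorphic to $Q(k)^\omega$; that is, $Z$ is exactly a \ndc copy of $Q(k)^\omega$ lying inside $X$. Since the hypotheses of Theorem~\ref{t-vqx} guarantee $Z \approx M^I_k$ is nonempty, no further verification is required, and this completes the argument.

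There is no genuine obstacle here. All of the substantive work — the construction of $M$ via the refinement of Theorem~\ref{hf} with $F = B^*(k)$, the identification $M \cap X \approx Q(k)^\omega$ through Theorem~\ref{t-qR}, and the passage to a \ndc subcopy — is already carried out inside the proof of Theorem~\ref{t-vqx}. The corollary simply discards the information concerning $M \setminus X \approx A_k^{II}$ and $M \approx B^*(k)$, retaining only the embedded copy $M \cap X$ as the desired \ndc copy of $Q(k)^\omega$ in $X$.
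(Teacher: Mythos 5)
Your proposal is correct and is exactly the paper's argument: the corollary is stated there as an immediate consequence of Theorem~\ref{t-vqx}, reading off that $M \cap X$ is a \ndc subset of $X$ homeomorphic to $M^I_k = Q(k)^\omega$ and discarding the remaining conclusions. Nothing further is needed.
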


\begin{cor}\label{c-f2}
Let $X$ and $Y$ be $F_{\sigma \delta}$-subsets of the Baire space
$B^*(k)$ and $X \notin \sigma LW({<}k) + \mathscr{G}_{\delta
\sigma}$. Then there exists a \ndc subset $F$ of $X$ such that $F
\approx Y$.
\end{cor}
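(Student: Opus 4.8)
The plan is to combine the two preceding corollaries: Corollary~\ref{c-cm2} produces a nowhere dense closed copy of $Q(k)^\omega$ sitting inside $X$, while Corollary~\ref{c-hko} lets me realize the arbitrary $F_{\sigma\delta}$-set $Y$ as a closed subset of $Q(k)^\omega$. Transferring the latter embedding into the former copy yields the desired $F$, and only the routine bookkeeping about closedness and nowhere-density remains.

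First I would invoke Corollary~\ref{c-cm2}, whose hypotheses are exactly those assumed on $X$ (namely that $X$ is an $F_{\sigma\delta}$-subset of $B^*(k)$ not belonging to $\sigma LW({<}k) + \mathscr{G}_{\delta\sigma}$). This gives a subset $Z \subset X$ that is nowhere dense and closed in $X$ with $Z \approx Q(k)^\omega$; fix a homeomorphism $h: Q(k)^\omega \to Z$. Next, since $Y$ is itself an $F_{\sigma\delta}$-subset of $B^*(k)$, Corollary~\ref{c-hko} applied with its ``$F$'' taken to be $Y$ provides a closed embedding $e: Y \to Q(k)^\omega$, so that $e(Y)$ is a closed subset of $Q(k)^\omega$ homeomorphic to $Y$. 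I then set $F = h(e(Y)) \subset Z$, so that $F \approx Y$ by construction.

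Finally I would verify that $F$ is a \ndc subset of $X$. Because $e(Y)$ is closed in $Q(k)^\omega$ and $h$ is a homeomorphism, $F$ is closed in $Z$; as $Z$ is closed in $X$, transitivity of closedness gives that $F$ is closed in $X$. Since $F \subseteq Z$ and $Z$ is nowhere dense in $X$, the set $\mathrm{cl}_X F$ is contained in $\mathrm{cl}_X Z = Z$, which has empty interior in $X$; hence $F$ is nowhere dense in $X$. Thus $F$ is the required nowhere dense closed subset of $X$ with $F \approx Y$.

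I do not expect a genuine obstacle in this argument: both structural ingredients are already in hand from Theorem~\ref{t-vqx} and its corollaries. The only points demanding any care are the inheritance of closedness through the tower $F \subseteq Z \subseteq X$ and the inheritance of nowhere-density from $Z$ to its subset $F$, both of which are immediate once $Z$ has been secured as a nowhere dense closed copy of $Q(k)^\omega$ inside $X$.
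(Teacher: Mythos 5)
Your argument is correct and is essentially identical to the paper's own proof: the paper likewise applies Corollary~\ref{c-cm2} to obtain a nowhere dense closed copy $M$ of $Q(k)^\omega$ inside $X$ and then Corollary~\ref{c-hko} to place a closed copy of $Y$ inside $M$. Your extra verification that closedness and nowhere-density pass through the tower $F \subseteq Z \subseteq X$ is exactly the routine step the paper leaves implicit.
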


\begin{proof}
By Corollary~\ref{c-cm2}, $X$ contains a \ndc subset $M$ with $M
\approx Q(k)^\omega$. From Corollary~\ref{c-hko} it follows that
$M$ contains a closed copy $F$ of $Y$.
\end{proof}

\begin{cor}\label{c-ca2}
Let $Y$ be a $G_{\delta \sigma}$-subset of the Baire space
$B^*(k)$ and $Y \notin \mathscr{F}_{\sigma \delta } \setminus
\sigma LW({<}k)$. Then $Y$ contains a \ndc copy of $A_k^{II}$.
\end{cor}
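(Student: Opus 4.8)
The plan is to reduce the statement to Theorem~\ref{t-vqx} by passing to the complement. Put $X = B^*(k) \setminus Y$. Since $Y$ is a $G_{\delta \sigma}$-subset of $B^*(k)$, its complement $X$ is an $F_{\sigma \delta}$-subset of $B^*(k)$, so the ambient hypothesis of Theorem~\ref{t-vqx} on $X$ is met automatically. What remains is to convert the hypothesis on $Y$ into the hypothesis $X \notin \sigma LW({<}k) + \mathscr{G}_{\delta \sigma}$ demanded there.

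First I would verify this translation by contraposition. Suppose $X \in \sigma LW({<}k) + \mathscr{G}_{\delta \sigma}$, say $X = L \cup G$ with $L \in \sigma LW({<}k)$ and $G \in \mathscr{G}_{\delta \sigma}$. By De Morgan, $Y = B^*(k) \setminus X = (B^*(k) \setminus G) \setminus L$. The complement of a $G_{\delta \sigma}$-set is an $F_{\sigma \delta}$-set, so $B^*(k) \setminus G \in \mathscr{F}_{\sigma \delta}$, whence $Y \in \mathscr{F}_{\sigma \delta} \setminus \sigma LW({<}k)$. This is precisely the excluded situation, so the assumption $Y \notin \mathscr{F}_{\sigma \delta} \setminus \sigma LW({<}k)$ forces $X \notin \sigma LW({<}k) + \mathscr{G}_{\delta \sigma}$. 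Note that, unlike in the proof of Theorem~\ref{t-aR}, no disjointness of $L$ and $G$ is needed here, since the identity $Y = (B^*(k) \setminus G) \setminus L$ follows directly from De Morgan's law.

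Next I would apply Theorem~\ref{t-vqx} to $X$. It produces a closed subset $M \subset B^*(k)$, homeomorphic to $B^*(k)$, such that $M \setminus X$ is a \ndc subset of $B^*(k) \setminus X$ and $M \setminus X \approx A_k^{II}$. Since $M \subseteq B^*(k)$, we have $M \setminus X = M \cap (B^*(k) \setminus X) = M \cap Y$. Thus $M \cap Y$ is a \ndc subset of $Y$ homeomorphic to $A_k^{II}$, which is exactly the copy sought.

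There is no genuine obstacle beyond this hypothesis translation: the argument is the $G_{\delta \sigma}$-side mirror of Corollary~\ref{c-cm2}, reading off the complementary piece $M \setminus X$ of the decomposition furnished by Theorem~\ref{t-vqx} in place of the piece $M \cap X$. The only point deserving attention is the bookkeeping in the second paragraph, which rests on the duality between $F_{\sigma \delta}$- and $G_{\delta \sigma}$-subsets of the completely metrizable space $B^*(k)$ under complementation.
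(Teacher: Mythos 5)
Your proposal is correct and follows exactly the paper's route: the paper's own proof is the one-line reduction ``apply Theorem~\ref{t-vqx} with $X = B^*(k)\setminus Y$,'' and your contrapositive translation of the hypothesis together with the identification $M\setminus X = M\cap Y$ simply spells out the details that the paper leaves implicit. Nothing is missing and no step fails.
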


\begin{proof}
The corollary follows from Theorem~\ref{t-vqx} with $X = B^*(k)
\setminus Y$.
\end{proof}

\begin{rem}\label{r-1}
In~\cite[Section~5]{Med10} it was constructed \textit{a canonical
element} $M_3(k)$ for $F_{\sigma \delta}$-subsets of the Baire
space $B(k)$. This means (see~\cite{Med10}) that $M_3(k)$ is an
$h$-homogeneous absolute $F_{\sigma \delta}$-set of first
category, $M_3(k)$ contains a closed copy of any $F_{\sigma
\delta}$-subset of $B(k)$, and $w(M_3(k)) = k$. In particulary,
$M_3(k)$ contains a closed copy of $Q(k)^\omega$.
Corollary~\ref{c-hko} shows that the space $Q(k)^\omega$ contains
a closed copy of $M_3(k)$. Theorem~\ref{t-msu} implies that
$M_3(k) \approx Q(k) \times Q(k)^\omega \approx Q(k)^\omega$.

Let $A$ be a dense subset of $B^*(k)$ such that $A \approx
M_3(k)$. By Definition~\ref{d-1} we have $A_k^{II} = B^*(k)
\setminus A$. In~\cite{Med10} the set $A_k^{II}$ was denoted by
$A_3(k)$. According to \cite[Lemma 6]{Med10}, every $G_{\delta
\sigma}$-subset of $B(k)$ is homeomorphic to a closed subset of
$B(k) \times A_k^{II}$. Using Theorem~\ref{t-aR}, we obtain that
$B(k) \times A_k^{II} \approx A_k^{II}$. Hence, every $G_{\delta
\sigma}$-subset of $B(k)$ is homeomorphic to a closed subset of
$A_k^{II}$. In particulary, $A_k^{II}$ contains a closed copy of
$A_k^I$.
\end{rem}

\begin{cor}\label{c-ea2}
Let $X$ and $Y$ be $G_{\sigma \delta}$-subsets of the Baire space
$B^*(k)$ and $X \notin \mathscr{F}_{\sigma \delta } \setminus
\sigma LW({<}k)$. Then there exists a \ndc subset $F$ of $X$ such
that $F \approx Y$.
\end{cor}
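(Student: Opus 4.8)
The plan is to transcribe the proof of Corollary~\ref{c-f2}, exchanging the canonical $F_{\sigma \delta}$-set $Q(k)^\omega$ and $F_{\sigma \delta}$-subsets for the canonical $G_{\delta \sigma}$-set $A^{II}_k$ and $G_{\delta \sigma}$-subsets. The two ingredients I would invoke are Corollary~\ref{c-ca2}, which extracts a \ndc copy of $A^{II}_k$ inside a suitable $G_{\delta \sigma}$-set, and the universal embedding property of $A^{II}_k$ recorded in Remark~\ref{r-1}.

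First, since $X$ is a $G_{\delta \sigma}$-subset of $B^*(k)$ satisfying $X \notin \mathscr{F}_{\sigma \delta} \setminus \sigma LW({<}k)$, Corollary~\ref{c-ca2} applied to $X$ (in place of $Y$ there) produces a \ndc subset $M \subseteq X$ with $M \approx A^{II}_k$. Next, by Remark~\ref{r-1}, every $G_{\delta \sigma}$-subset of $B^*(k)$ is homeomorphic to a closed subset of $A^{II}_k$; this rests on \cite[Lemma 6]{Med10} together with the identity $B(k) \times A^{II}_k \approx A^{II}_k$ furnished by Theorem~\ref{t-aR}. Applying this to $Y$, I obtain a closed copy $F$ of $Y$ inside $M \approx A^{II}_k$.

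Finally, I would observe that $F$ is closed in $M$ and $M$ is closed in $X$, so $F$ is closed in $X$; and since $F \subseteq M$ while $M$ is nowhere dense in $X$, the set $F$ is nowhere dense in $X$ as well. Hence $F$ is the required \ndc subset of $X$ with $F \approx Y$. I expect no genuine obstacle here: both the extraction of a \ndc copy of $A^{II}_k$ and the universality of $A^{II}_k$ for closed embeddings of $G_{\delta \sigma}$-sets are already in hand, so the argument is a direct dualization of Corollary~\ref{c-f2}.
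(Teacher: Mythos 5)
Your proposal is correct and follows the paper's own proof exactly: Corollary~\ref{c-ca2} supplies the \ndc copy $M \approx A^{II}_k$ inside $X$, and the universality of $A^{II}_k$ from Remark~\ref{r-1} places a closed copy of $Y$ inside $M$. Your closing observation that $F$ inherits the \ndc property from $M$ merely makes explicit what the paper leaves implicit.
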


\begin{proof}
By Corollary~\ref{c-ca2}, $X$ contains a \ndc subset $M$ with $M
\approx A^{II}_k$. According to Remark~\ref{r-1}, $M$ contains a
closed copy $F$ of $Y$.
\end{proof}

\begin{thm}\label{t-ak1}
Let $X$ be an absolute $G_{\delta \sigma}$-set of first category
such that $\mathrm{Ind}X= 0$, $X$ is nowhere $\mathscr{F}_{\sigma
\delta } \setminus \sigma LW({<}k)$, and $w(X)=k$. Then $X \approx
A^I_k$.
\end{thm}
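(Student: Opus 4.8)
The plan is to present $X$ as a product and invoke Theorem~\ref{t-msu}. Concretely, I would apply Theorem~\ref{t-msu} with the $h$-homogeneous space $Y=A^{II}_k$ (its $h$-homogeneity is Corollary~\ref{c-ha12t}) and $\tau=k$, aiming to conclude $X\approx Q(k)\times A^{II}_k$, and then identify $Q(k)\times A^{II}_k$ with $A^I_k$. For the latter identification, recall that $A^I_k$ is $h$-homogeneous and of first category with $w(A^I_k)=k$ (Corollary~\ref{c-ha12t} and Definition~\ref{d-1}); Corollary~\ref{c-xhq} then gives $A^I_k\approx Q(k)\times A^I_k$, while Corollary~\ref{c-ha12t} gives $Q(k)\times A^I_k\approx Q(k)\times A^{II}_k$. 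Hence $A^I_k\approx Q(k)\times A^{II}_k$, so it suffices to verify the hypotheses of Theorem~\ref{t-msu} for $X$. As usual, for $k=\omega$ the statement is van Engelen's separable theorem, so I would assume $k>\omega$ and $B^*(k)=B(k)$.

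Since Theorem~\ref{t-msu} requires $X$ to be weight-homogeneous of weight $k$ (and of first category, which is given), I would first fix an embedding of $X$ into $B^*(k)$ realizing it as a $G_{\delta\sigma}$-subset, working---exactly as in the proof of Theorem~\ref{t-aR}---with $X$ as a weight-homogeneous member of $\mathscr{E}_k$. The condition $X\in\sigma LF(A^{II}_k)$ is then easy: because $\mathrm{Ind}X=0$, every point of $X$ has a clopen neighbourhood $V$, and $V$ is an absolute $G_{\delta\sigma}$-set of weight $\le k$ with $\mathrm{Ind}V=0$, hence a $G_{\delta\sigma}$-subset of $B^*(k)$; by Remark~\ref{r-1} such a $V$ is homeomorphic to a closed subset of $A^{II}_k$. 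Thus $X\in LF(A^{II}_k)\subseteq\sigma LF(A^{II}_k)$.

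The one genuinely substantive hypothesis is that every nonempty clopen subset of $X$ contains a \ndc copy of $A^{II}_k$. To get this I would take a nonempty clopen $V\subseteq X$; as a nonempty open subset of $X$, the hypothesis ``$X$ is nowhere $\mathscr{F}_{\sigma\delta}\setminus\sigma LW({<}k)$'' gives $V\notin\mathscr{F}_{\sigma\delta}\setminus\sigma LW({<}k)$. Viewing $V$ as a $G_{\delta\sigma}$-subset of $B^*(k)$, Corollary~\ref{c-ca2} produces a \ndc copy of $A^{II}_k$ inside $V$, which is precisely what is required. With all conditions checked, Theorem~\ref{t-msu} yields $X\approx Q(k)\times A^{II}_k\approx A^I_k$.

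I expect the main obstacle to be the weight-homogeneity input rather than any of these individual verifications: Theorem~\ref{t-msu} cannot be applied to an arbitrary absolute $G_{\delta\sigma}$-set of weight $k$, and the condition ``nowhere $\mathscr{F}_{\sigma\delta}\setminus\sigma LW({<}k)$'' alone does not exclude low-weight clopen pieces (a Polish or rational factor of weight ${<}k$ lies in $\sigma LW({<}k)$ and so never violates it). Securing that $X$ is weight-homogeneous of weight $k$---equivalently, that $X$ densely embeds in $B^*(k)$---is therefore the step that needs the most care, and it mirrors the corresponding hidden use of weight-homogeneity in the proof of Theorem~\ref{t-aR}.
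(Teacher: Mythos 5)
Your proof is correct and follows essentially the same route as the paper's: the paper likewise feeds $X$ into Theorem~\ref{t-msu} (with $Y=A^I_k$ rather than $A^{II}_k$, an immaterial difference since $Q(k)\times A^I_k\approx Q(k)\times A^{II}_k\approx A^I_k$), using Corollary~\ref{c-ea2} in place of your Corollary~\ref{c-ca2} to produce the nowhere dense closed copies, and it is equally silent about weight-homogeneity. Your closing worry is in fact unfounded: a nonempty open $V\subseteq X$ with $w(V)<k$ can be written as $V=F\setminus(F\setminus V)$, where $F$ is the closure of $V$ in a completion of weight ${<}k$ (an absolute $G_\delta$, hence absolute $F_{\sigma\delta}$) and $F\setminus V\in\sigma LW({<}k)$, so such a $V$ \emph{would} violate the hypothesis that $X$ is nowhere $\mathscr{F}_{\sigma\delta}\setminus\sigma LW({<}k)$, and weight-homogeneity follows.
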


\begin{proof}
Since $A^I_k = Q \times A^{II}_k \approx Q \times A^I_k$, the
space $A^I_k$ contains a \ndc copy of itself.
Corollary~\ref{c-ea2} implies that each of the spaces $X$ and
$A^I_k$ contains a \ndc copy of another. By Theorem~\ref{t-msu},
$X \approx Q(k) \times A^I_k \approx A^I_k$.
\end{proof}

\begin{thm}\label{t-atk}
Let $\omega \leq k \leq \tau$. Let $X$ be an absolute $G_{\delta
\sigma}$-set of first category such that $X$ is nowhere
$\mathscr{F}_{\sigma \delta } \setminus \sigma LW({<}k)$, $X \in
\sigma LW({<}k^+)$, and $X \in \mathscr{E}_\tau$. Then $X \approx
Q(\tau) \times A_k^I$.
\end{thm}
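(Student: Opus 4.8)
The plan is to deduce the homeomorphism from Theorem~\ref{t-msu} applied with $Y = A^I_k$, which gives exactly $X \approx Q(\tau) \times A^I_k$. Three of the hypotheses of that theorem are immediate: $A^I_k$ is $h$-homogeneous by Corollary~\ref{c-ha12t}; $X$ is weight-homogeneous of weight $\tau$ with $\mathrm{Ind}X=0$ because $X \in \mathscr{E}_\tau$; and $X$ is of first category by assumption. So everything reduces to the two structural conditions: $X \in \sigma LF(A^I_k)$, and every nonempty clopen $V \subseteq X$ contains a \ndc copy of $A^I_k$.

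For the membership $X \in \sigma LF(A^I_k)$ I would use $X \in \sigma LW({<}k^+)$. Write $X = \bigcup\{X_n : n \in \omega\}$ with each $X_n$ locally of weight ${\le}k$ and, refining the decomposition if necessary, closed in $X$ (see~\cite{St}). Fix $n$ and a point of $X_n$; since $\mathrm{Ind}X_n = 0$ it has a clopen (in $X_n$) neighborhood $N$ of weight ${\le}k$. As a clopen subset of the closed---hence absolute $G_{\delta\sigma}$---set $X_n$, the set $N$ is an absolute $G_{\delta\sigma}$-space of weight ${\le}k$ with $\mathrm{Ind}N=0$; embedding $N$ in $B^*(k)$ as a $G_{\delta\sigma}$-subset and applying Remark~\ref{r-1}, $N$ is homeomorphic to a closed subset of $A^{II}_k$, and therefore of $A^I_k = Q \times A^{II}_k$ (through $\{q\} \times A^{II}_k$). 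Thus each $X_n \in LF(A^I_k)$, so $X \in \sigma LF(A^I_k)$.

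The second condition is the heart of the proof. Fix a nonempty clopen $V \subseteq X$. By weight-homogeneity $w(V) = \tau > k$, so any subset of $V$ of weight ${\le}k$ is automatically nowhere dense in $V$; it therefore suffices to locate a weight-${\le}k$ \ndc subset $N$ of $V$ to which Corollary~\ref{c-ea2} applies with $Y = A^I_k$, for then $N$ contains a \ndc copy $F \approx A^I_k$, and $F$ is \ndc in $V$. The hypothesis of Corollary~\ref{c-ea2} is that $N$, viewed in $B^*(k)$, is a $G_{\delta\sigma}$-set not lying in the class $\mathscr{F}_{\sigma\delta}\setminus\sigma LW({<}k)$, i.e.\ $N$ is not of the form $P\setminus L$ with $P \in \mathscr{F}_{\sigma\delta}$ and $L \in \sigma LW({<}k)$. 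Here the hypothesis that $X$ is nowhere $\mathscr{F}_{\sigma\delta}\setminus\sigma LW({<}k)$ is used: as $V$ is open, $V$ itself is not of this form. Using $V \in \sigma LW({<}k^+)$, I would cover $V$ by a $\sigma$-discrete family of weight-${\le}k$ closed pieces (closures of the clopen neighborhoods from the previous paragraph) and appeal to a localization principle for the class $\mathscr{F}_{\sigma\delta}\setminus\sigma LW({<}k)$---the counterpart, under complementation in $B(\tau)$, of Lemma~\ref{sg}---stating that this class is preserved under $\sigma$-discrete unions of closed sets. If every piece belonged to the class, $V$ would too, contradicting the previous sentence; hence some weight-${\le}k$ closed piece $N \subseteq V$ lies outside it, as required.

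Granting both conditions, Theorem~\ref{t-msu} yields $X \approx Q(\tau) \times A^I_k$. I expect the main obstacle to be the localization step: formulating and proving, with the correct absolute-Borel bookkeeping, that $\mathscr{F}_{\sigma\delta}\setminus\sigma LW({<}k)$ is inherited by $\sigma$-discrete unions of weight-${\le}k$ closed sets. This in turn rests on a Hansell-type co-$\sigma$-discrete decomposition of $V$ into such pieces, exactly the device driving the proof of Theorem~\ref{hf}; once it is in place, the remaining verifications (that the extracted $N$ is genuinely $G_{\delta\sigma}$, of weight ${\le}k$, and not of the form $P\setminus L$, and that its copy of $A^I_k$ is nowhere dense and closed in $V$) are routine.
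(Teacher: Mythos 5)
Your overall architecture is the paper's: reduce to Theorem~\ref{t-msu} with $Y=A^I_k$, get $X\in\sigma LF(A^I_k)$ from the $\sigma LW({<}k^+)$ decomposition together with the universality of $A^{II}_k$ (Remark~\ref{r-1}), and produce a \ndc copy of $A^I_k$ in each nonempty clopen $V$ via Corollary~\ref{c-ea2}. Two points need repair. First, the claim that ``$w(V)=\tau>k$, so any subset of $V$ of weight ${\le}k$ is automatically nowhere dense'' is false under the stated hypotheses, which allow $k=\tau$. Fortunately it is also unnecessary: the copy $F\approx A^I_k$ produced by Corollary~\ref{c-ea2} is nowhere dense and closed in the piece to which the corollary is applied, and that piece is closed in $V$, so $F$ is automatically a \ndc subset of $V$; no weight comparison is needed.

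Second, and more seriously, you leave the crucial step --- if every piece of $V$ lies in $\mathscr{F}_{\sigma\delta}\setminus\sigma LW({<}k)$ then so does $V$ --- as an acknowledged obstacle, to be handled by a $\sigma$-discrete localization principle over $\tau$-many weight-${\le}k$ closed pieces, backed by a Hansell-type decomposition as in Theorem~\ref{hf}. As written this is a genuine gap, and it is also a harder route than the one the hypotheses hand you. The paper uses only the \emph{countable} decomposition $X=\bigcup\{X_i: i\in\omega\}$ with each $X_i$ closed in $X$ and locally of weight ${\le}k$, coming straight from $X\in\sigma LW({<}k^+)$: if each $V\cap X_i$ were of the form $F_i\setminus L_i$ with $F_i\in\mathscr{F}_{\sigma\delta}$ and $L_i\in\sigma LW({<}k)$, then $V=\bigcup_i(V\cap X_i)$ would itself belong to $\mathscr{F}_{\sigma\delta}\setminus\sigma LW({<}k)$ --- the $\sigma LW({<}k)$ bookkeeping is trivial, and the $\mathscr{F}_{\sigma\delta}$ part is the countable closed sum theorem for $F_{\sigma\delta}$ (the complementary counterpart of Lemma~\ref{sg}, valid here precisely because the pieces $V\cap X_i$ are closed in $V$). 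This contradicts the hypothesis that $X$ is nowhere $\mathscr{F}_{\sigma\delta}\setminus\sigma LW({<}k)$, so some $V\cap X_j$ escapes the class and Corollary~\ref{c-ea2} applies to it. No Hansell machinery and no localization over uncountably many pieces is needed; if one insists on first cutting $V\cap X_j$ down to a clopen sub-piece of weight ${\le}k$ before invoking Corollary~\ref{c-ea2}, only a discrete clopen decomposition of that single locally-small set is required, which is far lighter than what you propose.
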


\begin{proof}
Since $X \in \sigma LW({<}k^+)$, we have $X = \cup \{X_i: i \in
\omega \}$, where each $X_i$ is a closed subset of $X$ and locally
of weight ${\leq}k$. Using $\textrm{Ind} X =0$, we can assume that
$X_i \cap X_j = \emptyset$ whenever $i \neq j$.

We claim that every nonempty clopen subset $U$ of $X$ contains a
\ndc copy of $A_k^I$. If we assume that each $U \cap X_i = F_i
\setminus L_i$, where $F_i$ is an absolute $ F_{\sigma
\delta}$-set and $L_i \in \sigma LW({<}k)$, then $U \in
\mathscr{F}_{\sigma \delta } \setminus \sigma LW({<}k)$
contradicting the conditions of the theorem. Hence, there exists a
$j \in \omega$ such that $U \cap X_j$ is not $\mathscr{F}_{\sigma
\delta } \setminus \sigma LW({<}k)$. Clearly, $U \cap X_j$ is an
absolute $G_{\delta \sigma }$-set. By Corollary~\ref{c-ea2}, $U
\cap X_j$ contains a \ndc copy of $A^I_k$. Then $U$ is the same.
The claim is proved. \hfill $\lozenge$

From Corollary~\ref{c-ea2} it follows that each $X_i \in
LF(A^I_k)$. Hence, $X \in \sigma LF(A^I_k)$. Theorem~\ref{t-msu}
implies that $X \approx Q(\tau) \times A_k^I$.
\end{proof}

\begin{thm}\label{t-mk2}
Let $X$ be an absolute $F_{\sigma \delta }$-set such that
$\mathrm{Ind}X= 0$, $X$ is nowhere $\sigma LW({<}k) +
\mathscr{G}_{\delta \sigma}$, $X$ contains a dense topologically
complete subspace, and $w(X)=k$. Then $X \approx M^{II}_k$.
\end{thm}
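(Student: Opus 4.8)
The plan is to establish Theorem~\ref{t-mk2} as the exact dual of Theorem~\ref{t-aR}: there the complement of $X$ was identified with $Q(k)^\omega = M^I_k$, whereas here I will identify the complement with $A^I_k$ and then transport $X$ onto $M^{II}_k$ by dense homogeneity. For $k = \omega$ (so $B^*(\omega) = \mathscr{C}$) the statement is due to van Engelen~\cite{En-tQ}, so assume $k > \omega$. First I would embed $X$ densely in $B^*(k)$; this is legitimate because $X$ is strongly zero-dimensional of weight $k$ and, being nowhere $\sigma LW({<}k) + \mathscr{G}_{\delta \sigma}$, is weight-homogeneous with no isolated points, exactly as in the proof of Theorem~\ref{t-aR}. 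Set $C = B^*(k) \setminus X$. The aim is to verify that $C$ meets all the hypotheses of Theorem~\ref{t-ak1}, whence $C \approx A^I_k$.

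The elementary hypotheses I would check as follows. Since $X$ is an absolute $F_{\sigma \delta}$-set embedded in the completely metrizable space $B^*(k)$, it is $F_{\sigma \delta}$ there, so $C$ is a $G_{\delta \sigma}$-subset of $B^*(k)$ and hence an absolute $G_{\delta \sigma}$-set; moreover $\mathrm{Ind}\,C = 0$. Because $X$ contains a dense topologically complete subspace $E$, and $E$ is then a dense $G_\delta$-subset of $B^*(k)$ (a completely metrizable subspace of a completely metrizable space is a $G_\delta$-set), the set $C \subseteq B^*(k) \setminus E$ is of first category. Next, $X$ has empty interior: a nonempty open $U \subseteq X$ would be an absolute $G_\delta$-set, hence a nonempty open subset of $X$ lying in $\mathscr{G}_{\delta \sigma} \subseteq \sigma LW({<}k) + \mathscr{G}_{\delta \sigma}$, contradicting the hypothesis on $X$. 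Thus $C$ is dense in $B^*(k)$, and since weight coincides with density character for metric spaces, $w(C) = w(B^*(k)) = k$.

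The crux is to show that $C$ is nowhere $\mathscr{F}_{\sigma \delta} \setminus \sigma LW({<}k)$, and this is precisely the De Morgan dual of the hypothesis that $X$ is nowhere $\sigma LW({<}k) + \mathscr{G}_{\delta \sigma}$. Suppose not: then for some nonempty clopen $W \subseteq B^*(k)$ one has $W \setminus X = F \setminus L$ with $F \in \mathscr{F}_{\sigma \delta}(B^*(k))$ and $L \in \sigma LW({<}k)$. Taking complements inside $W$ gives $W \cap X = (W \setminus F) \cup (W \cap L)$, where $W \setminus F \in \mathscr{G}_{\delta \sigma}$ and $W \cap L \in \sigma LW({<}k)$. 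Hence the nonempty relatively open set $W \cap X$ lies in $\sigma LW({<}k) + \mathscr{G}_{\delta \sigma}$, contradicting that $X$ is nowhere such. This is the same duality computation appearing, in the opposite direction, in the proof of Theorem~\ref{t-aR}; I expect this step, together with the density and first-category verifications above, to carry the whole argument.

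With all hypotheses in hand, Theorem~\ref{t-ak1} yields $C = B^*(k) \setminus X \approx A^I_k$. By Corollary~\ref{c-xhq} the space $A^I_k$ is $h$-homogeneous of first category and of weight $k$, so $Q(k) \times A^I_k \approx A^I_k$; consequently Theorem~\ref{t-ema} with $\tau = k$ shows that $B^*(k) = B(k)$ is densely homogeneous with respect to $A^I_k$. Now $X$ and $M^{II}_k = B^*(k) \setminus Y$, where $Y$ is a dense copy of $A^I_k$ by Definition~\ref{d-1}, are both complements of dense copies of $A^I_k$ in $B^*(k)$. Hence there is a homeomorphism $h: B^*(k) \rightarrow B^*(k)$ with $h(C) = Y$, and then $h(X) = B^*(k) \setminus Y = M^{II}_k$. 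Therefore $X \approx M^{II}_k$.
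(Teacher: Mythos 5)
Your proof is correct and follows essentially the same route as the paper's: identify $B^*(k)\setminus X$ as a dense first-category copy of $A^I_k$ via Theorem~\ref{t-ak1} (using the De Morgan duality with the hypothesis on $X$), then transfer $X$ onto $M^{II}_k$ by the dense homogeneity of $B^*(k)$ with respect to $Q(k)\times A^I_k\approx A^I_k$ from Theorem~\ref{t-ema} and Corollary~\ref{c-xhq}. You merely spell out the verifications (density, first category, the complementation computation) that the paper compresses into ``as above.''
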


\begin{proof}
In the case $k=\omega$ the theorem was proved by van Engelen (see
\cite[Theorem 4.4]{EngQ}).

Let $k > \omega$. By Definition~\ref{d-1}, $M^{II}_k = B(k)
\setminus Y$, where $Y$ is a dense copy of $A^I_k$ in $B(k)$.
Embed $X$ densely in $B(k)$. As above, we obtain that $B(k)
\setminus X$ is nowhere $ \mathscr{F}_{\sigma \delta } \setminus
\sigma LW({<}k)$. Clearly, $B(k) \setminus X$ is of first category
and dense in $B(k)$. From Theorem~\ref{t-ak1} it follows that
$B(k) \setminus X \approx A_k^I$. According to Theorem~\ref{t-ema}
with $k = \tau$, the Baire space $B(k)$ is densely homogeneous
with respect to the space $Q(k) \times A^I_k$. By
Corollary~\ref{c-xhq}, $Q(k) \times A^I_k \approx A^I_k$. Then
there exists a homeomorphism $f:B(k) \rightarrow B(k)$ such that
$f(B(k) \setminus X) = B(k) \setminus Y$. Hence, $f(X) =M^{II}_k$.
\end{proof}

\begin{cor}
$M^{II}_k$ and $Q(\tau) \times M^{II}_k$ are $h$-homogeneous
spaces if $\omega \leq k \leq \tau$.
\end{cor}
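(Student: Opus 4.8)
The plan is to derive $h$-homogeneity of $M^{II}_k$ from the internal characterization in Theorem~\ref{t-mk2}, in the same spirit in which $h$-homogeneity of $A^{II}_k$ was read off from Theorem~\ref{t-aR} in Corollary~\ref{c-ha12t}. The point is that every one of the five hypotheses of Theorem~\ref{t-mk2} is inherited by an arbitrary nonempty clopen subspace, so that any such subspace is again homeomorphic to $M^{II}_k$. The product statement will then follow from the product lemma for $h$-homogeneous spaces.

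First I would record that $M^{II}_k$ itself satisfies the hypotheses of Theorem~\ref{t-mk2}: by Definition~\ref{d-1} it is an absolute $F_{\sigma\delta}$-set, it contains a dense topologically complete subspace, and it is a weight-homogeneous space of weight $k$; moreover $\mathrm{Ind}\,M^{II}_k = 0$ since all spaces under discussion are strongly zero-dimensional, and $M^{II}_k$ is nowhere $\sigma LW({<}k) + \mathscr{G}_{\delta\sigma}$ (this is precisely the property that made Theorem~\ref{t-mk2} applicable to it, via the construction $M^{II}_k = B^*(k)\setminus Y$ with $Y\approx A^I_k$ and the duality used in the proof of Theorem~\ref{t-aR}). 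Now fix a nonempty clopen $V\subseteq M^{II}_k$ and check that $V$ again meets these conditions. Being clopen, hence closed, in an absolute $F_{\sigma\delta}$-set, $V$ is itself an absolute $F_{\sigma\delta}$-set, and clearly $\mathrm{Ind}\,V = 0$. Every nonempty open subset of $V$ is open in $M^{II}_k$, so $V$ inherits the property of being nowhere $\sigma LW({<}k) + \mathscr{G}_{\delta\sigma}$. Since $M^{II}_k$ is weight-homogeneous of weight $k$, we have $w(V) = k$.

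The one clause that deserves a word is the hereditariness of \emph{containing a dense topologically complete subspace}. If $D$ is a dense topologically complete subspace of $M^{II}_k$, then $D\cap V$ is dense in $V$; and because $V$ is clopen in $M^{II}_k$, the set $D\cap V$ is clopen, hence closed, in $D$, so it is again topologically complete. Thus $V$ satisfies all the hypotheses of Theorem~\ref{t-mk2}, whence $V\approx M^{II}_k$. As $V$ was an arbitrary nonempty clopen subset, $M^{II}_k$ is $h$-homogeneous.

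Finally, $Q(\tau)$ is $h$-homogeneous by Corollary~\ref{c-hQk}, so $Q(\tau)\times M^{II}_k$ is $h$-homogeneous as a product of two $h$-homogeneous spaces (see \cite[Lemma 2.3]{msu}), exactly as $A^I_k = Q\times A^{II}_k$ was handled in Corollary~\ref{c-ha12t}. Since each hypothesis of Theorem~\ref{t-mk2} passes verbatim to clopen pieces, no genuine obstacle arises; the only step demanding attention is the topological-completeness clause treated above, where one must observe that a clopen, and therefore closed, trace of a topologically complete dense subspace remains topologically complete.
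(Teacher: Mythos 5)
Your argument is correct and is essentially the paper's own proof: the paper derives $h$-homogeneity of $M^{II}_k$ directly from the characterization in Theorem~\ref{t-mk2} (your write-up merely makes explicit the routine check that all hypotheses pass to nonempty clopen subsets, including the dense topologically complete subspace), and handles $Q(\tau)\times M^{II}_k$ via the same product lemma \cite[Lemma 2.3]{msu}.
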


\begin{proof}
$h$-Homogeneity of $M^{II}_k$ follows from Theorem~\ref{t-mk2}.
The space $Q(\tau) \times M^{II}_k$ is $h$-homogeneous as the
product of two $h$-homogeneous spaces (see \cite[Lemma 2.3]{msu}).
\end{proof}

\begin{cor}
\rm{1)} $Q \times M^{II}_k$ is homeomorphic to $Q(k)^\omega$.

\rm{2)} $Q(\tau) \times M^{II}_k$ is homeomorphic to $Q(\tau)
\times Q(k)^\omega$ if $\omega \leq k \leq \tau$.
\end{cor}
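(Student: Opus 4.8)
The plan is to establish part 1) by verifying that $Q \times M^{II}_k$ meets every hypothesis of Theorem~\ref{t-qR}, so that $Q \times M^{II}_k \approx Q(k)^\omega$; part 2) will then follow formally from part 1) together with the identity $Q(\tau) \approx Q(\tau) \times Q$.

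First I would dispose of the routine hypotheses for $X = Q \times M^{II}_k$. The space $Q$ is a countable metric space, hence an absolute $F_\sigma$-set, of first category in itself, strongly zero-dimensional, and of weight $\omega$; by Definition~\ref{d-1}, $M^{II}_k$ is an absolute $F_{\sigma\delta}$-set with $\mathrm{Ind}\,M^{II}_k = 0$ and $w(M^{II}_k) = k$. Therefore the product is strongly zero-dimensional, of weight $\max\{\omega, k\} = k$, of first category (any product having a first-category factor is of first category), and an absolute $F_{\sigma\delta}$-set (the absolute multiplicative class $\mathscr{F}_{\sigma\delta}$ is closed under finite products, and $Q \in \mathscr{F}_\sigma \subseteq \mathscr{F}_{\sigma\delta}$).

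The one substantial point --- and the step I expect to be the main obstacle --- is to show that $Q \times M^{II}_k$ is nowhere $\sigma LW({<}k) + \mathscr{G}_{\delta \sigma}$. The key preliminary observation is that the property $\sigma LW({<}k) + \mathscr{G}_{\delta \sigma}$ is hereditary with respect to closed subsets: a subspace of a $\sigma LW({<}k)$-space is again $\sigma LW({<}k)$, while a closed subspace of an absolute $G_{\delta \sigma}$-set is an absolute $G_{\delta \sigma}$-set, because in a completion the set is $G_{\delta \sigma}$ and intersecting it with the (closed, hence $G_\delta$) subset keeps it $G_{\delta \sigma}$. Granting this, suppose toward a contradiction that some nonempty clopen $W \subseteq Q \times M^{II}_k$ were $\sigma LW({<}k) + \mathscr{G}_{\delta \sigma}$. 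Picking $(q,x) \in W$ and a basic clopen box $U_1 \times U_2 \subseteq W$, the fiber $\{q\} \times U_2$ is closed in $W$ and homeomorphic to the nonempty clopen set $U_2 \subseteq M^{II}_k$. By closed-heredity $U_2$ would then be $\sigma LW({<}k) + \mathscr{G}_{\delta \sigma}$, contradicting the fact that $M^{II}_k$ is nowhere $\sigma LW({<}k) + \mathscr{G}_{\delta \sigma}$ (Theorem~\ref{t-mk2}). As $Q \times M^{II}_k$ is strongly zero-dimensional, it suffices to test clopen subsets, so the desired nowhere-property holds and Theorem~\ref{t-qR} gives $Q \times M^{II}_k \approx Q(k)^\omega$.

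Finally, for part 2) I would invoke $Q(\tau) \approx Q(\tau) \times Q$ (as used in Corollary~\ref{c-ha12t}) and compute
\[
Q(\tau) \times M^{II}_k \approx Q(\tau) \times Q \times M^{II}_k \approx Q(\tau) \times \bigl(Q \times M^{II}_k\bigr) \approx Q(\tau) \times Q(k)^\omega,
\]
the last step being part 1). This completes the argument.
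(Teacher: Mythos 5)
Your proof is correct and takes essentially the same route as the paper, whose own argument for part 1) is just the citation of Theorems~\ref{t-mk2} and~\ref{t-qR} followed by the observation that part 2) reduces to part 1) via $Q(\tau)\approx Q(\tau)\times Q$. You have simply made explicit the verification the paper leaves implicit, namely that $Q\times M^{II}_k$ satisfies the hypotheses of Theorem~\ref{t-qR}, with the only non-routine point (the nowhere $\sigma LW({<}k)+\mathscr{G}_{\delta\sigma}$ condition) handled correctly through the closed-heredity of that property and the fiber argument.
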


\begin{proof}
The first part of the corollary follows from Theorems~\ref{t-mk2}
and~\ref{t-qR}. The second part follows from the first part.
\end{proof}

\begin{thm}\label{t-qtk}
Let $\omega \leq k \leq \tau$. Let $X$ be an absolute
$\mathscr{F}_{\sigma \delta }$-set of first category such that $X$
is nowhere $\sigma LW({<}k) +G_{\delta \sigma}$, $X \in \sigma
LW({<}k^+)$, and $X \in \mathscr{E}_\tau$. Then $X \approx Q(\tau)
\times M_k^I$.
\end{thm}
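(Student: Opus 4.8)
The plan is to recognize Theorem~\ref{t-qtk} as the exact $F_{\sigma\delta}$-counterpart of Theorem~\ref{t-atk} and to deduce it from Theorem~\ref{t-msu} applied with $Y = M^I_k = Q(k)^\omega$. Before starting I would record that $M^I_k$ is $h$-homogeneous: every nonempty clopen subset of $Q(k)^\omega$ is again an absolute $F_{\sigma\delta}$-set of first category with $\mathrm{Ind}=0$, weight $k$, and nowhere $\sigma LW({<}k) + \mathscr{G}_{\delta\sigma}$ (all inherited from $Q(k)^\omega$), hence homeomorphic to $Q(k)^\omega$ by Theorem~\ref{t-qR}. Then, using $X \in \sigma LW({<}k^+)$ and $\mathrm{Ind}X = 0$, I would write $X = \cup\{X_i : i \in \omega\}$ with the $X_i$ pairwise disjoint, closed in $X$, and locally of weight $\leq k$, exactly as in the proof of Theorem~\ref{t-atk}.

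To apply Theorem~\ref{t-msu} I must verify its two structural hypotheses for $Y = M^I_k$. For $X \in \sigma LF(M^I_k)$: each point of $X_i$ has a clopen neighbourhood $W$ of weight $\leq k$; such a $W$ is an absolute $F_{\sigma\delta}$-set of weight $\leq k$, so by Corollary~\ref{c-hko} it is homeomorphic to a closed subset of $Q(k)^\omega = M^I_k$. Thus $X_i \in LF(M^I_k)$, and since $X_i$ is closed in $X$, we get $X \in \sigma LF(M^I_k)$. For the clopen-copies condition, take a nonempty clopen $U \subseteq X$. As $X$ is nowhere $\sigma LW({<}k) + \mathscr{G}_{\delta\sigma}$, we have $U \notin \sigma LW({<}k) + \mathscr{G}_{\delta\sigma}$; were every $U \cap X_i$ in $\sigma LW({<}k) + \mathscr{G}_{\delta\sigma}$, then Lemma~\ref{sg}(2) would force $U = \cup_i (U \cap X_i)$ into that class, a contradiction. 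Hence some $U \cap X_j \notin \sigma LW({<}k) + \mathscr{G}_{\delta\sigma}$, and I would pass, via the contrapositive of Lemma~\ref{sg}(1), to a clopen subset $W \subseteq U \cap X_j$ of weight $\leq k$ that still fails to be $\sigma LW({<}k) + \mathscr{G}_{\delta\sigma}$. As an $F_{\sigma\delta}$-subset of $B^*(k)$ with this property, $W$ contains a \ndc copy $N$ of $Q(k)^\omega = M^I_k$ by Corollary~\ref{c-cm2}.

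The step requiring the most care is transferring this copy from $W$ back to $U$: I must check that $N$ is \ndc not merely in $W$ but in $U$. Closedness is immediate, since $W$ is clopen in the closed set $U \cap X_j$, hence closed in $U$, so $N$ is closed in $U$. For nowhere density, any nonempty $U$-open $V \subseteq N \subseteq W$ satisfies $V = V \cap W$, so $V$ is open in $W$ and contained in $N$, contradicting the nowhere density of $N$ in $W$; thus $N$ is nowhere dense in $U$. I emphasize that this argument deliberately avoids having to decide whether $X_j$ itself is nowhere dense in $X$, which can fail when $k = \tau$. With both structural hypotheses of Theorem~\ref{t-msu} verified, together with the facts that $X$ is weight-homogeneous of weight $\tau$ (i.e. $X \in \mathscr{E}_\tau$) and of first category and that $M^I_k$ is $h$-homogeneous, Theorem~\ref{t-msu} yields $X \approx Q(\tau) \times M^I_k$, as required.
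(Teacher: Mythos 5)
Your proof is correct and follows essentially the same route as the paper's: decompose $X$ using $X \in \sigma LW({<}k^+)$, verify the two hypotheses of Theorem~\ref{t-msu} for $Y = M^I_k$ via Corollaries~\ref{c-hko} and~\ref{c-cm2}, and conclude. The only difference is that you spell out two details the paper leaves implicit, namely localizing $U \cap X_j$ to a clopen piece of weight ${\leq}k$ before invoking Corollary~\ref{c-cm2}, and transferring the nowhere dense closed copy from that piece back to $U$.
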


\begin{proof}
Since $X \in \sigma LW({<}k^+)$, we have $X = \cup \{X_i: i \in
\omega \}$, where each $X_i$ is a closed subset of $X$ and locally
of weight ${\leq}k$.

We claim that every nonempty clopen subset $U$ of $X$ contains a
\ndc copy of $M_k^I$. If we assume that each $U \cap X_i = G_i
\cup L_i$, where $G_i$ is an absolute $ G_{\delta \sigma }$-set
and $L_i \in \sigma LW({<}k)$, then $U \in \sigma LW({<}k) +
\mathscr{G}_{\delta \sigma}$ contradicting the conditions of the
theorem. Hence, there exists a $j \in \omega$ such that $U \cap
X_j$ is not $\sigma LW({<}k) +G_{\delta \sigma}$. Clearly, $U \cap
X_j$ is an absolute $\mathscr{F}_{\sigma \delta }$-set. By
Corollary~\ref{c-cm2}, $U \cap X_j$ contains a \ndc copy of
$M^I_k$. Then $U$ is the same. The claim is proved. \hfill
$\lozenge$

From Corollary~\ref{c-hko} it follows that each $X_i \in
LF(M^I_k)$. Hence, $X \in \sigma LF(M^I_k)$. Theorem~\ref{t-msu}
implies that $X \approx Q(\tau) \times M_k^I$.
\end{proof}

\begin{thm}\label{t-dhm2}
The Baire space $B^*(k)$ is densely homogeneous with respect to
the space $M^{II}_k$.
\end{thm}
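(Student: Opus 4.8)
The plan is to follow the pattern of the proof of Theorem~\ref{t-dha2}, replacing $Q(k)^\omega$ by $A^I_k$ throughout. Recall that ``$B^*(k)$ is densely homogeneous with respect to $M^{II}_k$'' means that any two dense copies of $M^{II}_k$ in $B^*(k)$ can be carried onto one another by an autohomeomorphism of $B^*(k)$. For $k=\omega$ we have $B^*(\omega)=\mathscr{C}$ and the statement is due to van Engelen, so I assume $k>\omega$ and let $X_1$ and $X_2$ be two dense copies of $M^{II}_k$ in $B^*(k)$.

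The first step is to identify the complements. Since $M^{II}_k$ is an absolute $F_{\sigma\delta}$-set that is nowhere $\sigma LW({<}k)+\mathscr{G}_{\delta\sigma}$, contains a dense topologically complete subspace, and has weight $k$ (Definition~\ref{d-1}), it satisfies the hypotheses of Theorem~\ref{t-mk2}; hence the argument in that proof applies to each $X_i$. Concretely, each $X_i$, containing a dense completely metrizable (hence dense $G_\delta$) subspace, is comeager, so $B^*(k)\setminus X_i$ is of first category; and if $B^*(k)\setminus X_i$ failed to be dense, some clopen Baire interval $U\subseteq X_i$ would be an absolute $G_\delta$, contradicting that $X_i$ is nowhere $\sigma LW({<}k)+\mathscr{G}_{\delta\sigma}$. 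The dual argument then shows $B^*(k)\setminus X_i$ is nowhere $\mathscr{F}_{\sigma\delta}\setminus\sigma LW({<}k)$: if some clopen $U$ gave $U\setminus X_i=F\setminus L$ with $F\in\mathscr{F}_{\sigma\delta}$ and $L\in\sigma LW({<}k)$, then $U\cap X_i=(U\setminus F)\cup(U\cap L)\in\sigma LW({<}k)+\mathscr{G}_{\delta\sigma}$, a contradiction. As a dense first-category absolute $G_{\delta\sigma}$-set of weight $k$ with this nowhere-property, $B^*(k)\setminus X_i$ meets the hypotheses of Theorem~\ref{t-ak1}, whence $B^*(k)\setminus X_1\approx B^*(k)\setminus X_2\approx A^I_k$.

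The second step is to transport these complements. By Theorem~\ref{t-ema} with $\tau=k$, the space $B(k)$ is densely homogeneous with respect to $Q(k)\times A^I_k$, and by Corollary~\ref{c-xhq} (applied to the $h$-homogeneous first-category space $A^I_k$ of weight $k$) we have $Q(k)\times A^I_k\approx A^I_k$. Thus $B^*(k)$ is densely homogeneous with respect to $A^I_k$. Since $B^*(k)\setminus X_1$ and $B^*(k)\setminus X_2$ are dense copies of $A^I_k$, there is a homeomorphism $f:B^*(k)\rightarrow B^*(k)$ with $f(B^*(k)\setminus X_1)=B^*(k)\setminus X_2$; consequently $f(X_1)=X_2$, as required.

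I expect the main obstacle to be the first step: one must check that the computation of the complement in Theorem~\ref{t-mk2} is genuinely embedding-independent, so that it yields $B^*(k)\setminus X_i\approx A^I_k$ for \emph{every} dense copy $X_i$ (not just the particular one used to define $M^{II}_k$), and that every hypothesis of Theorem~\ref{t-ak1}—in particular the nowhere $\mathscr{F}_{\sigma\delta}\setminus\sigma LW({<}k)$ property obtained by the dual argument—is verified. Once both complements are known to be copies of $A^I_k$, the second step is a routine appeal to dense homogeneity.
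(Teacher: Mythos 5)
Your proposal is correct and follows essentially the same route as the paper: identify both complements as copies of $A^I_k$ via the argument of Theorem~\ref{t-mk2} (i.e.\ Theorem~\ref{t-ak1}), then invoke Theorem~\ref{t-ema} with $\tau=k$ together with $Q(k)\times A^I_k\approx A^I_k$ to move one complement onto the other. The details you supply for the first step (first category and density of the complement, and the dual computation showing it is nowhere $\mathscr{F}_{\sigma\delta}\setminus\sigma LW({<}k)$) are exactly what the paper's reference to the proof of Theorem~\ref{t-mk2} is meant to cover.
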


\begin{proof}
In the case $k=\omega$ the theorem was proved by van Engelen (see
\cite[Theorem 4.4]{EngQ}).

Let $k > \omega$. Consider two dense copies $X_1$ and $X_2$  of
$M^{II}_k$ in $B(k)$. As in the proof of Theorem~\ref{t-mk2} we
obtain that $B(k) \setminus X_1 \approx B(k) \setminus X_2 \approx
A_k^I$. By Theorem~\ref{t-ema} with $k = \tau$, the Baire space
$B(k)$ is densely homogeneous with respect to the space $Q(k)
\times A^I_k \approx A^I_k$. Hence, there exists a homeomorphism
$f: B(k) \rightarrow B(k)$ such that $f(B(k) \setminus X_1) = B(k)
\setminus X_2$. Then $f(X_1) = X_2$.
\end{proof}

The following theorem is a modification of Theorem~\ref{t-vqx}.

\begin{thm}\label{t-vqx2}
Let $X$ be an $F_{\sigma \delta}$-subset of the Baire space
$B^*(k)$ and $X \notin \sigma LW({<}k) + \mathscr{G}_{\delta
\sigma}$. Then there exists a closed subset $M \subset B^*(k)$
such that $M \cap X$ is a \ndc subset of $X$, $M \cap X \approx
M^{II}_k$, $M \setminus X$ is a \ndc subset of $B^*(k) \setminus
X$, $M \setminus X \approx A_k^I$,  and $M \approx B^*(k)$.
\end{thm}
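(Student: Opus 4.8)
The plan is to reduce to Theorem~\ref{t-vqx} by first passing to a region of $B^*(k)$ in which the complement of $X$ already looks like $A^{II}_k$, and then to extract from that complement a copy of $A_k^I$ whose closure is all of a Baire copy of $B^*(k)$; the intersection of that closure with $X$ will then be forced to be $M^{II}_k$ by the characterization Theorem~\ref{t-mk2}. Concretely, I would first apply Theorem~\ref{t-vqx} to $X$ to obtain a closed copy $M_1 \approx B^*(k)$ with $M_1 \cap X$ a \ndc subset of $X$ (hence nowhere dense in $X$) and $W := M_1 \setminus X$ a \ndc subset of $B^*(k)\setminus X$ homeomorphic to $A^{II}_k$. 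Since $M_1$ is closed in $B^*(k)$, the set $W = M_1\cap(B^*(k)\setminus X)$ is closed in $B^*(k)\setminus X$; moreover $W \approx A^{II}_k$ contains a dense topologically complete subspace, so $W$ is a Baire space no nonempty open subset of which is of first category. By Remark~\ref{r-1}, $W \approx A^{II}_k$ contains a closed copy $N$ of $A_k^I$. As $A_k^I$ is of first category, $N$ is a first-category closed subset of the Baire space $W$, hence nowhere dense in $W$; being closed in $W$ and $W$ closed in $B^*(k)\setminus X$, the set $N$ is a \ndc subset of $B^*(k)\setminus X$.

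Next I put $M = \overline{N}$ (closure taken in $B^*(k)$) and verify the required properties. Because $N \approx A_k^I \in \mathscr{E}_k$ is weight-homogeneous of weight $k$ and dense in $M$, the closure $M$ has weight $k$ and is nowhere of weight ${<}k$; as a closed strongly zero-dimensional subset of $B^*(k)$ it is homeomorphic to $B^*(k)$ by the Stone theorem. Since $N$ is closed in $B^*(k)\setminus X$, we get $M \setminus X = \overline{N}\cap(B^*(k)\setminus X) = N \approx A_k^I$, which is a \ndc subset of $B^*(k)\setminus X$ as shown above. Consequently $M\cap X = M\setminus N$, and since $M\cap X \subseteq M_1\cap X$ with $M_1\cap X$ nowhere dense and closed in $X$, the set $M\cap X$ is a \ndc subset of $X$.

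It then remains to identify $M\cap X$ with $M^{II}_k$ via Theorem~\ref{t-mk2}. The set $M\cap X = M\setminus N$ is an absolute $F_{\sigma\delta}$-set (a closed subset intersected with the $F_{\sigma\delta}$-set $X$) of weight $k$ with $\mathrm{Ind}=0$. As $N$ is a dense first-category subset of the completely metrizable, hence Baire, space $M$, its complement $M\setminus N$ is comeager and therefore contains a dense topologically complete subspace. Finally, $N \approx A_k^I$ is nowhere $\mathscr{F}_{\sigma\delta}\setminus\sigma LW({<}k)$, so running the $F_{\sigma\delta}$/$G_{\delta\sigma}$ duality argument from the proof of Theorem~\ref{t-aR} inside $M \approx B^*(k)$, with $N$ playing the role of the $G_{\delta\sigma}$-set, shows that $M\setminus N = M\cap X$ is nowhere $\sigma LW({<}k) + \mathscr{G}_{\delta\sigma}$. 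Theorem~\ref{t-mk2} now gives $M\cap X \approx M^{II}_k$, which finishes the argument.

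I expect the main obstacle to be the precise bookkeeping guaranteeing that $M\setminus X$ is \emph{exactly} $N$ and not a larger set containing stray complement points; this hinges on $N$ being closed in $B^*(k)\setminus X$ (inherited from its being closed in $W$, which is closed in $B^*(k)\setminus X$), so that $\overline{N}\cap(B^*(k)\setminus X)=N$. The other delicate point is transferring the ``nowhere $\sigma LW({<}k)+\mathscr{G}_{\delta\sigma}$'' property to $M\cap X$, which is exactly where the duality of Theorem~\ref{t-aR} must be invoked relative to the ambient Baire copy $M$ rather than $B^*(k)$ itself.
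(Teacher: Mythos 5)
Your proof is correct, but it runs the construction on the opposite side from the paper. The paper's own proof is a one\,-line reduction: rerun the proof of Theorem~\ref{t-vqx}, but inside $Y_1=M_1\cap X\approx Q(k)^\omega$ choose the nowhere dense closed copy $Y$ to be homeomorphic to $M^{II}_k$ rather than $M^I_k$ (possible since $Q(k)^\omega$ contains such a copy, by Corollary~\ref{c-hko} together with $Q(k)^\omega\approx Q(k)\times Q(k)^\omega$), set $M=\overline{Y}$, and then identify $M\setminus Y$ as $A^I_k$ via the $G_{\delta\sigma}$-characterization (Theorem~\ref{t-ak1}). You instead plant the explicit copy in the complement: you take $W=M_1\setminus X\approx A^{II}_k$, extract from it a closed copy $N$ of $A^I_k$ (Remark~\ref{r-1}), observe that $N$ is nowhere dense there because $W$ is a Baire space and $N$ is of first category, set $M=\overline{N}$, and recover $M\cap X\approx M^{II}_k$ from the $F_{\sigma\delta}$-characterization (Theorem~\ref{t-mk2}) after transferring ``nowhere $\sigma LW({<}k)+\mathscr{G}_{\delta\sigma}$'' by the duality argument of Theorem~\ref{t-aR} relative to $M$. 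The two routes are exact mirror images; yours has the mild advantage of invoking Theorem~\ref{t-vqx} as a black box instead of repeating its proof, at the cost of having to verify the two delicate points you correctly flag, namely that $\overline{N}\cap(B^*(k)\setminus X)=N$ (which holds since $N$ is closed in $W$ and $W$ is closed in $B^*(k)\setminus X$) and that the category/duality bookkeeping inside $M\approx B^*(k)$ delivers all hypotheses of Theorem~\ref{t-mk2}. Both arguments use only results established earlier in the paper, so there is no circularity.
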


\begin{proof}
To proof the theorem we repeat the proof of Theorem~\ref{t-vqx}
with the subset $Y \approx M^{II}_k $ of $Y_1$ instead of $Y
\approx M^I_k$. We can do this, because the space $Q(k)^\omega$
contains a \ndc copy of $M^{II}_k$.
\end{proof}

\begin{defin}\label{d-2}
For given cardinals $k$ and $\tau$, where $\omega \leq k \leq
\tau$, fix a dense copy $A$ of $Q(\tau) \times M^I_k$ in the Baire
space $B^*(\tau)$. Denote $B^*(\tau) \setminus A$ by
$A^{II}_{\tau, k}$. Put $M^{II}_{\tau, k} = B^*(\tau) \setminus
Y$, where $Y$ is a dense copy of $Q(\tau) \times A^I_k$ in
$B^*(\tau)$.

Notice that $A^{II}_{k, k} = A^{II}_k$ and $M^{II}_{k, k} =
M^{II}_k$.
\end{defin}

\begin{thm}\label{t-1tk}
Let $\omega < k \leq \tau$. Let $X$ be an absolute $G_{\delta
\sigma}$-set such that $X$ contains a dense topologically complete
subspace, $X$ is nowhere $\mathscr{F}_{\sigma \delta } \setminus
\sigma LW({<}k) $, $X \in \mathscr{G}_{\delta} \setminus \sigma
LW({<}k^+)$, and $X \in \mathscr{E}_\tau$. Then $X \approx
A^{II}_{\tau, k}$.
\end{thm}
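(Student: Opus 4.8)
The plan is to follow the scheme of Theorems~\ref{t-aR} and~\ref{t-mk2}. I would embed $X$ as a dense subset of $B^*(\tau)$, recognize the complement $Y = B^*(\tau)\setminus X$ as a copy of $Q(\tau)\times M^I_k$ by means of Theorem~\ref{t-qtk}, and then apply the dense homogeneity of $B^*(\tau)$ with respect to $Q(\tau)\times M^I_k$ (Theorem~\ref{t-ema}) to carry $Y$ onto the standard copy $A$ of Definition~\ref{d-2}. Since only one convenient dense embedding is needed, the first task is to arrange the embedding so that $Y$ already carries the local-weight information required by Theorem~\ref{t-qtk}.

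For this I would exploit the hypothesis $X\in\mathscr{G}_{\delta}\setminus\sigma LW({<}k^{+})$. Choose an absolute $G_\delta$-space $G\supseteq X$ with $G\setminus X\in\sigma LW({<}k^{+})$ and set $F=\overline{X}^{\,G}$. Then $F$ is closed in $G$ and hence topologically complete, $X$ is dense in $F$, $\mathrm{Ind}\,F=0$ (as $F\subseteq G$), and $F\setminus X\subseteq G\setminus X$ again belongs to $\sigma LW({<}k^{+})$, this class being hereditary. Because $X\in\mathscr{E}_\tau$ is dense in $F$, every nonempty clopen subset of $F$ has weight $\tau$; thus $F$ is weight-homogeneous of weight $\tau$ and, since $\tau>\omega$, nowhere locally compact. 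By the Stone theorem $F\approx B^*(\tau)$. Transporting $X$ into $B^*(\tau)$ along such a homeomorphism yields a dense copy of $X$ for which $Y=B^*(\tau)\setminus X=F\setminus X\in\sigma LW({<}k^{+})$. I expect this to be the crux of the argument, as it is precisely where the hypothesis $X\in\mathscr{G}_{\delta}\setminus\sigma LW({<}k^{+})$ is converted into usable local-weight data.

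It then remains to verify the hypotheses of Theorem~\ref{t-qtk} for $Y$. As the complement in the completely metrizable space $B^*(\tau)$ of the absolute $G_{\delta\sigma}$-set $X$, the set $Y$ is an absolute $F_{\sigma\delta}$-set; it is of first category because the dense topologically complete subspace of $X$ is comeagre in $B^*(\tau)$; and $Y\in\sigma LW({<}k^{+})$ by the choice of embedding. Moreover $X$ has empty interior, for a nonempty clopen $V\subseteq X$ would be completely metrizable (being clopen in $B^*(\tau)$) and hence a member of $\mathscr{F}_{\sigma \delta}\setminus\sigma LW({<}k)$, contradicting that $X$ is nowhere $\mathscr{F}_{\sigma \delta}\setminus\sigma LW({<}k)$; consequently $Y$ is dense in $B^*(\tau)$, which together with $X\in\mathscr{E}_\tau$ gives $Y\in\mathscr{E}_\tau$. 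For the final condition I argue dually to Theorem~\ref{t-aR}: if a clopen $V\subseteq B^*(\tau)$ gave $V\cap Y=L\cup H$ with $L\in\sigma LW({<}k)$ and $H\in\mathscr{G}_{\delta\sigma}$, then $V\cap X=(V\setminus H)\setminus L$ would be a nonempty open subset of $X$ lying in $\mathscr{F}_{\sigma \delta}\setminus\sigma LW({<}k)$, again a contradiction. Hence $Y$ is nowhere $\sigma LW({<}k)+\mathscr{G}_{\delta\sigma}$, and Theorem~\ref{t-qtk} yields $Y\approx Q(\tau)\times M^I_k$.

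Finally, $Y$ and the standard copy $A$ of Definition~\ref{d-2} are two dense copies of $Q(\tau)\times M^I_k$ in $B^*(\tau)$. By Theorem~\ref{t-ema} there is a homeomorphism $f\colon B^*(\tau)\to B^*(\tau)$ with $f(Y)=A$, so that $f(X)=B^*(\tau)\setminus A=A^{II}_{\tau,k}$, and therefore $X\approx A^{II}_{\tau,k}$.
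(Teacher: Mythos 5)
Your proposal is correct and follows essentially the same route as the paper: embed $X$ densely in a copy of $B(\tau)$ arranged so that the complement lies in $\sigma LW({<}k^{+})$, identify that complement as $Q(\tau)\times M^I_k$ via Theorem~\ref{t-qtk}, and finish with the dense homogeneity of Theorem~\ref{t-ema}. The only (cosmetic) difference is that you realize the ambient copy of $B(\tau)$ as the closure $\overline{X}$ inside an absolute $G_\delta$ witness, whereas the paper works inside a dense $G_\delta$-subset $G\approx B(\tau)$ of $B(\tau)$ with $G\setminus X\in\sigma LW({<}k^{+})$.
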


\begin{proof}
Since $X \in \mathscr{E}_\tau$, we can embed $X$ in $B(\tau)$ as a
dense $G_{\delta \sigma}$-subset. From $X \in \mathscr{G}_{\delta}
\setminus \sigma LW({<}k^+)$ it follows that $X = G \setminus L$,
where $G$ is a dense $G_\delta$-subset of $B(\tau)$ and $L \in
\sigma LW({<}k^+)$. Then $G \approx B(\tau)$. Since $X$ is nowhere
$\mathscr{F}_{\sigma \delta} \setminus \sigma LW({<}k)$, we see
that the set $Z = G \setminus X$  is nowhere $\sigma LW({<}k) +
\mathscr{G}_{\delta \sigma}$. Clearly, $Z$ is dense in $G$ and of
first category.  By virtue of Theorem~\ref{t-qtk}, $Z \approx
Q(\tau) \times M^I_k$. From Theorem~\ref{t-ema} it follows that
there exists a homeomorphism $f: G \rightarrow B(\tau)$ such that
$f(Z) = A$, where the set $A$ is given by Definition~\ref{d-2}.
Then $f(X) = A^{II}_{\tau, k}$.
\end{proof}

In such a way we can prove the following theorem.

\begin{thm}\label{t-2tk}
Let $\omega < k \leq \tau$. Let $X$ be an absolute $F_{\sigma
\delta}$-set such that $X$ contains a dense topologically complete
subspace, $X$ is nowhere $\mathscr{G}_{\delta \sigma } + \sigma
LW({<}k) $, $X \in \mathscr{G}_{\delta} \setminus \sigma
LW({<}k^+)$, and $X \in \mathscr{E}_\tau$. Then $X \approx
M^{II}_{\tau, k}$.
\end{thm}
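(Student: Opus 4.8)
The plan is to run the argument dual to the proof of Theorem~\ref{t-1tk}, interchanging the additive class $\mathscr{G}_{\delta \sigma}+\sigma LW({<}k)$ with the subtractive class $\mathscr{F}_{\sigma \delta}\setminus\sigma LW({<}k)$ and invoking Theorem~\ref{t-atk} in place of Theorem~\ref{t-qtk}. First I would use $X\in\mathscr{E}_\tau$ to embed $X$ densely in $B(\tau)$. Reading $X\in\mathscr{G}_\delta\setminus\sigma LW({<}k^+)$ as $X=A\setminus B$ with $A$ an absolute $G_\delta$-set and $B\in\sigma LW({<}k^+)$, this embedding realizes $X$ as $X=G\setminus L$, where $G$ is a dense $G_\delta$-subset of $B(\tau)$ and $L\in\sigma LW({<}k^+)$; by the Stone theorem $G\approx B(\tau)$. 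Put $Z=G\setminus X=L$. Since $X$ is $F_{\sigma\delta}$ in $B(\tau)$ and $G$ is $G_\delta$, the set $Z=G\cap(B(\tau)\setminus X)$ is a $G_{\delta\sigma}$-subset of $B(\tau)$, and $Z=L\in\sigma LW({<}k^+)$. Because $X$ contains a dense topologically complete (hence dense $G_\delta$) subspace, $X$ is comeager in $G$, so $Z$ is a meager subset of $G$; as $Z$ is also dense in $G$ (see below), it is of first category in itself and, being dense in $G\approx B(\tau)$, lies in $\mathscr{E}_\tau$.

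The one step that genuinely uses the hypotheses on $X$ is the verification that $Z$ is nowhere $\mathscr{F}_{\sigma\delta}\setminus\sigma LW({<}k)$; this is the exact dual of the corresponding assertion in Theorem~\ref{t-1tk} and relies on the same complement bookkeeping as in Theorem~\ref{t-aR}. Suppose, to the contrary, that $V\cap Z=F'\setminus L'$ for some nonempty $V$ open in $G$, where $F'$ is a concrete $F_{\sigma\delta}$-subset of $V$ and $L'\in\sigma LW({<}k)$ with $L'\subseteq F'$. Using $V\subseteq G=X\sqcup Z$ and $L'\subseteq V\cap X$, one obtains
\[
V\cap X=V\setminus Z=(V\setminus F')\cup L',
\]
where $V\setminus F'$ is a $G_{\delta\sigma}$-subset of the completely metrizable set $V$ (hence an absolute $G_{\delta\sigma}$-set) and $L'\in\sigma LW({<}k)$. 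Thus the nonempty open subset $V\cap X$ of $X$ would belong to $\mathscr{G}_{\delta\sigma}+\sigma LW({<}k)$, contradicting the hypothesis. The same reasoning applied to a putative nonempty open $V\subseteq X$ (which would itself be an absolute $G_\delta$-, hence $\mathscr{G}_{\delta\sigma}$-set) shows that $X$ has empty interior in $G$, so $Z$ is dense in $G$, as used above.

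At this point $Z$ satisfies all the hypotheses of Theorem~\ref{t-atk}: it is an absolute $G_{\delta\sigma}$-set of first category, nowhere $\mathscr{F}_{\sigma\delta}\setminus\sigma LW({<}k)$, with $Z\in\sigma LW({<}k^+)$ and $Z\in\mathscr{E}_\tau$. Hence $Z\approx Q(\tau)\times A^I_k$, so $Z$ is a dense copy of $Q(\tau)\times A^I_k$ in $G\approx B(\tau)$. By Definition~\ref{d-2}, $M^{II}_{\tau,k}=B(\tau)\setminus Y$ with $Y$ a dense copy of $Q(\tau)\times A^I_k$. Since $B(\tau)$ is densely homogeneous with respect to $Q(\tau)\times A^I_k$ (Theorem~\ref{t-ema}), I would combine a homeomorphism $G\approx B(\tau)$ with dense homogeneity to obtain a homeomorphism $f:G\rightarrow B(\tau)$ with $f(Z)=Y$; then $f(X)=f(G\setminus Z)=B(\tau)\setminus Y=M^{II}_{\tau,k}$, as required.

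The hard part will be the dual nowhere-condition step, specifically the passage from the intrinsic class to an ambient realization: one must be sure the subtractive decomposition $V\cap Z=F'\setminus L'$ can be taken with $F'$ a concrete $F_{\sigma\delta}$-subset of $V$ and $L'\subseteq V\cap X$ a concrete $\sigma LW({<}k)$-set, so that $V\setminus F'$ falls in the absolute class $\mathscr{G}_{\delta\sigma}$ and the hypothesis ``$X$ nowhere $\mathscr{G}_{\delta\sigma}+\sigma LW({<}k)$'' is contradicted in precisely the right class. This rests on the absoluteness of the $F_{\sigma\delta}$-property (Sierpinski), exactly as implicitly used in Theorems~\ref{t-aR} and~\ref{t-1tk}. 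Everything else — the decomposition $X=G\setminus L$, the first-category and weight-homogeneity of $Z$, and the final dense-homogeneity transfer — parallels Theorem~\ref{t-1tk} verbatim.
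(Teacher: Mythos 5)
Your proposal is correct and follows exactly the route the paper intends: the paper gives no separate proof of this theorem, stating only that it is proved ``in such a way'' as Theorem~\ref{t-1tk}, and your argument is precisely that dualization --- embed $X$ densely in $B(\tau)$, write $X=G\setminus L$ with $G\approx B(\tau)$, show $Z=G\setminus X$ satisfies the hypotheses of Theorem~\ref{t-atk} so that $Z\approx Q(\tau)\times A^I_k$, and finish with Theorem~\ref{t-ema} and Definition~\ref{d-2}. The complement bookkeeping you flag as the delicate step is handled the same way in the proofs of Theorems~\ref{t-aR} and~\ref{t-1tk}, so there is no gap.
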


Using Theorems~\ref{t-1tk} and~\ref{t-2tk}, we obtain

\begin{cor}
Let $\omega < k \leq \tau$. The spaces $A^{II}_{\tau, k}$ and
$M^{II}_{\tau, k}$ are $h$-homogeneous.
\end{cor}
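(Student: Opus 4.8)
The plan is to deduce $h$-homogeneity directly from the two characterization theorems, exactly as $h$-homogeneity of $A^{II}_k$ was read off from Theorem~\ref{t-aR} in Corollary~\ref{c-ha12t}. Concretely, to show that $A^{II}_{\tau,k}$ is $h$-homogeneous I would fix a nonempty clopen $V\subseteq A^{II}_{\tau,k}$ and verify that $V$ satisfies all hypotheses of Theorem~\ref{t-1tk}; that theorem then yields $V\approx A^{II}_{\tau,k}$, and since $V$ is arbitrary this is precisely $h$-homogeneity. The identical scheme with Theorem~\ref{t-2tk} handles $M^{II}_{\tau,k}$. Thus the proof reduces to two checks: (i) the model itself satisfies the hypotheses of the relevant theorem, and (ii) those hypotheses are inherited by every nonempty clopen subset.

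For the inheritance (ii), a nonempty clopen $V$ is at once closed and open in the ambient space $X$. Being an absolute $G_{\delta\sigma}$-set (resp.\ $F_{\sigma\delta}$-set) passes to the closed set $V$, since a closed subset of such a set is again of the same Borel type. Weight-homogeneity of weight $\tau$ and the condition $\mathrm{Ind}=0$ pass to the open set $V$, because every nonempty open subset of $V$ is open in $X$; the same observation shows that the properties \emph{nowhere} $\mathscr{F}_{\sigma\delta}\setminus\sigma LW({<}k)$ and \emph{nowhere} $\mathscr{G}_{\delta\sigma}+\sigma LW({<}k)$ descend to $V$. A dense topologically complete subspace $D\subseteq X$ meets $V$ in $D\cap V$, which is open in $D$, hence a $G_\delta$ in $D$ and therefore still topologically complete, and it is dense in $V$. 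Finally, writing $X=G\setminus L$ with $G$ an absolute $G_\delta$-set and $L\in\sigma LW({<}k^+)$ (the content of $X\in\mathscr{G}_\delta\setminus\sigma LW({<}k^+)$) and choosing open $O$ with $V=X\cap O$, one gets $V=(G\cap O)\setminus(L\cap O)$, where $G\cap O$ is open in $G$, hence again an absolute $G_\delta$-set, and $L\cap O\in\sigma LW({<}k^+)$ as a subset of $L$; thus $V\in\mathscr{G}_\delta\setminus\sigma LW({<}k^+)$.

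For (i), recall from Definition~\ref{d-2} that $A^{II}_{\tau,k}=B^*(\tau)\setminus A$ with $A$ a dense copy of $Q(\tau)\times M^I_k$, and $M^{II}_{\tau,k}=B^*(\tau)\setminus Y$ with $Y$ a dense copy of $Q(\tau)\times A^I_k$. Both $A$ and $Y$ are of first category (because $M^I_k$ and $A^I_k$ are), so each model is comeager in $B^*(\tau)$ and hence contains a dense $G_\delta$, furnishing the dense topologically complete subspace; membership in $\mathscr{E}_\tau$ follows because the model is dense in $B^*(\tau)$ and contains a dense $G_\delta$ of every nonempty open piece, so each nonempty relatively open set has weight $\tau$. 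Writing $Q(\tau)=\bigcup_n Q_n$ with each $Q_n$ discrete of cardinality $\tau$, each slice $Q_n\times M^I_k$ (resp.\ $Q_n\times A^I_k$) is locally of weight ${\leq}k$ (a point $q\in Q_n$ being isolated), so $A,Y\in\sigma LW({<}k^+)$; hence, taking $G=B^*(\tau)$, the models have the form $B^*(\tau)\setminus A$ and $B^*(\tau)\setminus Y$ with the subtracted set in $\sigma LW({<}k^+)$, so they lie in $\mathscr{G}_\delta\setminus\sigma LW({<}k^+)$. That $A$ (resp.\ $Y$) is an absolute $F_{\sigma\delta}$- (resp.\ $G_{\delta\sigma}$-) set gives the Borel type of the complement.

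The main obstacle is the remaining \emph{nowhere} hypothesis for the model: that $A^{II}_{\tau,k}$ is nowhere $\mathscr{F}_{\sigma\delta}\setminus\sigma LW({<}k)$ and that $M^{II}_{\tau,k}$ is nowhere $\mathscr{G}_{\delta\sigma}+\sigma LW({<}k)$. I would obtain these by the duality already exploited in the proof of Theorem~\ref{t-aR}: if a nonempty clopen piece of $A^{II}_{\tau,k}$ were $F\setminus L'$ with $F\in\mathscr{F}_{\sigma\delta}$ and $L'\in\sigma LW({<}k)$, then the corresponding piece of the complement $A=Q(\tau)\times M^I_k$ would lie in $\sigma LW({<}k)+\mathscr{G}_{\delta\sigma}$, contradicting that $Q(\tau)\times M^I_k$ is nowhere $\sigma LW({<}k)+\mathscr{G}_{\delta\sigma}$ (the property built into the characterization of Theorem~\ref{t-qtk}); the symmetric argument, using that $Q(\tau)\times A^I_k$ is nowhere $\mathscr{F}_{\sigma\delta}\setminus\sigma LW({<}k)$ via Theorem~\ref{t-atk}, handles $M^{II}_{\tau,k}$. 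Once these nowhere-properties of the models are secured, all hypotheses of Theorems~\ref{t-1tk} and~\ref{t-2tk} hold for the models and are inherited by clopen subsets, so $V\approx A^{II}_{\tau,k}$ and $V\approx M^{II}_{\tau,k}$, respectively, and $h$-homogeneity follows.
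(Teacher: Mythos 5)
Your proposal is correct and takes essentially the same route as the paper, whose proof consists of invoking Theorems~\ref{t-1tk} and~\ref{t-2tk}: one checks that every nonempty clopen subset of $A^{II}_{\tau,k}$ (resp. $M^{II}_{\tau,k}$) inherits all hypotheses of the relevant characterization theorem and is therefore homeomorphic to the whole space. Your only loose point --- deriving the models' nowhere-properties from Theorem~\ref{t-qtk} (resp. Theorem~\ref{t-atk}), which are one-directional statements and so do not literally assert that $Q(\tau)\times M^I_k$ is nowhere $\sigma LW({<}k)+\mathscr{G}_{\delta\sigma}$ --- is harmless, since that fact follows directly from the nowhere-property of $M^I_k$ (Theorem~\ref{t-qR}) via closed slices $\{q\}\times V$, and your complement-duality argument is exactly the one the paper uses in Theorem~\ref{t-aR}.
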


\begin{rem}
If $k < \tau$, then $M^{II}_{\tau, k}$ is not homeomorphic to
$M^{II}_{\tau}$ and $A^{II}_{\tau, k}$ is not homeomorphic to
$A^{II}_{\tau}$.
\end{rem}

\begin{thm}
Let $X$ be a homogeneous absolute $G_{\delta \sigma}$-set of first
category such that $X \notin \mathscr{F}_{\sigma \delta }
\setminus \sigma LW({<}k)$, $X \in \sigma LW({<}k^+)$,  and $X \in
\mathscr{E}_\tau$, where $\omega \leq k \leq \tau$. Then $X
\approx Q(\tau) \times A^I_k \approx Q(\tau) \times A^{II}_k$.
\end{thm}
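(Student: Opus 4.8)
The plan is to deduce the result from Theorem~\ref{t-atk}. Comparing hypotheses, Theorem~\ref{t-atk} asks that $X$ be an absolute $G_{\delta\sigma}$-set of first category lying in $\sigma LW({<}k^+)$ and in $\mathscr{E}_\tau$ and, crucially, that $X$ be \emph{nowhere} $\mathscr{F}_{\sigma\delta}\setminus\sigma LW({<}k)$. All of these are among our hypotheses except the last, which is here replaced by the weaker global condition $X\notin\mathscr{F}_{\sigma\delta}\setminus\sigma LW({<}k)$ together with homogeneity of $X$. So the entire content of the proof is the implication that, for a homogeneous $X$, the assumption $X\notin\mathscr{F}_{\sigma\delta}\setminus\sigma LW({<}k)$ already forces $X$ to be nowhere $\mathscr{F}_{\sigma\delta}\setminus\sigma LW({<}k)$. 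Once this is in hand, Theorem~\ref{t-atk} gives $X\approx Q(\tau)\times A^I_k$, and the remaining homeomorphism $Q(\tau)\times A^I_k\approx Q(\tau)\times A^{II}_k$ is precisely Corollary~\ref{c-ha12t}.

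Write $\mathscr{P}=\mathscr{F}_{\sigma\delta}\setminus\sigma LW({<}k)$ and argue by contraposition. Suppose $X$ is not nowhere $\mathscr{P}$, so some nonempty open $U\subseteq X$ belongs to $\mathscr{P}$, say $U=F\setminus L$ with $F$ an absolute $F_{\sigma\delta}$-set containing $U$ and $L=F\setminus U\in\sigma LW({<}k)$. First I would record that $\mathscr{P}$ passes to clopen subsets: if $W$ is clopen in $U$, then $W$ is closed in $U$, so $\mathrm{cl}_F W\cap U=W$, whence $W=\mathrm{cl}_F W\setminus(\mathrm{cl}_F W\cap L)$ with $\mathrm{cl}_F W$ a closed, hence absolute $F_{\sigma\delta}$, subset of $F$ and $\mathrm{cl}_F W\cap L\in\sigma LW({<}k)$ by heredity of $\sigma LW({<}k)$; thus $W\in\mathscr{P}$. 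Using $\mathrm{Ind}X=0$ we may therefore shrink $U$ to a clopen set. Homogeneity now enters: fixing $u\in U$, for each $x\in X$ there is a homeomorphism $h_x\colon X\to X$ with $h_x(u)=x$, and then $h_x(U)$ is a clopen neighbourhood of $x$ belonging to $\mathscr{P}$, since $\mathscr{P}$ is a topological property. Hence every point of $X$ has a clopen neighbourhood in $\mathscr{P}$; that is, $X$ is locally $\mathscr{P}$.

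The decisive step is then a local-to-global (sum) theorem for $\mathscr{P}$, the analogue for $\mathscr{F}_{\sigma\delta}\setminus\sigma LW({<}k)$ of Lemma~\ref{sg}: a strongly zero-dimensional metric space that is locally $\mathscr{P}$ is itself $\mathscr{P}$. I would prove this by embedding $X$ in $B(\tau)$ and, using $\mathrm{Ind}X=0$, refining the local neighbourhoods to a $\sigma$-discrete clopen family $\{\widehat V_j\}$ of $B(\tau)$ with $X\subseteq\bigcup_j\widehat V_j$ and each $V_j=\widehat V_j\cap X\in\mathscr{P}$ (by the clopen heredity just established). For each $j$ choose an $F_{\sigma\delta}$-subset $\widetilde F_j\subseteq\widehat V_j$ of $B(\tau)$ with $V_j\subseteq\widetilde F_j$ and $\widetilde F_j\setminus V_j\in\sigma LW({<}k)$. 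Setting $\widetilde F=\bigcup_j\widetilde F_j$ and $L^\ast=\bigcup_j(\widetilde F_j\setminus V_j)\in\sigma LW({<}k)$, one has $\widetilde F=X\cup L^\ast$, so $X=\widetilde F\setminus(L^\ast\setminus X)$ with $L^\ast\setminus X\in\sigma LW({<}k)$ by heredity; thus $X\in\mathscr{P}$, contradicting $X\notin\mathscr{P}$, and the contraposition gives that $X$ is nowhere $\mathscr{P}$. The one genuinely technical point, which I expect to be the main obstacle, is the assertion that the $\sigma$-discretely decomposable union $\widetilde F=\bigcup_j\widetilde F_j$ of $F_{\sigma\delta}$-sets sitting inside the members of the $\sigma$-discrete clopen family $\{\widehat V_j\}$ is again an absolute $F_{\sigma\delta}$-set; this is supplied by Stone's $\sigma$-discrete sum theorem for absolute multiplicative Borel classes, the additivity of $\sigma LW({<}k)$ used alongside it being the same one already invoked in Lemma~\ref{sg} (cf.~\cite{St}).
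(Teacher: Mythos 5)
Your proposal is correct and follows essentially the same route as the paper: homogeneity together with $\mathrm{Ind}X=0$ produces a pairwise disjoint clopen cover of $X$ by sets lying in $\mathscr{F}_{\sigma\delta}\setminus\sigma LW({<}k)$, whence $X$ itself would lie in that class, so $X$ is nowhere $\mathscr{F}_{\sigma\delta}\setminus\sigma LW({<}k)$ and Theorem~\ref{t-atk} plus Corollary~\ref{c-ha12t} finish the argument. The only difference is one of detail: you make explicit the clopen heredity of the class and justify the disjoint ($\sigma$-discrete) union step via Stone's sum theorem, a step the paper asserts without comment.
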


\begin{proof}
Assume that there exists a nonempty clopen subset $U \subset X$
with $U \in \mathscr{F}_{\sigma \delta } \setminus \sigma
LW({<}k_1)$ for some $k_1 \leq k$. Using homogeneity of $X$, we
can find a clopen cover $\mathscr{U}$ of $X$ consisting of sets
that are homeomorphic to $U$. Since $\mathrm{Ind}X=0$, there
exists a refinement $\mathscr{V}$ of $\mathscr{U}$ consisting of
pairwise disjoint nonempty clopen subsets. Then $$X = \cup
\mathscr{V} \in \mathscr{F}_{\sigma \delta } \setminus \sigma
LW({<}k_1) \subseteq \mathscr{F}_{\sigma \delta } \setminus \sigma
LW({<}k),$$ a contradiction with the choose of $k$. Hence, $X$ is
nowhere $\mathscr{F}_{\sigma \delta } \setminus \sigma LW({<}k)$.

From Theorem~\ref{t-atk} it follows that $X \approx Q(\tau) \times
A^I_k$. It remains to apply Corollary~\ref{c-ha12t}.
\end{proof}

In such a way we can prove the following theorem.

\begin{thm}
Let $X$ be a homogeneous absolute $\mathscr{F}_{\sigma \delta
}$-set of first category such that $X \notin \sigma LW({<}k)
+G_{\delta \sigma}$, $X \in \sigma LW({<}k^+)$, and $X \in
\mathscr{E}_\tau$, where $\omega \leq k \leq \tau$. Then $X
\approx Q(\tau) \times M_k^I \approx Q(\tau) \times M^{II}_k$.
\end{thm}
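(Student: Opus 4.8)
The plan is to mimic the proof of the preceding theorem almost verbatim, replacing the property $\mathscr{F}_{\sigma \delta} \setminus \sigma LW({<}k)$ by $\sigma LW({<}k) + \mathscr{G}_{\delta \sigma}$ and invoking Theorem~\ref{t-qtk} in place of Theorem~\ref{t-atk}. The entire task is to upgrade the hypothesis $X \notin \sigma LW({<}k) + \mathscr{G}_{\delta \sigma}$ to the statement that $X$ is \emph{nowhere} $\sigma LW({<}k) + \mathscr{G}_{\delta \sigma}$; once that is done, Theorem~\ref{t-qtk} applies directly.

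First I would argue by contradiction that $X$ is nowhere $\sigma LW({<}k) + \mathscr{G}_{\delta \sigma}$. Suppose some nonempty clopen subset $U \subseteq X$ satisfies $U \in \sigma LW({<}k_1) + \mathscr{G}_{\delta \sigma}$ for some $k_1 \leq k$. Using the homogeneity of $X$, I would take a clopen cover $\mathscr{U}$ of $X$ all of whose members are homeomorphic to $U$; since $\mathrm{Ind}X = 0$, this cover admits a refinement $\mathscr{V}$ into pairwise disjoint nonempty clopen sets, each still belonging to $\sigma LW({<}k_1) + \mathscr{G}_{\delta \sigma}$. Then every point of $X$ has a clopen neighborhood with this property, so $X$ is locally $\sigma LW({<}k_1) + \mathscr{G}_{\delta \sigma}$, and Lemma~\ref{sg}(1) gives $X \in \sigma LW({<}k_1) + \mathscr{G}_{\delta \sigma} \subseteq \sigma LW({<}k) + \mathscr{G}_{\delta \sigma}$, the inclusion holding because $\sigma LW({<}k_1) \subseteq \sigma LW({<}k)$ when $k_1 \leq k$. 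This contradicts the hypothesis, so $X$ is nowhere $\sigma LW({<}k) + \mathscr{G}_{\delta \sigma}$.

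With this in hand, the remaining hypotheses --- $X$ an absolute $\mathscr{F}_{\sigma \delta}$-set of first category, $X \in \sigma LW({<}k^+)$, and $X \in \mathscr{E}_\tau$ --- are exactly those of Theorem~\ref{t-qtk}, which yields $X \approx Q(\tau) \times M^I_k$. For the final homeomorphism I would recall that $M^I_k = Q(k)^\omega$ by Definition~\ref{d-1} and use the corollary asserting $Q(\tau) \times M^{II}_k \approx Q(\tau) \times Q(k)^\omega$; combining these gives $Q(\tau) \times M^I_k \approx Q(\tau) \times M^{II}_k$, and hence the full chain $X \approx Q(\tau) \times M^I_k \approx Q(\tau) \times M^{II}_k$.

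The main point to watch, and the only step that is not a direct citation, is the passage from ``$\notin$'' to ``nowhere'': the disjoint clopen family $\mathscr{V}$ may well be uncountable, so the countable stability of Lemma~\ref{sg}(2) does not suffice. It is precisely the local-to-global form in Lemma~\ref{sg}(1) that closes this gap, since each point lies in a member of $\mathscr{V}$ witnessing the property. Everything after that reduction is an application of the already-proved Theorem~\ref{t-qtk} together with the quoted product decomposition.
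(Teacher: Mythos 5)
Your proposal is correct and follows exactly the route the paper intends (the paper omits the proof, saying only ``in such a way,'' i.e.\ by mirroring the argument of the preceding theorem): use homogeneity plus $\mathrm{Ind}\,X=0$ to pass from $X \notin \sigma LW({<}k) + \mathscr{G}_{\delta\sigma}$ to $X$ being nowhere $\sigma LW({<}k) + \mathscr{G}_{\delta\sigma}$, then apply Theorem~\ref{t-qtk} and the corollary identifying $Q(\tau)\times M^{II}_k$ with $Q(\tau)\times Q(k)^\omega$. Your explicit appeal to Lemma~\ref{sg}(1) to handle the possibly uncountable disjoint clopen family is exactly the right justification for the local-to-global step and, if anything, is slightly more careful than the paper's template.
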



\begin{thebibliography}{HD}

\normalsize
\baselineskip=17pt

\bibitem[Ba]{Bai}
R. Baire, Sur la representation des fonctions discontinues
(deuxieme partie), Acta Math. 32 (1909) 97--176.

\bibitem[E1]{EngQ}
F. van Engelen,  A topological characterization of the countable
infinite product of rationals, Department of Mathematics,
University of Amsterdam, Report 84-12 (1984) 1--15.

\bibitem[E2]{En-tQ}
F. van Engelen,  Zero-dimensional homogeneous Borel sets of exact
class two, Department of Mathematics, University of Amsterdam,
Report 84-15 (1984) 1--12.

\bibitem[E3]{En-coun}
F. van Engelen,  Countable products of zero-dimensional absolute
$F_{\sigma \delta}$ spaces, Indag. Math. 87:4 (1984) 391--399.


\bibitem[E4]{En-B2}
F. van Engelen, Homogeneous Borel sets of ambigious class two,
Trans. Amer. Math. Soc. 290:1 (1985) 1--39.

\bibitem[E5]{FE}
F. van Engelen,  Homogeneous Borel sets, Proc. Amer. Math. Soc.
96:4 (1986) 673--682.

\bibitem[En]{Eng}
R. Engelking, General topology, PWN, Warszawa, 1977.

\bibitem[Ha]{Han}
R.\,W. Hansell, On characterizing non-separable analytic and
extended Borel sets as types of continuous images, Proc. London
Math. Soc.(3), 28 (1974) 683--699.

\bibitem[K1]{Ke34}
L.\,V. Keldysh, Sur l'homeomorphie des elements canoniques de
class 3, Math. Sb. 41:2 (1934) 203--220.

\bibitem[Ke]{Ke44}
L.\,V. Keldysh, Sur la structure des ensembles m\'{e}surable B,
Rec. Math. [Math. Sb.] N.S., 15(57):1 (1944) 71--98.

\bibitem[M1]{mq}
S.\,V. Medvedev, Topological characteristics of the space $Q(k)$
and $Q \times B(k)$, Vestn. Mosk. Univ., Ser. I, Mat. Mekh. 41:1
(1986) 47--49 (in Russian). English transl.: Mosc. Univ. Math.
Bull.  41:1 (1986) 42--45.

\bibitem[M2]{msu}
S.\,V. Medvedev, On the structure of metric $h$-homogeneous
spaces, in:  V.\,V.~Fedor\v{c}uk, A.\,V.~Archagel'skii, V.\,I.
Za\u{i}cev (Eds.), General topology. Mappings of topological
spaces, Moscow State Univ. 1986, pp. 77--98 (in Russian).

\bibitem[M3]{M-f86}
S.\, V.  Medvedev, On a problem for spaces of the first category,
Vestn. Mosk. Univ., Ser. I, Mat. Mekh. 41:2 (1986) 84--86 (in
Russian). English transl.: Mosc. Univ. Math. Bull.  41:2 (1986)
62--65.

\bibitem[M4]{Med10}
S.\,V. Medvedev, On properties of $h$-homogeneous spaces of first
category, Topol. Applic. 157:18 (2010) 2819--2828.

\bibitem[Mi]{vm}
J. van Mill, Characterization of a certain subset of the Cantor
set, Fund. Math. 118:2 (1983) 81--91.

\bibitem[O1]{os1}
A.\,V. Ostrovsky, Continuous images of the product $C \times Q$ of
the Cantor perfect set $C$ and the rational numbers $Q$, in:
Seminar on General Topology, Moskov. Gos. Univ., Moscow, 1981,
78--85 (in Russian).

\bibitem[O2]{Ostr}
A.\,V. Ostrovsky, Concerning the Keldysh question about the
structure of Borel sets, Mat. Sb. 131 (1986) 323--346 (in
Russian), English transl.: Math. USSR-Sb. 59 (1988) 317--337.

\bibitem[SR]{Sai}
J. Saint Raymond, La structure Borelienne d'Effros est-elle
standart?, Fund. Math. 100 (1978) 201--210.

\bibitem[Si]{Sier}
W. Sierpinski, Sur une definition topologique des ensembles
$F_{\sigma \delta}$, Fund. Math. 6 (1924) 24--29.

\bibitem[St]{St}
A.\,H. Stone, Non-separable Borel sets, II, Gen. Top. and Appl. 2
(1972) 249--270.

\end{thebibliography}
\end{document}